\documentclass[11pt]{amsart}
\usepackage{amsmath}
\usepackage{a4wide}
\usepackage[utf8]{inputenc}
\usepackage{amssymb}
\usepackage{amsopn}
\usepackage{epsfig}
\usepackage{amsfonts}
\usepackage{latexsym}
\usepackage{amsthm}
\usepackage{enumerate}
\usepackage[UKenglish]{babel}
\usepackage{verbatim}
\usepackage{color}
\usepackage{calrsfs}
\usepackage{pgf}
\usepackage{tikz}

\usepackage{mathrsfs}   

 \usetikzlibrary{arrows,automata}

\DeclareMathAlphabet{\pazocal}{OMS}{zplm}{m}{n}
\setlength{\parskip}{0.3\baselineskip}



\newtheorem{theorem}{Theorem}[section]
\newtheorem{lemma}[theorem]{Lemma}
\newtheorem{proposition}[theorem]{Proposition}
\newtheorem{corollary}[theorem]{Corollary}
 \newtheorem{main}{Theorem}

\theoremstyle{definition}
\newtheorem{definition}[theorem]{Definition}

\newtheorem{question}[theorem]{Question}

\theoremstyle{remark}
\newtheorem{remark}[theorem]{Remark}

\numberwithin{equation}{section}


\newcommand{\R}{\ensuremath{\mathbb{R}}}
\newcommand{\N}{\ensuremath{\mathbb{N}}}

\newcommand{\A}{\ensuremath{\mathcal{A}}}

\newcommand{\+}{ {\ensuremath{\oplus}}}
\renewcommand{\b}{ {\mathbf{b}}}
\renewcommand{\c}{ {\mathbf{c}}}
\renewcommand{\d}{ {\mathbf{d}}}

\renewcommand{\S}{\ensuremath{\pazocal{S}}}
\renewcommand{\t}{ {\mathbf{t}}}

\newcommand{\re}{\ensuremath{\Theta}}
\renewcommand{\u}{\ensuremath{\pazocal{U}}}

\newcommand{\us}{\mathbf{U}}

\newcommand{\U} {\widetilde{\pazocal{U}}}
\newcommand{\US} {\widetilde{\mathbf{U}}}

\newcommand{\su}{ {\mathbf{u}}}
\newcommand{\sv}{ {\mathbf{v}}}
\newcommand{\sw}{ {\mathbf{w}}}

\newcommand{\Om}{ {\Omega}}

\renewcommand{\O}{ \ensuremath{\pazocal{O}}}

\newcommand{\set}[1]{\left\{#1\right\}}

\newcommand{\la}{\lambda}
\newcommand{\ga}{\gamma}

\newcommand{\f}{\infty}
\newcommand{\de}{\delta}

\newcommand{\lle}{\preccurlyeq}
\newcommand{\lge}{\succcurlyeq}
\renewcommand{\a}{ \mathbf{a}}
\newcommand{\si}{\sigma}

\newcommand{\ra}{\rightarrow}

\begin{document}

\title{Critical base for the unique codings   of fat Sierpinski gasket}

\author{Derong Kong}
\address[D. Kong]{College of Mathematics and Statistics, Chongqing University, 401331, Chongqing, P.R.China}
\email{derongkong@126.com}
 
\author{Wenxia Li}
\address[W. Li]{School of Mathematical Sciences, Shanghai Key Laboratory of PMMP, East China Normal University, Shanghai 200062,
People's Republic of China}
\email{wxli@math.ecnu.edu.cn}

\date{\today}
\dedicatory{}


\subjclass[2010]{37B10, 68R15, 11A63, 28A80}

\begin{abstract}
  Given $\beta\in(1,2)$ the  fat Sierpinski gasket $\pazocal S_\beta$ is  the self-similar set in $\mathbb R^2$ generated by the iterated function system (IFS) 
  \[
  f_{\beta,d}(x)=\frac{x+d}{\beta},\quad d\in\mathcal A:=\set{(0, 0), (1,0), (0,1)}.
  \]
  Then for each point $P\in\pazocal S_\beta$ there exists a sequence $(d_i)\in\mathcal A^\mathbb N$ such that $P=\sum_{i=1}^\f d_i/\beta^i$, and the infinite sequence $(d_i)$ is called a \emph{coding} of $P$. In general, a point in $\pazocal S_\beta$ may have multiple codings since the overlap region $\pazocal O_\beta:=\bigcup_{c,d\in\mathcal A, c\ne d}f_{\beta,c}(\Delta_\beta)\cap f_{\beta,d}(\Delta_\beta)$ has non-empty interior,   where $\Delta_\beta$ is the convex hull of $\pazocal S_\beta$. In this paper we are interested in the invariant set 
  \[
  \widetilde{\pazocal U}_\beta:=\left\{\sum_{i=1}^\infty \frac{d_i}{\beta^i}\in \pazocal S_\beta: \sum_{i=1}^\infty\frac{d_{n+i}}{\beta^i}\notin\pazocal O_\beta~\forall n\ge 0\right\}.
  \]
  Then each point in $  \widetilde{\pazocal U}_\beta$ has a unique coding. We   show that  there is a transcendental number $\beta_c\approx 1.55263$ related to  the Thue-Morse  sequence, such that 
 $\widetilde{\pazocal U}_\beta$ has positive Hausdorff dimension if and only if $\beta>\beta_{c}$.  Furthermore, for $\beta=\beta_c$ the  set $\widetilde{\pazocal U}_\beta$ is uncountable but has zero Hausdorff dimension, and for $\beta<\beta_c$ the  set $\widetilde{\pazocal U}_\beta$ is at most countable. Consequently, we  also answer a conjecture of Sidorov (2007).  Our strategy  is using combinatorics on words based on the lexicographical characterization of $\widetilde{\pazocal U}_\beta$.
  \end{abstract}

\keywords{Fat Sierpinski gasket, unique coding, critical base, Thue-Morse sequence, transcendental number.}
\maketitle

\section{Introduction}\label{s1}
Given $\beta>1$,  let  $\S_\beta$ be the  Sierpinski gasket  in $\R^2$  generated by the \emph{iterated function system} (IFS)
 \[
 f_{\beta,d}(x)=\frac{x+d}{\beta},\quad d\in \A:=\set{(0, 0), (1,0), (0, 1)}.
 \]
 In other words, $\S_\beta$ is the unique non-empty compact set in $\R^2$ satisfying 
 $\S_\beta=\bigcup_{d\in\A} f_{\beta,d}(\S_\beta)$. So for each point $P\in\S_\beta$ there exists a sequence $(d_i)\in\A^\N$ such that  
  \[
 P=\lim_{n\ra\f}f_{\beta,d_1\ldots d_n}({\bf 0})=\sum_{i=1}^\f\frac{d_i}{\beta^i}=:((d_i))_\beta,
 \]
 where $f_{\beta,d_1\ldots d_n}:=f_{\beta, d_1}\circ\cdots \circ f_{\beta, d_n}$ is the composition of $f_{\beta, d_1},\ldots, f_{\beta, d_n}$, and ${\bf 0}=(0, 0)$ is the zero vector in $\R^2$.  
 The infinite sequence $(d_i)\in\A^\N$ is called a \emph{coding} of $P$ with respect to the \emph{alphabet} $\A$. Therefore, 
 the Sierpinski gasket $\S_\beta$ can be  rewritten   as 
 \[
\S_\beta=\set{\sum_{i=1}^\f\frac{d_i}{\beta^i}: d_i\in\A\textrm{ for all }i\ge 1}.
 \]
  
   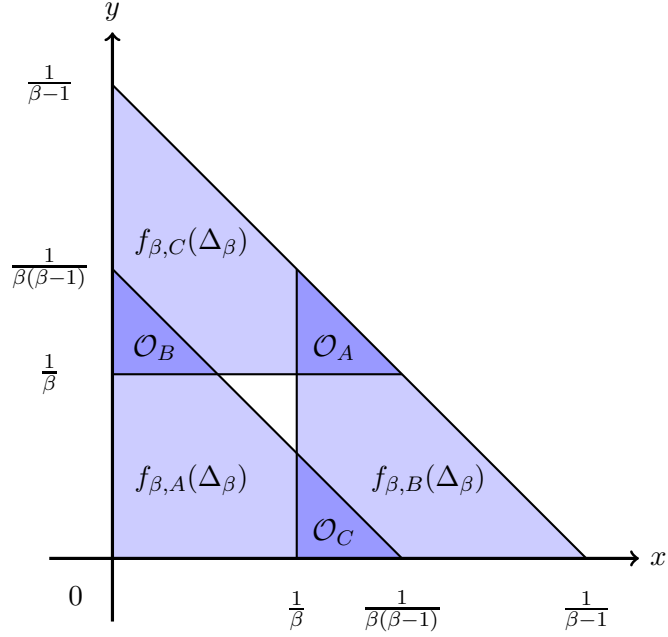
\begin{figure}[h!]
\begin{center}
\begin{tikzpicture}[
    scale=7,
    axis/.style={very thick, ->},
    important line/.style={thick},
    dashed line/.style={dashed, thin},
    pile/.style={thick, ->, >=stealth', shorten <=2pt, shorten
    >=2pt},
    every node/.style={color=black}
    ]


      \node[] at (-0.12,0.9){$\frac{1}{\beta-1}$};
         \node[] at (-0.12,0.55){$\frac{1}{\beta(\beta-1)}$};
         \node[] at (-0.12,0.35){$\frac{1}{\beta}$};
         
       \fill[blue!40](0,0.35)--(0.2,0.35)--(0,0.55)--cycle;
         \fill[blue!40](0.35,0)--(0.55,0)--(0.35,0.2)--cycle;     
      \fill[blue!40](0.35,0.35)--(0.55,0.35)--(0.35,0.55)--cycle;    
      
      \fill[blue!20](0,0)--(0.35,0)--(0.35,0.2)--(0.2,0.35)--(0,0.35)--cycle;   
      
            \fill[blue!20](0.55,0)--(0.9,0)--(0.55,0.35)--(0.35,0.35)--(0.35,0.2)--cycle;  
            
                  \fill[blue!20](0,0.55)--(0.2,0.35)--(0.35,0.35)--(0.35,0.55)--(0,0.9)--cycle;  
             \draw[axis] (-0.12,0)  -- (1.0,0) node(xline)[right]
        {$x$};
    \draw[axis] (0,-0.12) -- (0,1.0) node(yline)[above] {$y$};
     
    \node[] at (-0.07,-0.07){$0$};
             
                    \draw[important line]  (0,0.9)--(0.9,0);
    \draw[important line] (0,0.55)--(0.55,0);
    \draw[important line] (0.35, 0)--(0.35,0.55);
    
       \draw[important line] (0, 0.35)--(0.55,0.35);
       
   \node[] at (0.9, -0.1){$\frac{1}{\beta-1}$};
   
     \node[] at (0.55, -0.1){$\frac{1}{\beta(\beta-1)}$};
      \node[] at (0.35, -0.1){$\frac{1}{\beta}$};

   \node[] at (0.42, 0.05){$\O_C$};
      \node[] at (0.42, 0.4){$\O_A$};
         \node[] at (0.08, 0.4){$\O_B$};
     
        \node[] at (0.15, 0.15){$f_{\beta,A}(\Delta_\beta)$};
          \node[] at (0.15, 0.6){$f_{\beta,C}(\Delta_\beta)$};
              \node[] at (0.6, 0.15){$f_{\beta,B}(\Delta_\beta)$};
\end{tikzpicture} 
\end{center}
\caption{The figure of the first generation of $\S_\beta$ with $\beta=18/11\approx 1.63636$.   The convex hull $\Delta_\beta$ is the triangle with vertices $(0, 0), (1/(\beta-1), 0)$ and $(0, 1/(\beta-1))$. Then $f_{\beta, A}(\Delta_\beta)$ is the left-bottom triangle, $f_{\beta, B}(\Delta_\beta)$ is the right-bottom triangle, and $f_{\beta, C}(\Delta_\beta)$ is the top triangle. The overlap region is  $\O_\beta=\O_A\cup\O_B\cup\O_C$.}\label{Fig:5}
\end{figure}
  
 When $\beta>2$, it is easy to check that the \emph{overlap region}    (see Figure \ref{Fig:5})
\[ 
\O_\beta:=\bigcup_{c,d\in\A, c\ne d}f_{\beta, c}(\Delta_\beta)\cap f_{\beta, d}(\Delta_\beta) 
\]
is empty, where $\Delta_\beta$ is the convex hull of $\S_\beta$. In fact $\Delta_\beta$ is the triangle with vertices $(0, 0), (1/(\beta-1), 0)$ and $(0, 1/(\beta-1))$. So
 for $\beta> 2$ the IFS $\set{f_{\beta, d}(\cdot): d\in\A}$ satisfies the strong separation condition, and then  each point in $\S_\beta$ has a unique coding (cf.~\cite{Falconer_1990}).  When $\beta=2$, one can see that the overlap region $\O_\beta$ consists of three points. Then the IFS $\set{f_{\beta, d}(\cdot): d\in\A}$ fails the strong separation condition, but   still satisfies the open set condition.   In this case, excluding countably many points in $\S_\beta$ having precisely two codings all other points in $\S_\beta$ have a unique coding.  
 
 However, when $\beta\in(1,2)$ the overlap region $\O_\beta$ is non-trivial and it contains interior points. In this case we call $\S_\beta$ a \emph{fat} Sierpinski gasket, and the IFS $\set{f_{\beta, d}(\cdot): d\in\A}$ fails the open set condition (see \cite[Remark 2.3]{Sidorov_2007}). Furthermore, the set of points in $\S_\beta$ with multiple  codings has positive Hausdorff dimension.  In particular, for $\beta\in(1, 3/2]$ the Sierpinski gasket $\S_\beta$ coincides with its convex hull $\Delta_\beta.
$
In this case Lebesgue almost every point in $\S_\beta$ has a continuum of codings (see \cite[Theorem 3.5]{Sidorov_2007}).
When $\beta\in(3/2, 2)$ the structure of  $\S_\beta$ gets  more complicated. Broomhead, Montaldi and Sidorov showed in  \cite{Bro-Mon-Sid-04} that for  $\beta\le \beta_*$ the set $\S_\beta$ has non-empty interior, where $\beta_*\approx 1.54369$ is the appropriate zero of $x^3-2x^2+2x-2$. Some recent progress in this direction can be found in \cite{Hasselblatt-Plante-2014}.  

When $\beta\in(1,2)$ the fat Sierpinski gasket $\S_\beta$ has attracted much more attention in the past twenty years. Simon and Solomyak \cite{Simon-Solomyak-02} showed that there exists a dense set of $\beta\in(1,2)$ such that $\dim_H\S_\beta<\log 3/\log\beta$, where $\log 3/\log \beta$ is the self-similar dimension of $\S_\beta$. We emphasize that $\dim_H\S_\beta=\log 3/\log\beta$ for all $\beta\ge 2$. On the other hand, Jordan \cite{Jordan-05} showed that $\dim_H\S_\beta=\log 3/\log\beta$ for Lebesgue almost every $\beta\in(3/4^{1/3}, 2)\approx(1.88988, 2)$. Furthermore, he and Pollicott \cite{Jordan-Pollicott-06} proved that $\S_\beta$ has positive Lebesgue measure for   almost every $\beta\in(1,1.70853)$.

Let $\beta\in(1,2)$.  In this paper we are interested in an invariant subset of the fat Sierpinski gasket $\S_\beta$ consisting of all points  whose orbits never enter  the overlap region $\O_\beta$. More precisely, we will investigate the \emph{intrinsic univoque set}
\[
\U_\beta:=\set{\sum_{i=1}^\f\frac{d_i}{\beta^i}\in \S_\beta: ~\sum_{i=1}^\f\frac{d_{n+i}}{\beta^i}\,\notin\, \O_\beta\quad\textrm{for all }n\ge 0}.
\]
Observe that if a point $\sum_{i=1}^\f d_i/\beta^i\in \S_\beta$ has multiple codings, then its orbit $\sum_{i=1}^\f d_{n+i}/\beta^i$ must enter the overlap region $\O_\beta$ for some $n\ge 0$. This implies that each point in $\U_\beta$ has a unique coding. Denote the \emph{univoque set} by
\[
\u_\beta:=\set{P\in \S_\beta: P\textrm{ has a unique coding with alphabet } \A}.
\]
Then $\U_\beta\subseteq\u_\beta$ for each $\beta\in(1,2)$. When $\beta\in(1,3/2]$ or $\beta$ is a multinacci number, the equality $\U_\beta=\u_\beta$ holds  (see Proposition \ref{lem:22} below).
 This  explains why  we call $\U_\beta$  the  ``intrinsic univoque set''. 
 
 Let $\beta_G\approx 1.46557$ be the unique root in $(1,2)$ of the equation $x^3-x^2-1=0$. The following result was proved by Sidorov \cite[Theorem 4.1]{Sidorov_2007}.
 \begin{theorem}[Sidorov, 2007]\label{th:sidorov}
 If $\beta\in(1,\beta_G]$, then  
 \[\u_\beta =\set{(0, 0), \left(\frac{1}{\beta-1}, 0\right), \left(0, \frac{1}{\beta-1}\right)}.\] 
 \end{theorem}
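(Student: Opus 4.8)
The plan is to move to the symbolic picture and isolate a single ``forbidden block'' whose admissibility threshold is exactly $\beta_G$. Since $\beta_G<3/2$ one has $\S_\beta=\Delta_\beta$ and, by Proposition~\ref{lem:22}, $\u_\beta=\U_\beta$, so $P$ is univoque exactly when the orbit of its unique coding $(d_i)$ never meets $\O_\beta$; write $\sigma^nP:=\sum_{i=1}^{\infty}d_{n+i}\beta^{-i}=(x_n,y_n)$ and $L:=1/(\beta-1)$. First I would record, from the explicit shape of the three first-level triangles, that for a point of $\Delta_\beta$ lying in $f_{\beta,A}(\Delta_\beta)$, $f_{\beta,B}(\Delta_\beta)$, $f_{\beta,C}(\Delta_\beta)$ amounts respectively to $x_n+y_n\le L/\beta$, to $x_n\ge1/\beta$, and to $y_n\ge1/\beta$; hence for univoque $P$ exactly one of these three holds at each step, it pins down $d_{n+1}$, and in particular $d_{n+1}\ne A$ forces $x_n+y_n>L/\beta$ while $d_{n+1}=B$ (resp.\ $C$) forces $x_n\ge1/\beta$ and $y_n<1/\beta$ (resp.\ the mirror condition). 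The three vertices $(0,0),(L,0),(0,L)$ visibly carry the unique codings $A^\infty,B^\infty,C^\infty$, so the task is the converse.

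The heart of the proof, which I would establish next, is: \emph{for $\beta\le\beta_G$ no univoque coding contains a block $YA^kX$ with $Y,X\in\{B,C\}$ and $k\ge1$.} The idea is to suppose such a block starts at position $n+1$, to reduce to $Y=B$ by the symmetry $B\leftrightarrow C$, $x\leftrightarrow y$, then iterate $x_{n+1+i}=\beta^{i+1}x_n-\beta^{i}$ and $y_{n+1+i}=\beta^{i+1}y_n$ across the $k$ interior $A$'s and impose the conditions above at those $A$-positions and at the terminal position. This should give $x_n<\beta^{-(k+1)}+\beta^{-1}$ and $y_n<\beta^{-(k+1)}$ when the terminal letter is $B$, and the same two bounds with $x$ and $y$ interchanged when it is $C$; in either case $x_n+y_n<\beta^{-1}+\beta^{-(k+1)}+\beta^{-(k+2)}$. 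On the other hand $d_{n+1}=B$ forces $x_n+y_n>L/\beta$, so $L<1+\beta^{-k}+\beta^{-(k+1)}\le1+\beta^{-1}+\beta^{-2}$; but $L=1/(\beta-1)\ge1+\beta^{-1}+\beta^{-2}$ is precisely $\beta^3-\beta^2-1\le0$, i.e.\ $\beta\le\beta_G$ --- a contradiction. This is exactly where the cubic $x^3-x^2-1$, hence the value $\beta_G$, comes in.

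Granting the block statement, I would finish by a structural collapse of univoque codings. If $(d_i)$ is eventually $A^\infty$ but not $A^\infty$, then with $d_N$ the last non-$A$ letter one gets $\sigma^{N-1}P\in\{(1/\beta,0),(0,1/\beta)\}\subseteq\O_\beta$, impossible; so a univoque coding is either $A^\infty$ (giving $P=(0,0)$) or has infinitely many non-$A$ letters. In the latter case the block statement forbids any $A$ between two non-$A$ letters, so $(d_i)=A^m w$ with $m\ge0$ and $w\in\{B,C\}^{\mathbb N}$. Then $\sigma^m P$ has coding $w$, lies on the hypotenuse $x+y=L$, and is itself univoque, and on that segment the dynamics is conjugate to the one-dimensional uniqueness problem for $\{0,1\}$-expansions in base $\beta$; hence by the classical Glendinning--Sidorov fact that for $\beta$ below the golden ratio --- in particular for $\beta\le\beta_G$ --- only $0$ and $L$ have a unique $\{0,1\}$-expansion (cf.\ \cite{Sidorov_2007}), we get $w\in\{B^\infty,C^\infty\}$, i.e.\ $\sigma^m P\in\{(L,0),(0,L)\}$. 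Finally $m\ge1$ would put $\sigma^{m-1}P$ at $(L/\beta,0)$ or $(0,L/\beta)\in\O_\beta$, so $m=0$ and $P$ is a vertex. Together with the easy fact that the three vertices are univoque, this yields $\u_\beta=\{(0,0),(L,0),(0,L)\}$.

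The main obstacle I anticipate is the block estimate in the second paragraph: it must be carried out carefully enough to produce the sharp threshold $\beta_G$ for all $k\ge1$ at once --- a crude execution (not retaining the tighter of the two terminal constraints) only yields the weaker bound $\beta\le\sqrt2$, which would leave a gap. Everything after that --- the reduction to codings of the form $A^\infty$ or $A^m B^\infty$ or $A^m C^\infty$ and hence to vertices --- is straightforward bookkeeping once the block statement is in hand.
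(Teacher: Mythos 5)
Your proof is correct, but it follows a genuinely different route from the paper's. The paper works entirely on the symbolic side: it first establishes the lexicographical characterization of $\US_\beta$ (Proposition \ref{prop:21}), then shows in Lemma \ref{lem:31} that three consecutive distinct letters would force the periodic tail $(BAC)^\f$, which violates the characterization at $\beta_G$, and finally kills the remaining patterns ($ABAA$, $ABAB$, $ABB$) by short projection arguments. You instead argue dynamically on the triangle: your key lemma forbids any block $YA^kX$ with $Y,X\in\set{B,C}$, $k\ge 1$, via the explicit affine iteration of $\sigma^nP$, and the threshold $\beta^3-\beta^2-1=0$ drops out of the inequality $1/(\beta-1)\ge 1+\beta^{-1}+\beta^{-2}$; you then collapse the surviving codings to $A^\f$ or $A^m w$ with $w\in\set{B,C}^{\N}$ and dispose of the latter by the (correct) observation that the induced dynamics on the hypotenuse $x+y=1/(\beta-1)$ is conjugate to one-dimensional $\set{0,1}$-expansions, where the classical result for $\beta$ below the golden ratio applies. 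This is much closer in spirit to Sidorov's original dynamical proof than to the paper's combinatorial one; the paper's approach has the advantage that its machinery (Proposition \ref{prop:21}, Lemma \ref{l42}) is reused throughout Sections \ref{sec:S=D}--\ref{s6}, whereas yours makes the provenance of the cubic transparent and outsources part of the work to a known one-dimensional fact. One caveat: the pair of individual bounds you state for the terminal-$B$ case ($x_n<\beta^{-1}+\beta^{-(k+1)}$ and $y_n<\beta^{-(k+1)}$) only yields $x_n+y_n<\beta^{-1}+2\beta^{-(k+1)}$, which is \emph{not} sufficient (it gives only $\beta\le\sqrt2$); to reach the claimed sum bound $\beta^{-1}+\beta^{-(k+1)}+\beta^{-(k+2)}$ you must use the constraint at the terminal position on the \emph{other} coordinate (e.g.\ $y_{n+k+1}<1/\beta$, hence $y_n<\beta^{-(k+2)}$, when $X=B$). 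You flag exactly this pitfall yourself at the end, and the sharpened estimate does go through, so the argument is sound once that substitution is made explicit.
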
 
 Furthermore, he  conjectured in \cite[Remark 4.3]{Sidorov_2007} that  the univoque set $\u_\beta$ is at most countable when $\beta\in(\beta_G,3/2]$.  
 Our first result answers his conjecture  affirmatively. 
 \begin{main}\label{th:S=D}
 If $\beta\in(\beta_G, 3/2]$, then 
 \[
 \u_\beta=\set{(0, 0), \left(\frac{1}{\beta-1}, 0\right), \left(0, \frac{1}{\beta-1}\right)}\cup \bigcup_{n=0}^\f\bigcup_{P\in \pazocal C_n, Q\in \pazocal D}\set{P+\frac{Q}{\beta^n(\beta^3-1)}},
 \]
 where 
 \begin{align*}
 \pazocal C_n&:=\set{(0,0), \left(\sum_{i=1}^n\frac{1}{\beta^i}, 0\right), \left(0, \sum_{i=1}^n\frac{1}{\beta^i}\right)}, \\
 \pazocal D&:=\set{(1,\beta), (\beta, 1), (1,\beta^2), (\beta^2, 1), (\beta, \beta^2), (\beta^2, \beta)}.
 \end{align*}
  \end{main}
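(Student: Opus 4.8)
The plan is to reduce the problem to a purely combinatorial statement about codings and then solve that statement by analysing the lexicographic constraints imposed on unique codings. First I would recall (or establish) the lexicographic characterisation of $\u_\beta$ for $\beta\in(1,2)$: translating each of the three overlap pieces $\O_A,\O_B,\O_C$ into inequalities, a point $P=((d_i))_\beta$ has a unique coding precisely when no shift $(d_{n+i})_{i\ge1}$ of its coding lands in one of the three ``bad'' lexicographic regions. Concretely the overlap region forces something like: the coding, read in any of the coordinate directions, can never be simultaneously large in one coordinate and not yet committed in another. For $\beta\le3/2$ the relevant quasi-greedy expansion of $1$ (or of $1/(\beta-1)$) is extremely short, which makes the admissible set of sequences very rigid. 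I expect the key preliminary step to be isolating the exact finite list of ``forbidden words'' that a unique coding may not contain as a factor after any shift.

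Next I would carry out the combinatorial classification. The three IFS maps use digits $A=(0,0)$, $B=(1,0)$, $C=(0,1)$, so a coding is a word over $\{A,B,C\}$. The claim is that, for $\beta\in(\beta_G,3/2]$, a sequence gives a unique coding iff it is eventually of a very restricted shape: a prefix that follows one of the three ``edge'' rays (all $A$'s, or the ray toward $(1/(\beta-1),0)$, or the ray toward $(0,1/(\beta-1))$) for some finite length $n$, followed by a tail that is forced to be periodic with period $3$, corresponding exactly to the six translate vectors in $\pazocal D$. I would show: (i) each listed point indeed lies in $\u_\beta$, by exhibiting its coding and checking that every shift avoids $\O_\beta$ — here the factor $1/(\beta^3-1)=\sum_{k\ge0}\beta^{-3k}$ signals a period-$3$ tail, and $\pazocal D$'s vectors like $(1,\beta)$ encode the three cyclic orderings of a single $B$ or $C$ inside a length-$3$ block; and (ii) conversely, any sequence whose every shift avoids the forbidden words must, after finitely many steps, be trapped into one of these period-$3$ cycles, because the lexicographic window available at $\beta\le3/2$ is too narrow to sustain any aperiodic behaviour. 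Step (ii) is essentially a finite-state argument: build the automaton of allowed factors and check that its only infinite paths, up to the finite transient, are the six periodic ones, together with the three fixed edge points.

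The main obstacle, I expect, is step (ii): proving that \emph{no} admissible sequence escapes the period-$3$ trap. This is where the hypothesis $\beta>\beta_G$ (equivalently $\beta^3>\beta^2+1$) must be used delicately — below $\beta_G$ even the period-$3$ tails fail and one recovers only the three edge points (Theorem \ref{th:sidorov}), so the argument has to be tight enough to see exactly the threshold $\beta_G$ and exactly the ceiling $3/2$. I would handle this by a careful case analysis on the first block of the coding, using the quasi-greedy expansion of the relevant endpoint at $\beta=3/2$ as the extremal comparison sequence, and showing that any deviation from an edge ray either immediately forces entry into $\O_\beta$ or forces the subsequent digits into the unique completion that makes the tail period-$3$. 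The bookkeeping of the three overlap regions and their symmetry under permuting the two nonzero digits $B,C$ should keep the casework manageable: one reduces to a single representative region by the obvious $\mathbb{Z}/3$ and reflection symmetries of the gasket, checks that case, and transports the conclusion. Finally, collecting the surviving sequences and summing the corresponding geometric series yields exactly the stated description of $\u_\beta$, with the transient prefixes accounted for by the sets $\pazocal C_n$ and the period-$3$ tails by $\pazocal D/(\beta^n(\beta^3-1))$.
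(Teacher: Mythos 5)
Your plan follows the paper's proof essentially step for step: the paper likewise reduces $\u_\beta$ to the lexicographically characterized set $\widetilde{\mathbf{U}}_\beta$ (the reduction being valid here precisely because $\S_\beta=\Delta_\beta$ for $\beta\le 3/2$), isolates the forbidden factors of the form $cdd$ together with the forced continuations $BAB\mapsto CAC$, $ABA\mapsto CBC$, etc., concludes that every non-constant admissible sequence is a constant block $d^n$ followed by one of the six period-$3$ cyclic orderings of $ABC$, and then sums the resulting geometric series to obtain the sets $\pazocal C_n$ and $\pazocal D/(\beta^n(\beta^3-1))$. The only cosmetic difference is that the paper runs the exclusion argument at the slightly larger base $\beta_2$ with $\delta(\beta_2)=(101000)^\infty$ (using monotonicity of $\beta\mapsto\widetilde{\mathbf{U}}_\beta$) rather than against the aperiodic comparison sequence $\delta(3/2)$, which keeps the word combinatorics finite.
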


In order to describe the set  $\U_\beta$ for $\beta\in(3/2, 2)$, we introduce  a     Thue-Morse  type sequence $(\la_i)\in\set{0,1}^\N$. Let $\re$ be the block map defined on the set $\Om:=\set{000, 001,100, 101}$ by 
\[
\re: \Om \ra \Om;\quad 000\mapsto 101, ~001\mapsto 100,~ 100\mapsto 001,~ 101\mapsto 000.
\]
Then for a word $\a=a_1\ldots a_m\in\Om^m$ we set $\re(\a):=\re(a_1)\cdots\re(a_m)$ as the block obtained by concatenating blocks $\re(a_1), \ldots, \re(a_m)$. We emphasize that each digit $a_i$ is a block of length $3$ from $\Om$.  
Let $\t_1=100\in\Om$, and for $n\ge 1$ we set
\[
\t_{n+1}:=\t_n^+\re(\t_n^+),
\]
where $\t_n^+$ denotes the word by changing the last digit  of $\t_n$   from zero to one. 
For example, 
\[\t_2=101000, \quad\t_3=101001\,000100, \quad\t_4=101001000101\; 000100101000,\quad\cdots.\]
 Then the sequence $(\t_n)$ has a componentwise   limit, denoted by $(\la_i)$. So $(\la_i)$   is an infinite sequence in $\set{0, 1}^\N$ related to the classical Thue-Morse sequence  
 (cf.~\cite{Allouche_Shallit_1999}).
One can check that the sequence  $(\la_i)$ begins with $\t_n^+$ for any $n\ge 1$
and $\la_{3k+2}=0$ for all $k\ge 0$.
  
  Let  $\beta_{c}\approx 1.55263$ be the unique root in $(1,2)$ of the equation
\begin{equation}\label{eq:betaG-betaC}
 \sum_{i=1}^\f\frac{\la_i}{x^i}=1.
\end{equation}
In view of Theorem \ref{th:sidorov}, our second result describes the size of   $\U_\beta$ for $\beta\in(\beta_G,2)$. 

\begin{main}
\label{thm:3}\mbox{}
 The number $\beta_c$ is transcendental. 
\begin{enumerate}
[{\rm(i)}]
\item If $\beta\in(\beta_G,\beta_{c})$, then $\U_\beta$ is   countably infinite; 
\item If $\beta=\beta_{c}$, then $\U_{\beta_{c}}$ is uncountable but has zero Hausdorff dimension;
\item If $\beta\in(\beta_{c}, 2)$, then $\U_{\beta}$ has positive Hausdorff dimension. 
\end{enumerate}
\end{main}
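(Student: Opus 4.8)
The plan is to translate the problem into combinatorics on words. The first step is a \emph{lexicographic characterization} of the set of codings $\US_\beta:=\{(d_i)\in\A^\N:((d_i))_\beta\in\U_\beta\}$. Writing $d_i=(a_i,b_i)$ with $a_ib_i=0$, the requirement that every shift $\sum_{i\ge1}d_{n+i}/\beta^i$ avoids each of the three overlap triangles $\O_A,\O_B,\O_C$ unfolds, for every $n\ge0$, into a finite family of lexicographic inequalities between the tails $(a_{n+i}),(b_{n+i})$ and the quasi-greedy expansions in base $\beta$ of the coordinates of the corners of $\O_\beta$ (the quantities $1/\beta$, $1/(\beta(\beta-1))$ and $1/(\beta-1)$). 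The symmetry permuting the three vertices $A,B,C$ collapses these to a single family of inequalities up to relabelling, and monotonicity of the admissible tails in $\beta$ shows that both $\US_\beta$ and $\dim_H\U_\beta$ are non-decreasing in $\beta$. Thus the size of $\U_\beta$ is governed by how much ``room'' these inequalities leave, and $(\la_i)$ is precisely the sequence that measures this room.

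The second step is to prove that $(\la_i)$ is the borderline admissibility sequence. I would establish a self-similarity lemma: the relevant tails of $(\la_i)$ lie exactly on the boundary of the admissibility inequalities, with the block map $\re$ recording the reflection forced by interchanging the roles of the vertices. This is the Sierpinski analogue of the Thue--Morse self-similarity behind the Komornik--Loreti constant, and it uses the structural facts that $(\la_i)$ begins with $\t_n^+$ for every $n$ and that $\la_{3k+2}=0$. Consequently the greedy or quasi-greedy expansion of $1$ in base $\beta$ is lexicographically smaller than, equal to, or larger than $(\la_i)$ according as $\beta\lessgtr\beta_c$, and the three regimes of Theorem~\ref{thm:3} correspond to the critical sequence sitting below, on, or above the threshold.

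The third step is the trichotomy. For $\beta>\beta_c$: the strict lexicographic slack lets me build, from a long prefix $\t_n^+$ of $(\la_i)$, a word $w$ and a companion $w'$ so that every element of a suitable subshift of finite type on the two blocks $w,w'$ lies in $\US_\beta$; this gives $\dim_H\U_\beta\ge\log 2/(|w|\log\beta)>0$, and by monotonicity it suffices to do this for $\beta$ arbitrarily close to $\beta_c$. For $\beta<\beta_c$: the inequalities now force any $(d_i)\in\US_\beta$ to be eventually one of finitely many periodic ``tame'' tails (the vertex codings and their shifts), so $\US_\beta$ is countable; exhibiting infinitely many admissible codings — the explicit family of Theorem~\ref{th:S=D} when $\beta\le3/2$, together with its analogue for $3/2<\beta<\beta_c$ — shows it is countably infinite, and since $\U_\beta=\u_\beta$ for $\beta\le3/2$ (Proposition~\ref{lem:22}) this recovers Sidorov's conjecture. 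For $\beta=\beta_c$: the self-similarity of $(\la_i)$ allows free interpolation between the two branches $\t_n^+\re(\t_n^+)\cdots$ at a sparse sequence of positions, producing an uncountable Cantor subset of $\US_{\beta_c}$; but the same rigidity shows that the number of admissible words of length $N$ grows subexponentially in $N$, whence $\dim_H\U_{\beta_c}=0$.

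Finally, transcendence of $\beta_c$: the sequence $(\la_i)$ is morphic (a fixed point, on the length-three alphabet $\Om$, of the substitution induced by $\a\mapsto\a^+\re(\a^+)$), not eventually periodic, and inherits from this substitutive structure the long repetitions required by the combinatorial transcendence criterion of Adamczewski--Bugeaud; were $\beta_c$ algebraic, then so would be $1/\beta_c$ and the value $\sum_{i\ge1}\la_i\beta_c^{-i}=1$, a contradiction — alternatively one adapts the Mahler-method argument of Allouche--Cosnard for the Komornik--Loreti constant. The hardest part, I expect, is Steps~1--2 together with the ``at most countable'' half of (i) and the zero-dimensionality in (ii): one must pin down the exact lexicographic boundary for the three-map overlap geometry, prove the delicate self-similar structure that singles out $(\la_i)$ — rather than the ordinary Thue--Morse sequence — as the correct critical object, and then control precisely how that structure throttles the growth of admissible words as $\beta$ passes through $\beta_c$.
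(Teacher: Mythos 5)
Your proposal follows essentially the same route as the paper: the lexicographic characterization of $\US_\beta$ via the quasi-greedy expansion $\de(\beta)$, the Thue--Morse self-similarity of $(\la_i)$ driving a forcing/rigidity argument that yields countability below $\beta_c$ and an uncountable zero-dimensional set at $\beta_c$, a two-block subshift of finite type giving positive Hausdorff dimension above $\beta_c$, and transcendence via the Mahler-method adaptation of Allouche--Cosnard. The only caution is that your first-choice transcendence route (Adamczewski--Bugeaud) is delicate here because the value $\sum_{i\ge1}\la_i\beta_c^{-i}$ equals $1$, an algebraic number that such combinatorial criteria typically cannot exclude; the paper instead rewrites the defining equation in terms of the classical Thue--Morse sequence, isolates $\sum_n\tau_n\beta^{-3n}$ as an algebraic expression in $\beta$, and applies Mahler's theorem directly, exactly as your fallback suggests.
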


\begin{remark}\mbox{}
\begin{itemize}
\item Theorem \ref{thm:3} can be viewed as an analogue of the main results  of Glendinning and Sidorov \cite{Glendinning_Sidorov_2001} for the one dimensional  unique $\beta$-expansions. Then $\beta_c$ is an analogue of the \emph{Komornik-Loreti constant} first investigated by Komornik and Loreti in \cite{Komornik-Loreti-1998}. 

\item  Our proof of Theorem \ref{thm:3} is using word combinatorics  based on the lexicographical characterization of $\U_\beta$ (see Proposition \ref{prop:21}). 
Our method allows us to give a complete description of   $\U_\beta$ for $\beta\in(1, \beta_c]$. 
\end{itemize}
\end{remark}

The rest of the  paper is   organized  as follows. In Section 2 we give the lexicographical characterization of the  intrinsic univoque set $\U_\beta$. Based on this characterization we give an alternate proof of Theorem \ref{th:sidorov} in Section \ref{sec:S=D}. Furthermore, we  prove Theorem \ref{th:S=D} which provides an affirmative answer to a conjecture of Sidorov. In Section \ref{sec:admissible} we investigate all possible admissible words in $\U_{\beta_c}$   based on the three types of Thue-Morse words with alphabet  $\A$. The proof of the main result Theorem \ref{thm:3} is given in Section \ref{s6}. In the final section we pose some questions.

 \section{Characterization of  $\U_\beta$ and a sufficient condition for $\u_\beta=\U_\beta$}\label{s2}

 Given $\beta\in(1,2)$, recall that the Sierpinski gasket $\S_\beta$ is the self-similar set in $\R^2$ generated by the IFS 
 \[
 f_{\beta, d}(x)=\frac{x+d}{\beta},  \quad d\in\A=\set{A, B, C},
 \]
 where 
 $
 A:=(0,0),  B:=(1,0)$ and  $C:=(0,1)$ are the  digits.     Then for any point $P\in \S_\beta$ there exists a sequence $(d_i)\in\A^\N$ such that 
 \begin{equation}\label{eq:coding}
 P=((d_i))_\beta=\sum_{i=1}^\f\frac{d_i}{\beta^i}.
 \end{equation}
 Note that each point $P$ in $\R^2$ can be written as $P=(P^1, P^2)$, where   $P^1$ and $P^2$ are the first and  second coordinates of $P$. Similarly, each digit $d_i\in\A$ can be written as $d_i=(d_i^1, d_i^2)$. Then (\ref{eq:coding}) can be rewritten coordinately as  
 \[
 P^1=((d_i^1))_\beta:=\sum_{i=1}^\f\frac{d_i^1}{\beta^i} \quad\textrm{and}\quad P^2=((d_i^2))_\beta:= \sum_{i=1}^\f\frac{d_i^2}{\beta^i}.
 \]
   
In view of Figure \ref{Fig:5},  the convex hull $\Delta_\beta$ of $\S_\beta$   is the triangle  with three vertices $(0, 0), (1/(\beta-1), 0)$ and $(0, 1/(\beta-1))$.
 Then $f_{\beta, A}(\Delta_\beta)$ is the left-bottom triangle, $f_{\beta, B}(\Delta_\beta)$ is the right-bottom triangle, and $f_{\beta, C}(\Delta_\beta)$ is the top triangle.  Since $1<\beta<2$, the overlap region $\O_\beta=\O_A\cup\O_B\cup\O_C$ has non-empty interior, where 
 \begin{align*}
 \O_A:=f_{\beta, B}(\Delta_\beta)\cap f_{\beta, C}(\Delta_\beta),\quad\O_B=f_{\beta, A}(\Delta_\beta)\cap f_{\beta, C}(\Delta_\beta),\quad\O_C=f_{\beta, A}(\Delta_\beta)\cap f_{\beta, B}(\Delta_\beta).
 \end{align*}
 Indeed, each of the sets $\O_A, \O_B$ and $\O_C$ has non-empty interior for   $\beta\in(1,2)$. 

 In order to  describe the intrinsic univoque set $\U_\beta$, we   need some notation from symbolic dynamics. For a word $\c=c_1\ldots c_n\in\set{0,1}^*$ we mean a finite string of zeros and ones. For an integer $k\ge 1$ we denote by $\c^k:=\c\c\cdots\c$ the $k$-times concatenation of $\c$ with itself, and we write for $\c^\f$ the periodic sequence with periodic block $\c$. For a word $\c=c_1\ldots c_n$ with $c_n=0$ we denote by $\c^+:=c_1\ldots c_{n-1}(c_n+1)$.
 For a sequence $(c_i)\in \set{0,1}^\N$ we define its \emph{reflection} by
 $
 \overline{(c_i)}:=(1-c_1)(1-c_2)\cdots.
 $ 
 Throughout the paper we will use the lexicographical order between sequences and words. More precisely,  for two sequences $(c_i), (d_i)\in\set{0,1}^\N$ we write $(c_i)\prec (d_i)$ or $(d_i)\succ (c_i)$ if $c_1<d_1$, or there exists $k\ge 2$ such that $c_i=d_i$ for all $1\le i<k$ and $c_k<d_k$. Similarly, we write $(c_i)\lle (d_i)$ or $(d_i)\lge(c_i)$ if $(c_i)\prec (d_i)$ or $(c_i)=(d_i)$.  Furthermore, for two words $\c, \d\in\set{0, 1}^*$ we say $\c\prec \d$ if $\c 0^\f\prec \d 0^\f$. 

Given $\beta\in(1,2]$ let $\de(\beta)=(\de_i(\beta))\in\set{0,1}^\N$ be the \emph{quasi-greedy} $\beta$-expansion of $1$ (cf.~\cite{Daroczy_Katai_1993}), i.e., the lexicographically largest $\beta$-expansion of $1$ not ending with a string of zeros. The following characterization of  $\de(\beta)$ was essentially due to Parry \cite{Parry_1960} (see also \cite{Allaart-Baker-Kong-17, Baiocchi_Komornik_2007}).
\begin{lemma}\label{lem:delta-beta}
\begin{enumerate}
[{\rm(i)}]
\item The map $\beta\mapsto \de(\beta)$ is a strictly increasing bijection from $(1,2]$ onto the set $\mathbf{A}$ of all sequences $(a_i)\in\set{0,1}^\N$ not ending with $0^\f$ and satisfying
\[
a_{n+1}a_{n+2}\ldots \lle a_1 a_2\ldots\quad\textrm{for all }n\ge 0.
\]

\item The inverse map 
\[
\de^{-1}: \mathbf A\ra (1,2];\quad (a_i)\mapsto \de^{-1}((a_i))
\]
is bijective and strictly increasing. Furthermore, $\de^{-1}$ is continuous. 
\end{enumerate}
\end{lemma}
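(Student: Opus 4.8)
My plan is to route everything through the classical theory of greedy $\beta$-expansions; the statement is essentially due to Parry~\cite{Parry_1960}, and I would cite \cite{Daroczy_Katai_1993, Baiocchi_Komornik_2007, Allaart-Baker-Kong-17} for the various ingredients. For $\beta\in(1,2]$ write $\al(\beta)=(\al_i(\beta))\in\set{0,1}^\N$ for the \emph{greedy} $\beta$-expansion of $1$. Parry's theorem supplies three facts I would use: $\beta\mapsto\al(\beta)$ is strictly increasing; a sequence is of the form $\al(\beta)$ precisely when each of its shifted tails that is not $0^\f$ is \emph{strictly} lexicographically smaller than the sequence itself; and the greedy remainders lie in $[0,1)$, i.e. $\sum_{j\ge1}\al_{k+j}(\beta)/\beta^j<1$ for all $k\ge0$. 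One recovers $\de(\beta)$ by the familiar dichotomy: if $\al(\beta)$ does not end in $0^\f$ then $\de(\beta)=\al(\beta)$, while if $\al(\beta)=\al_1\ldots\al_{m-1}10^\f$ (the last nonzero digit being $1$ because $\beta\le2$, forcing also $m\ge2$) then $\de(\beta)=(\al_1\ldots\al_{m-1}0)^\f$, which is readily checked to be an expansion of $1$ and, indeed, the lexicographically largest one not ending in $0^\f$. A short two-case computation — in the periodic case leaning on the \emph{strict} inequalities for $\al(\beta)$ — then shows that $\de(\beta)$ satisfies $a_{n+1}a_{n+2}\ldots\lle a_1a_2\ldots$ for all $n\ge0$ and that every tail of $\de(\beta)$ has $\beta$-value $\le1$; hence $\de(\beta)\in\mathbf A$, so $\beta\mapsto\de(\beta)$ maps $(1,2]$ into $\mathbf A$.

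The converse direction I would anchor on an admissibility lemma. Given $(a_i)\in\mathbf A$, the function $x\mapsto\sum_i a_i/x^i$ is continuous, strictly decreasing on $(1,\f)$, tends to $\sum_i a_i=+\f$ as $x\to1^+$ (since $(a_i)$ does not end in $0^\f$) and to $0$ as $x\to\f$; so $\sum_i a_i/\beta^i=1$ has a unique solution $\beta$, and $1=\sum_i a_i/\beta^i\le\sum_{i\ge1}\beta^{-i}=(\beta-1)^{-1}$ forces $\beta\le2$. Put $\de^{-1}((a_i)):=\beta$. The lemma asserts: any $(c_i)\in\set{0,1}^\N$ with $c_{n+1}c_{n+2}\ldots\lle a_1a_2\ldots$ for all $n\ge0$ satisfies $\sum_i c_i/\beta^i\le1$. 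I would prove it by a geometric blow-up: if $\rho_0:=\sum_i c_i/\beta^i>1$, replace $(c_i)$ by its shift at the first coordinate $k\ge1$ where it disagrees with $(a_i)$ — a coordinate carrying $0$ in $(c_i)$ and $1$ in $(a_i)$ — and a one-line computation gives $\rho_1-1\ge\beta^k(\rho_0-1)\ge\beta(\rho_0-1)$ for the value $\rho_1$ of the shifted sequence (which is again admissible, with $\rho_1>1$). Iterating, $\rho_m-1\ge\beta^m(\rho_0-1)\to\f$, contradicting $\rho_m\le(\beta-1)^{-1}$.

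With this in hand, the bijection is immediate. Since $\de(\beta)$ is an expansion of $1$, $\de^{-1}(\de(\beta))=\beta$; conversely, for $(a_i)\in\mathbf A$ and $\beta=\de^{-1}((a_i))$ we have $\de(\beta)\lge(a_i)$ ($\de(\beta)$ being the largest expansion of $1$ not ending in $0^\f$), and were the inequality strict, at the first disagreement $k$ — where $\de(\beta)$ carries a $1$ and $(a_i)$ a $0$ — equating the two $\beta$-expansions of $1$ would give $\sum_{j\ge1}a_{k+j}/\beta^j\ge1$, whereas the admissibility lemma applied to the shifted tail $a_{k+1}a_{k+2}\ldots$ (still in the admissible class, as $(a_i)\in\mathbf A$) gives $\le1$; equality then forces $\de(\beta)$ to terminate in $0^\f$, which is absurd. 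Hence $\de$ and $\de^{-1}$ are mutually inverse bijections between $(1,2]$ and $\mathbf A$. For strict monotonicity: if $(a_i)\prec(b_i)$ in $\mathbf A$, then every tail of $(a_i)$ is $\lle(a_i)\prec(b_i)$, so $(a_i)$ is admissible for $(b_i)$ and the lemma yields $\sum_i a_i/(\de^{-1}((b_i)))^i\le1=\sum_i a_i/(\de^{-1}((a_i)))^i$, whence $\de^{-1}((a_i))\le\de^{-1}((b_i))$, strict by injectivity; so $\de^{-1}$, and therefore $\de$, is strictly increasing. For continuity of $\de^{-1}$: if $(a^{(n)})\ra(a)$ in $\set{0,1}^\N$ with all terms in $\mathbf A$, the maps $x\mapsto\sum_i a^{(n)}_i/x^i$ converge to $x\mapsto\sum_i a_i/x^i$ uniformly on compact subsets of $(1,\f)$ (they eventually agree on arbitrarily long prefixes), are all strictly decreasing, and keep their roots in a fixed compact subinterval of $(1,2]$ (a long common prefix of $(a)$ already bounds the roots below by some $1+\ep_0$), so $\de^{-1}((a^{(n)}))\ra\de^{-1}((a))$.

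I expect the genuine obstacle to be exactly this converse inclusion: certifying that a sequence merely obeying the Parry-type inequalities and representing $1$ really is the \emph{quasi-greedy} expansion. For $\beta<2$ lexicographic order does not control $\beta$-values, so a naive comparison of series is useless; the geometric blow-up behind the admissibility lemma is what bypasses this, after which membership in $\mathbf A$, surjectivity, strict monotonicity and continuity are all routine. It is worth noting that the \emph{strictness} of Parry's inequalities is invoked only for the greedy expansion, never for $\mathbf A$ — which is precisely the structural difference between greedy and quasi-greedy expansions of $1$.
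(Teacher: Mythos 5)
Your proof is correct. The paper does not actually prove Lemma~\ref{lem:delta-beta} — it is stated as a known result with citations to \cite{Parry_1960, Allaart-Baker-Kong-17, Baiocchi_Komornik_2007} — and your argument (the greedy/quasi-greedy dichotomy, the geometric blow-up proving the admissibility lemma, and the locally uniform root-convergence argument for continuity of $\de^{-1}$) is precisely the standard proof found in those references, so there is nothing to contrast.
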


Recall that $\U_\beta$ is the intrinsic univoque set consisting of all points  $((d_i))_\beta\in\S_\beta$  satisfying 
$
((d_{n+i}))_\beta\notin\O_\beta$ {for all} $n\ge 0.
$
  Let 
\[
\US_\beta:=\set{(d_i)\in\A^\N: ((d_i))_\beta\in\U_\beta}
\]
be the set of all codings of points from $\U_\beta$. Note that  $\U_\beta\subseteq \u_\beta$ for all $\beta\in(1,2)$. This means each point in $\U_\beta$ has a unique coding.
So the map $(d_i)\mapsto ((d_i))_\beta$ is  bijective  from $\US_\beta$ to $\U_\beta$.

For a digit $d=(d^1, d^2)\in\A$ we write $d^\+:=d^1+d^2$. Then  $d^1, d^2, d^\+\in\set{0, 1}$.  In the following  we give a lexicographical characterization of $\US_\beta$, or equivalently, of $\U_\beta$.

 \begin{proposition}
 \label{prop:21}
Let $\beta\in(1,2)$. Then     $(d_i)\in\US_\beta$ if and only if   $(d_i)\in\A^\N$ satisfies 
\[\left\{\begin{array}{lll}
d_{n+1}^1d_{n+2}^1\ldots\prec \de(\beta)&\quad\textrm{whenever}\quad& d_n^1=0,\\
d_{n+1}^2 d_{n+2}^2\ldots\prec \de(\beta)&\quad\textrm{whenever}\quad& d_n^2=0,\\
d_{n+1}^\+d_{n+2}^\+\cdots \succ\overline{\de(\beta)}&\quad\textrm{whenever}\quad& d_n^\+=1.
\end{array}\right.\]
 \end{proposition}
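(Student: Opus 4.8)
The plan is to translate the geometric condition ``$((d_{n+i}))_\beta \notin \O_\beta$ for all $n \ge 0$'' into the three lexicographic inequalities, by first obtaining a clean algebraic description of the overlap region $\O_\beta = \O_A \cup \O_B \cup \O_C$ and then recognizing each piece as a constraint on a coordinate sequence. First I would compute the three overlap triangles explicitly. From Figure~\ref{Fig:5}, $f_{\beta,A}(\Delta_\beta)$, $f_{\beta,B}(\Delta_\beta)$, $f_{\beta,C}(\Delta_\beta)$ are scaled-by-$1/\beta$ copies of $\Delta_\beta$ translated by $A,B,C$, and their pairwise intersections are again triangles whose defining inequalities I would write down in terms of the coordinates $(x,y)$: for instance $\O_C = f_{\beta,A}(\Delta_\beta)\cap f_{\beta,B}(\Delta_\beta)$ is cut out by $x \ge 1/\beta$ (from the $B$-copy), together with the inequalities defining membership in the $A$-copy; after simplification this should reduce to a single linear inequality on $x+y$ (equivalently on the ``$\+$''-coordinate) of the form ``$x + y$ lies in a half-plane''. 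Symmetrically, $\O_B$ should reduce to a constraint forcing the $x$-coordinate into a sub-interval near $1/\beta$, and $\O_A$ to a constraint on the $y$-coordinate. The key arithmetic fact I expect to need is that the dividing line $1/\beta$ in the first coordinate corresponds, for a sequence $(c_i)\in\{0,1\}^\N$, to the dichotomy ``$c_1 = 0$ and $c_2 c_3 \ldots \succeq \de(\beta)$'' versus its complement, via Lemma~\ref{lem:delta-beta} and the standard fact that $((c_i))_\beta \ge 1/\beta$ with $c_1=0$ forces $c_2c_3\ldots \succeq \de(\beta)$ (and $\ge$ becomes $>$ after removing the boundary case using that $\de(\beta)$ does not end in $0^\infty$).

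The second step is the reduction: I would show that the shifted point $((d_{n+i}))_\beta$ avoids $\O_\beta$ for all $n$ if and only if, whenever the $n$-th digit satisfies $d_n^1 = 0$, the tail $d_{n+1}^1 d_{n+2}^1 \ldots$ of the first coordinate is $\prec \de(\beta)$ — and symmetrically for the second coordinate and for the $\+$-coordinate with the reflected inequality $\succ \overline{\de(\beta)}$. The mechanism is: the point $((d_{n+i}))_\beta$ lies in $\O_C$ precisely when its $\+$-coordinate $((d_{n+i}^\+))_\beta$ sits in a certain interval, and that interval's position relative to the critical threshold is detected by comparing $d_{n+1}^\+ d_{n+2}^\+ \ldots$ with $\de(\beta)$ or $\overline{\de(\beta)}$; the hypothesis $d_n^\+ = 1$ (which is exactly the negation of $d_n^1 = d_n^2 = 0$) is what ties the constraint on the tail at level $n+1$ to the digit at level $n$, because the relevant overlap condition for the piece containing a given coordinate only activates when the preceding digit lies on the appropriate side. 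Concretely: if $d_n^1 = 1$ then $f_{\beta, d_n}$ is either $f_{\beta,B}$ or not applicable to the first-coordinate constraint, so no first-coordinate overlap condition is imposed at level $n$; the surviving constraints are exactly the three listed. I would organize this as three symmetric claims, prove one in detail (say the $\O_C$ / $\+$-coordinate case), and indicate that the other two follow by the obvious coordinate permutation.

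For the ``only if'' direction one assumes $(d_i)\in\US_\beta$, fixes $n$ with $d_n^1 = 0$, and must rule out $d_{n+1}^1 d_{n+2}^1 \ldots \succeq \de(\beta)$; if equality held or it were larger, I would produce an index $m \ge n$ such that $((d_{m+i}^\+))_\beta$ or one of the coordinate values lands inside $\O_\beta$, contradicting $(d_i)\in\US_\beta$ — here the subtlety is handling the boundary, i.e.\ the case $d_{n+1}^1 d_{n+2}^1 \ldots = \de(\beta)$, where the point sits exactly on the boundary of an overlap triangle; since overlap triangles are closed and have nonempty interior, and $\de(\beta)$ never ends in $0^\infty$, one shifts further to find a genuine interior hit, or argues directly that a boundary point of $\O_\beta$ with a coding whose tail equals $\de(\beta)$ has multiple codings. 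For the ``if'' direction one assumes the three inequalities and shows every shift avoids all of $\O_A, \O_B, \O_C$ by contraposition: a shift landing in, say, $\O_C$ gives a coordinate estimate that, combined with the structure of $\de(\beta)$ from Lemma~\ref{lem:delta-beta}(i), forces the corresponding tail to be $\succeq \de(\beta)$ (resp.\ $\lle \overline{\de(\beta)}$) with the preceding digit on the triggering side, violating the hypothesis.

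The main obstacle I anticipate is the boundary-case bookkeeping in the precise algebraic identification of the three overlap triangles with the three lexicographic conditions — in particular getting the strict inequalities $\prec$ and $\succ$ exactly right (rather than $\lle$, $\lge$) and confirming that the three conditions are not merely necessary but jointly sufficient, i.e.\ that no further constraint comes from the geometry. This requires being careful that $\O_\beta$ is the union of three \emph{closed} triangles, so membership is a non-strict inequality on coordinates, which flips to a \emph{strict} lexicographic inequality on tails exactly because of the ``no $0^\infty$ tail'' property of $\de(\beta)$ and the fact that the critical coordinate thresholds ($1/\beta$, $1/(\beta(\beta-1))$, and the slanted one) are themselves attained only by codings ending in $0^\infty$ or by the quasi-greedy expansion. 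Once the geometry-to-symbolics dictionary is pinned down for one overlap piece, the rest is symmetric and mechanical.
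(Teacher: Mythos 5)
Your plan follows the paper's route: avoidance of $\O_\beta$ is converted into coordinate thresholds (each coordinate $<1/\beta$ when the previous digit's coordinate is $0$, the sum $>1/(\beta(\beta-1))$ when it is $1$), and the thresholds are then converted into lexicographic comparisons with $\de(\beta)$. Your necessity argument is fine (a tail $\lge\de(\beta)$ does force the value over the threshold, and $\O_\beta$ is closed, so the boundary case takes care of itself). The genuine gap is in the sufficiency direction, and it is located exactly in the ``standard fact'' you invoke: the implication ``$c_1=0$ and $((c_i))_\beta\ge 1/\beta$ force $c_2c_3\ldots\lge\de(\beta)$'' is \emph{false}. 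Equivalently, $c_2c_3\ldots\prec\de(\beta)$ does not by itself give $((c_2c_3\ldots))_\beta<1$. Counterexample: for any $\beta$ below the golden ratio (which covers the whole range relevant to this proposition's use in the paper), the tail $01^\f$ satisfies $01^\f\prec\de(\beta)$ since $\de_1(\beta)=1$, yet $((01^\f))_\beta=\frac{1}{\beta(\beta-1)}>1$. So your contraposition ``a shift landing in $\O_C$ forces the single tail at that index to be $\lge\de(\beta)$, violating the hypothesis at that index'' does not close: lexicographic smallness of one tail does not control its value in a non-integer base.

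The fix --- and this is what the paper actually does --- is to use the hypothesis at infinitely many indices simultaneously and argue sufficiency directly rather than by a one-step contraposition. From ``$d_{m+1}^\ell d_{m+2}^\ell\ldots\prec\de(\beta)$ whenever $d_m^\ell=0$'' one builds a recursive chain of indices $k_0=n<k_1<k_2<\cdots$ at which the coordinate word first drops strictly below $\de(\beta)$ (each drop lands on a digit $0$, so the hypothesis re-applies and produces the next index), and the telescoping estimate $\sum_j\bigl(\beta^{-k_{j-1}}-\beta^{-k_j}\bigr)=\beta^{-k_0}$ then yields the strict bound $(d_n^\ell d_{n+1}^\ell\ldots)_\beta<1/\beta$. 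In the counterexample above the hypothesis fails at the \emph{next} index (after the leading $0$ of $01^\f$ the remaining tail is $1^\f\succ\de(\beta)$), which is precisely the information a single-index contraposition discards. Everything else in your outline --- the geometric dictionary, the treatment of the periodic/quasi-greedy boundary via the no-$0^\f$-tail property, the symmetry among the three conditions --- matches the paper.
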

 Before proving the proposition we point out that each coding $(d_i)\in\US_\beta$ is an infinite sequence of vectors from $\A$, while the characterization is based on   the projections of $(d_i)$ which are infinite sequences of zeros and ones.
 \begin{proof}
 
First we prove the necessity. Take a sequence $(d_i)\in\US_\beta$. We consider three cases.

Case I. Suppose $d_n^1=0$ for some $n\ge 1$. Then $d_n=A$ or $d_n=C$.  In view of Figure \ref{Fig:5},  it follows    that 
\[(d_nd_{n+1}\ldots)_\beta\in (f_{\beta, A}(\Delta_\beta)\cup f_{\beta, C}(\Delta_\beta))\setminus\O_\beta,\]
 which implies  
 $
  (d_n^1d_{n+1}^1\ldots)_\beta<{1}/{\beta}.
 $
Whence,
 \begin{equation}\label{eq:prop1}
 (d_{n+1}^1d_{n+2}^1\ldots)_\beta<1=(\de(\beta))_\beta.
 \end{equation}
 If $\de(\beta)$ is not periodic, then $\de(\beta)$ is also the greedy $\beta$-expansion of $1$, which is the lexicographically largest  $\beta$-expansion of $1$ (cf.~\cite{Daroczy_Katai_1993}). So (\ref{eq:prop1}) implies
 \begin{equation}\label{eq:prop2}
 d_{n+1}^1d_{n+2}^1\ldots\prec \de(\beta)
 \end{equation}
 as required.
 
If $\de(\beta)$ is periodic, say $\de(\beta)=(\de_1\ldots \de_m)^\f$ with $m$ the smallest period, then $\de_m=0$, and the greedy $\beta$-expansion of $1$ is given by $\de_1\ldots \de_m^+ 0^\f$. By (\ref{eq:prop1}) it follows that 
$
d_{n+1}^1d_{n+2}^1\ldots\prec \de_1\ldots\de_m^+ 0^\f,
$
which yields 
\[d_{n+1}^1\ldots d_{n+m}^1\lle \de_1\ldots\de_m.\]
 If the strict inequality holds, then (\ref{eq:prop2}) holds and we are done. Suppose $d_{n+1}^1\ldots d_{n+m}^1=\de_1\ldots \de_m$. Then $d_{n+m}^1=0$. By the same argument as above, we either have (\ref{eq:prop2}) or $d_{n+1}^1\ldots d_{n+2m}^1=(\de_1\ldots \de_m)^2$. 
 
 Repeating this procedure indefinitely we either have (\ref{eq:prop2}) or  $d^1_{n+1}d_{n+2}^1\ldots=(\de_1\ldots \de_m)^\f=\de(\beta)$. Note  by (\ref{eq:prop1})   the second case can not occur. This proves (\ref{eq:prop2}) if $d_n^1=0$. 
 
Case II. Suppose  $d_n^2=0$ for some $n\ge 1$. Then by a similar argument as in Case I  we can prove $d_{n+1}^2d_{n+2}^2\ldots\prec \de(\beta)$. 
  
Case III. Suppose $d_n^\+=1$, i.e., $d_n^1=1$ or $d_n^2=1$. By symmetry we may assume $d_n^1=1$. Then $d_n=(1,0)=B$. In view of Figure \ref{Fig:5},   it follows   that  
 $(d_nd_{n+1}\ldots)_\beta\in f_{\beta, B}(\Delta_\beta)\setminus\O_\beta$, which implies 
 \[
(1d_{n+1}^\+d_{n+2}^\+\ldots)_\beta= (d_n^1d_{n+1}^1\ldots)_\beta+(d_n^2d_{n+1}^2\ldots)_\beta>\frac{1}{\beta(\beta-1)}.
 \]
 This is equivalent to 
 \[
 \frac{1}{\beta-1}-(d_{n+1}^\+ d_{n+2}^\+\ldots)_\beta<1.
 \]
 Whence,
 \[
 (\overline{d_{n+1}^\+ d_{n+2}^\+\ldots})_\beta<1.
 \]
 Observe that $\overline{d_i^\+}=1-d_i^\+\in\set{0,1}$ for any $i\ge 1$, and $\overline{d_n^\+}=0$. Then by the same argument as in the case $d_n=0$ we can show that 
 \[
\overline{d_{n+1}^\+d_{n+2}^\+\ldots}\prec \de(\beta).
 \]
 In other words,
 $
d_{n+1}^\+d_{n+2}^\+\ldots \succ\overline{\de(\beta)}.
 $  This proves the necessity.

Now we turn to prove the sufficiency. Let $(d_i)\in\A^\N$ be the sequence satisfying the inequalities in the proposition. Then  it suffices to prove 
\begin{equation}\label{eq:kr1}
(d_nd_{n+1}\ldots)_\beta\in f_{\beta, d_n}(\Delta_\beta)\setminus \O_\beta\quad\textrm{for all }n\ge 1.
\end{equation}
Fix $n\ge 1$. Suppose $d_n=A=(0,0)$. Then   for $\ell\in\set{1,2}$ we have 
 \begin{equation}\label{eq:k1}
 d_{m+1}^\ell d_{m+2}^\ell\ldots\prec\de(\beta)\quad\textrm{whenever}\quad d_m^\ell=0.
 \end{equation}
 Since $d_n^\ell=0$, we claim that $ (d_n^\ell d_{n+1}^\ell\ldots)_\beta<{1}/{\beta}$.
 
 Starting with $k_0:=n$ we define by recurrence a sequence of indices $k_0<k_1<\cdots$ satisfying for $j=1,2,\ldots$ the conditions
 \[
 d_{k_{j-1}+1}^\ell\ldots d_{k_j-1}^\ell=\de_1(\beta)\ldots \de_{k_j-k_{j-1}-1}(\beta)\quad\textrm{and}\quad d_{k_j}^\ell<\de_{k_j-k_{j-1}}(\beta).
 \]
 By \eqref{eq:k1} it follows that we have infinitely many indices $(k_j)$. Then 
 \begin{align*}
 (d_n^\ell d_{n+1}^\ell\ldots)_\beta=\beta^{n-1}\sum_{i=n+1}^\f\frac{d_i^\ell}{\beta^i}&=\beta^{n-1}\sum_{j=1}^\f\sum_{i=1}^{k_j-k_{j-1}}\frac{d_{k_{j-1}+i}^\ell}{\beta^{k_{j-1}+i}}\\
 &=\beta^{n-1}\sum_{j=1}^\f\left(\sum_{i=1}^{k_j-k_{j-1}}\frac{\de_{i}(\beta)}{\beta^{k_{j-1}+i}}-\frac{1}{\beta^{k_j}}\right)\\
 &<\beta^{n-1}\sum_{j=1}^\f\left(\frac{1}{\beta^{k_{j-1}}}-\frac{1}{\beta^{k_j}}\right)\\
 &=\frac{\beta^{n-1}}{\beta^{k_0}}=\frac{1}{\beta}.
 \end{align*}
 So, for $d_n=A$ we have 
 \[(d_n^1 d_{n+1}^1\ldots)_\beta<\frac{1}{\beta}\quad \textrm{and}\quad (d_n^2 d_{n+1}^2\ldots)_\beta<\frac{1}{\beta}.\] In view of Figure \ref{Fig:5} this implies that $(d_nd_{n+1}\ldots)_\beta\in f_{\beta, A}(\Delta_\beta)\setminus\O_\beta$. Hence, (\ref{eq:kr1}) holds if  $d_n=A$.

Since the proof of (\ref{eq:kr1}) for $d_n=C$ is similar to  that for $d_n=B$, it suffices to prove (\ref{eq:kr1}) for $d_n=B=(1,0)$. Note that  
 \[
d_{m+1}^\+d_{m+2}^\+\ldots \succ\overline{\de(\beta)}\quad\textrm{whenever }  d_m^\+=1.
 \]
This is equivalent to
\[
\overline{d_{m+1}^\+d_{m+2}^\+\ldots}\prec \de(\beta)\quad\textrm{whenever } \overline{d_m^\+}=0.
\]
Observe that $\overline{d_n^\+}=1-d_n^\+=0$. By the same argument as in the case for $d_n^\ell=0$ it follows that 
\[
\frac{1}{\beta-1}-(d_n^\+d_{n+1}^\+\ldots)_\beta=(\overline{d_{n}^\+d_{n+1}^\+\ldots})_\beta<\frac{1}{\beta},
\]
which implies
 \begin{equation}\label{eq:kr2}
 (d_n^1d_{n+1}^1\ldots)_\beta+(d_n^2d_{n+2}^2\ldots)_\beta=(d_n^\+d_{n+1}^\+\ldots)_\beta>\frac{1}{\beta(\beta-1)}.
  \end{equation}
  Since $d_n^2=0$, by the above proof it follows that 
  \begin{equation}\label{eq:kr3}
  (d_n^2d_{n+1}^2\ldots)_\beta<\frac{1}{\beta}.
  \end{equation}
In view of Figure \ref{Fig:5}, we obtain  by (\ref{eq:kr2}) and (\ref{eq:kr3}) that 
 $
 (d_nd_{n+1}\ldots)_\beta\in f_{\beta, B}(\Delta_\beta)\setminus\O_\beta.
 $ This proves  (\ref{eq:kr1}) if $d_n=B$, and then completes the proof.   
 \end{proof}
 
 \begin{remark}\label{re:22}\mbox{}
 
 \begin{itemize}
\item Note by Lemma \ref{lem:delta-beta} that the map $\beta\mapsto \de(\beta)$ is strictly increasing in $(1,2)$. Then Proposition \ref{prop:21} implies that the set-valued map $\beta\mapsto\US_\beta$ is increasing, i.e., for $1<\beta_1<\beta_2<2$ we have $\US_{\beta_1}\subseteq\US_{\beta_2}$. Moreover, the symbolic   set $\US_\beta$ is shift invariant, i.e., for any sequence $(d_i)\in\US_\beta$ we have $\si((d_i)):=(d_{i+1})\in\US_\beta$.

\item Observe that  each point $P=(P^1, P^2)\in\U_\beta$ has a unique coding  with respect to the alphabet $\A$. However, this does not mean its projections  $P^1$ and $P^2$  have a unique $\beta$-expansion with respect to the digits set $\set{0, 1}$.   For example, take $\beta\in(1,2)$ such that  $\de(\beta)=(101000)^\f$. Then by Proposition \ref{prop:21} one can check that  the sequence
\[
(d_i)=(BAC)^\f\in\US_\beta.
\]  
However, neither of its coordinate sequences  $(d_i^1)=(100)^\infty$ and $(d_i^2)=(001)^\f$ is a  univoque sequence in the one dimensional sense (cf.~\cite{DeVries_Komornik_2008}).  
\end{itemize}
 \end{remark}
 
At the end of this section we present a sufficient condition for which the intrinsic univoque set $\U_\beta$ coincides with the univoque set $\u_\beta$. A number $\beta\in(1,2)$ is called a \emph{multinacci number} if $\de(\beta)=(1^m0)^\f$ for some positive integer $m$. So by Lemma \ref{lem:delta-beta} the smallest multinacci number is the Golden Ratio ${(1+\sqrt{5})}/{2}\approx 1.61803$ with $\de({(1+\sqrt{5})}/{2})=(10)^\f$.

 \begin{proposition}\label{lem:22}
 If $\beta\in(1,3/2]$ or $\beta$ is a multinacci number, then $\U_\beta=\u_\beta$.
 \end{proposition}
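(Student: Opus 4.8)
Since $\U_\beta\subseteq\u_\beta$ is already known, the plan is to prove the reverse inclusion $\u_\beta\subseteq\U_\beta$; equivalently, I want to show that if $(d_i)\in\A^\N$ satisfies $(d_i)\notin\US_\beta$, then $((d_i))_\beta$ has at least two codings. I would handle the two stated ranges of $\beta$ separately. When $\beta\in(1,3/2]$ one has $\S_\beta=\Delta_\beta$, so
\[
\O_\beta=\bigcup_{c\ne d}f_{\beta,c}(\Delta_\beta)\cap f_{\beta,d}(\Delta_\beta)=\bigcup_{c\ne d}f_{\beta,c}(\S_\beta)\cap f_{\beta,d}(\S_\beta).
\]
If $(d_i)\notin\US_\beta$, then by the definition of $\U_\beta$ the orbit point $((d_{n+i}))_\beta$ lies in $\O_\beta$ for some $n\ge0$, hence in $f_{\beta,c}(\S_\beta)\cap f_{\beta,d}(\S_\beta)$ for some $c\ne d$; this yields one coding of $((d_{n+i}))_\beta$ beginning with $c$ and another beginning with $d$, and prepending $d_1\cdots d_n$ gives two distinct codings of $((d_i))_\beta$.

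For $\beta$ multinacci, say $\de(\beta)=(1^m0)^\infty$, the set $\S_\beta$ is in general a proper fractal subset of $\Delta_\beta$, so a point of $\S_\beta\cap\O_\beta$ need not lie in two of the pieces $f_{\beta,d}(\S_\beta)$ and the geometric reasoning above fails; instead I would argue symbolically from the lexicographic characterization of Proposition~\ref{prop:21}. Assume $(d_i)\notin\US_\beta$, so one of its three inequalities fails at some index $n$. If the first fails, then $d_n^1=0$ while $d_{n+1}^1d_{n+2}^1\cdots\succcurlyeq(1^m0)^\infty$; since the first $m$ entries of $(1^m0)^\infty$ are all $1$, this forces $d_{n+1}^1=\cdots=d_{n+m}^1=1$, i.e.\ $d_{n+1}=\cdots=d_{n+m}=B$ with $d_n\in\{A,C\}$. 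A failure of the second inequality forces, analogously, $d_{n+1}=\cdots=d_{n+m}=C$ with $d_n\in\{A,B\}$. A failure of the third means $d_n^\+=1$ while $d_{n+1}^\+d_{n+2}^\+\cdots\preccurlyeq\overline{\de(\beta)}=(0^m1)^\infty$, which forces $d_{n+1}^\+=\cdots=d_{n+m}^\+=0$, i.e.\ $d_{n+1}=\cdots=d_{n+m}=A$ with $d_n\in\{B,C\}$. Thus in all cases $(d_i)$ contains, at positions $n,\dots,n+m$, a block $XY^m$ with distinct digits $X:=d_n$ and $Y:=d_{n+1}$.

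It then remains to convert this block into a second coding, and here the defining relation $\sum_{i=1}^m\beta^{-i}=1$ of a multinacci base $\beta$ (equivalently, the greedy expansion of $1$ is $1^m0^\infty$) is exactly what is needed: for any $X,Y\in\A$, viewed as vectors in $\R^2$,
\[
\beta^{-n}\Bigl(X+\sum_{i=1}^m\beta^{-i}\,Y\Bigr)=\beta^{-n}(X+Y)=\beta^{-n}\Bigl(Y+\sum_{i=1}^m\beta^{-i}\,X\Bigr),
\]
so replacing the block $d_n\cdots d_{n+m}=XY^m$ by $YX^m$ and leaving all other symbols unchanged yields a sequence in $\A^\N$ that codes the same point $((d_i))_\beta$ but differs from $(d_i)$ at position $n$; hence $((d_i))_\beta$ is not univoque. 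Combined with $\U_\beta\subseteq\u_\beta$ this gives $\U_\beta=\u_\beta$. The case $\beta\le3/2$ is immediate once $\S_\beta=\Delta_\beta$ is known; the real content is the multinacci case, and the point I expect to require the most care is the observation that a failure of a \emph{single} lexicographic inequality in Proposition~\ref{prop:21} already forces a constant block of length exactly $m$ immediately after position $n$ — which is precisely what activates the length-$m$ digit swap $XY^m\leftrightarrow YX^m$, the short value identity above then finishing the argument.
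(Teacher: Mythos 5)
Your treatment of $\beta\in(1,3/2]$ is exactly the paper's argument. For multinacci $\beta$, however, the paper does not argue symbolically: it invokes \cite[Theorem 3.3]{Bro-Mon-Sid-04}, which gives $f_{\beta,d}(\Delta_\beta)\cap\S_\beta=f_{\beta,d}(\S_\beta)$ for each $d\in\A$, so the overlap identities $\O_A\cap\S_\beta=f_{\beta,B}(\S_\beta)\cap f_{\beta,C}(\S_\beta)$ (etc.) survive and the same geometric two-codings argument applies. Your attempt to replace this citation by a self-contained digit swap is a genuinely different route, but as written it has a real gap, not just a notational one.

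The core problem is an off-by-one that the argument cannot absorb. With the paper's convention $\de(\beta)=(1^m0)^\f$, the algebraic relation satisfied by $\beta$ is $\sum_{i=1}^{m+1}\beta^{-i}=1$ (the greedy expansion of $1$ is $1^{m+1}0^\f$, not $1^m0^\f$); for the Golden Ratio $m=1$ and the relation is $\beta^{-1}+\beta^{-2}=1$, whereas your identity $\sum_{i=1}^{m}\beta^{-i}=1$ would read $\beta^{-1}=1$. Hence $X+\sum_{i=1}^{m}\beta^{-i}Y=X+Y\bigl(1-\beta^{-(m+1)}\bigr)\neq X+Y$, and the swap $XY^m\leftrightarrow YX^m$ \emph{changes the value}: your two sequences code different points. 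The value-preserving swap is $XY^{m+1}\leftrightarrow YX^{m+1}$, but the failure of an inequality in Proposition \ref{prop:21} only forces a constant run of length $m$ after position $n$ — the $(m+1)$-st digit of $(1^m0)^\f$ is $0$, so $d_{n+m+1}$ is not forced — and so the longer swap need not be available. Concretely, at the Golden Ratio take $(d_i)=A(BC)^\f$: the first inequality fails at $n=1$ because $d_2^1d_3^1\cdots=(10)^\f=\de(\beta)$, the forced block is only $AB$, and no two consecutive equal digits ever occur. The coded point $(1/\beta,\,1/\beta^2)$ does have a second coding, namely $BCA^\f$, but it differs from $A(BC)^\f$ in infinitely many positions and is not produced by any local block replacement. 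To repair your approach you would need to (a) use the correct relation, (b) iterate the failure to either locate a run of length $m+1$ (where the corrected swap works) or conclude that the offending coordinate sequence equals $(1^m0)^\f$ exactly, and (c) supply a separate, non-local construction of a second coding in that exact-equality case. Step (c) is missing, so the multinacci half is not yet proved; the paper's citation of the Broomhead--Montaldi--Sidorov identity sidesteps all of this.
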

 \begin{proof}
 Observe that $\U_\beta\subseteq\u_\beta$ for all $\beta\in(1,2)$. Then it suffices to prove $\u_\beta\subseteq\U_\beta$ for $\beta\in(1,3/2]$ and for $\beta$ being a multinacci number.
If $\beta\in(1,3/2]$, then   $\S_\beta=\Delta_\beta$. In view of Figure \ref{Fig:5} it follows that 
 \begin{equation}\label{eq:overlap} 
 \begin{split}
 \O_A\cap\S_\beta&=f_{\beta, B}(\S_\beta)\cap f_{\beta,C}(\S_\beta),\\
 \O_B\cap\S_\beta&=f_{\beta, A}(\S_\beta)\cap f_{\beta,C}(\S_\beta),\\
  \O_C\cap\S_\beta&=f_{\beta, A}(\S_\beta)\cap f_{\beta,B}(\S_\beta).\\
 \end{split}
 \end{equation}
Therefore, 
  any point in $\O_\beta\cap\S_\beta=(\O_A\cup\O_B\cup\O_C)\cap\S_\beta$ has at least two codings. For example, any point in $\O_A\cap \S_\beta$  has at least two codings: one begins with digit $B$ and the other begins with digit $C$. Hence, $\u_\beta\subseteq\U_\beta$ if $\beta\in(1,3/2]$.
  
  If $\beta$ is a multinacci number,   then $\S_\beta$ is a proper subset of $\Delta_\beta$. But we still have (\ref{eq:overlap}) since by \cite[Theorem 3.3]{Bro-Mon-Sid-04}  
  \[
  f_{\beta, d}(\Delta_\beta)\cap \S_\beta=f_{\beta, d}(\S_\beta)\quad\textrm{for any }d\in\A.
  \]
  Again by the same argument as above we conclude that any point in $\O_\beta\cap\S_\beta$ has at least two codings. This proves $\u_\beta\subseteq\U_\beta$ for $\beta$ being a multinacci number, and completes the proof.
   \end{proof}

 \section{Description of $\U_\beta$ when $\S_\beta=\Delta_\beta$}\label{sec:S=D}
 In this section we will investigate the intrinsic univoque set $\u_\beta$ when $\S_\beta=\Delta_\beta$. As a result we give an alternate proof of Theorem \ref{th:sidorov}. In the second part of this section we prove  Theorem \ref{th:S=D} which answers a conjecture of Sidorov \cite{Sidorov_2007} affirmatively.
 
   Note that $\S_\beta=\Delta_\beta$ if and only if $\beta\in(1, 3/2]$. Furthermore, by Proposition \ref{lem:22} we have $\u_\beta=\U_\beta$ for any $\beta\in(1, 3/2]$. Hence, it suffices to describe the symbolic set $\US_\beta$ for $\beta\in(1, 3/2]$. Recall  that     $\beta_G\approx 1.46557$ is  the unique root in $(1, 2)$  of $x^3-x^2-1=0$. Then  
\[\de(\beta_G)=(100)^\f.\]
We will describe $\US_\beta$ for   $\beta\le \beta_G$ and $\beta\in(\beta_G, 3/2]$ separately. 
 
  \subsection{Description of $\US_\beta$ for $\beta\le\beta_G$}\label{s3}
 
  In this part we will investigate the intrinsic univoque set $\U_\beta$ for $\beta\le \beta_G$, and then give an alternate proof of Theorem \ref{th:sidorov}. We emphasize that   Sidorov proved this result in \cite{Sidorov_2007} in a dynamical way, while our proof is using combinatorics on words based on the lexicographical characterization of $\US_\beta$ described  in Proposition \ref{prop:21}.  
First we show that the symbolic set $\US_{\beta_G}$ does not contain any sequence with three consecutive distinct elements. 
 
 \begin{lemma}
 \label{lem:31}
 Let $(d_i)\in\US_{\beta_G}$. Then  any   three consecutive elements $d_n, d_{n+1}$ and $d_{n+2}$ can not be all  distinct. 
 \end{lemma}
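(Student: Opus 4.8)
The plan is to work directly from the lexicographical characterization in Proposition~\ref{prop:21}, specialized to $\beta=\beta_G$ where $\de(\beta_G)=(100)^\f$ and $\overline{\de(\beta_G)}=(011)^\f$. Suppose for contradiction that $(d_i)\in\US_{\beta_G}$ and there is an index $n$ with $d_n,d_{n+1},d_{n+2}$ all distinct, so $\set{d_n,d_{n+1},d_{n+2}}=\set{A,B,C}=\set{(0,0),(1,0),(0,1)}$ in some order. I would split into cases according to which permutation of $A,B,C$ occurs; by the symmetry exchanging the two coordinates (which swaps $B\leftrightarrow C$ and preserves the characterization), it suffices to handle three of the six orderings. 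The idea is that in each case the three conditions of Proposition~\ref{prop:21} — one for the first coordinate when a $0$ appears there, one for the second coordinate, and one for the sum-coordinate $d^\+$ when a $1$ appears there — will force incompatible constraints on the digits $d_{n+3},d_{n+4},\ldots$.

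\textbf{Carrying out the case analysis.} For a concrete instance, take $d_nd_{n+1}d_{n+2}=ABC=(0,0)(1,0)(0,1)$. Reading off coordinates: the first-coordinate string starting at $n$ is $0,1,0,\ldots$; since $d_n^1=0$, Proposition~\ref{prop:21} forces $d_{n+1}^1d_{n+2}^1\ldots=10\ldots \prec (100)^\f$, so we must have $d_{n+3}^1=0$ and in fact $d_{n+3}^1 d_{n+4}^1\cdots \prec 0\,(100)^\f$ forces more. Meanwhile $d_{n+1}^\+=1$, so the sum-coordinate string from $n+2$ onward must satisfy $d_{n+2}^\+ d_{n+3}^\+\cdots \succ (011)^\f$; since $d_{n+2}^\+=1$ this is automatic at the first position, but then $d_n^\+=0$ (from $d_n=A$) gives no sum-constraint, whereas $d_{n+2}=C$ has $d_{n+2}^2=1$... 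I would push each such chain of forced digits two or three steps until a string is forced to both start $\prec (100)^\f$ and its reflection forced $\prec(100)^\f$, which over a window of length $3$ is impossible for any digit in $\A$ — concretely, one coordinate being forced to be $0$ for three steps while the sum is forced to exceed $011\,011$ cannot both hold. The bookkeeping is: whenever $d_m^1=0$ we get a $\prec(100)^\f$ constraint on the first-coordinate tail; whenever $d_m^\+=1$ we get a $\succ(011)^\f$ constraint on the sum-tail; an $A$ in position $m$ triggers both coordinate constraints, a $B$ or $C$ triggers one coordinate constraint and the sum constraint.

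\textbf{The main obstacle.} The delicate point is that the constraints of Proposition~\ref{prop:21} are strict inequalities with the \emph{periodic} sequence $(100)^\f$, so a tail may agree with $(100)^\f$ for arbitrarily long before dropping below it; one cannot conclude a single digit is wrong from a one-step look-ahead. The hard part will be organizing the argument so that the contradiction emerges after a bounded number of steps regardless of how long the tails track the periodic pattern. I expect the clean way to do this is to observe that once $d_n,d_{n+1},d_{n+2}$ are all distinct, at least one of the three relevant tail-constraints (first coordinate, second coordinate, or sum) must be violated already at a position within the block $d_{n+1}d_{n+2}d_{n+3}$ — for instance, an $A$ followed immediately by another letter whose nonzero coordinate makes a coordinate-tail begin with $1$ and then, because the next letter is again constrained, produce $11$ or $101\,1$ exceeding $(100)^\f$. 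I would verify each of the three representative orderings this way, typically deriving that some length-$\le 4$ coordinate word is simultaneously $\succeq 100$ at its start and must be $\prec 100$, which is the contradiction. The symmetry reduction keeps the number of cases small, and no case should require looking more than four digits past $n$.
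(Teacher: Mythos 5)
Your overall strategy (reduce to three orderings by the coordinate swap $B\leftrightarrow C$ and then exploit the lexicographic characterization of Proposition \ref{prop:21} with $\de(\beta_G)=(100)^\f$) matches the paper's, and you correctly isolate the real difficulty: the constraints are \emph{strict} inequalities against a periodic sequence, so a tail can shadow $(100)^\f$ for arbitrarily long. But your proposed resolution of that difficulty is wrong. You claim that once $d_n,d_{n+1},d_{n+2}$ are all distinct, some constraint is already violated within a window of at most four digits past $n$. This is false: for instance every finite prefix of $(BAC)^\f$ is consistent with all three conditions of Proposition \ref{prop:21}. Its coordinate sequences are $(100)^\f$, $(001)^\f$ and $(101)^\f$, and the only failure is that a tail of the second coordinate \emph{equals} $\de(\beta_G)=(100)^\f$ rather than being strictly smaller --- a violation that no bounded look-ahead can detect. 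The same is true for $(ABC)^\f$, where a tail of the sum-coordinate equals $(011)^\f=\overline{\de(\beta_G)}$ exactly. So there is no length-$\le 4$ word that is ``simultaneously $\succcurlyeq 100$ at its start and must be $\prec 100$''; the block $BAC$ itself is not forbidden by any finite computation.

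The argument that actually closes the gap (and is what the paper does) is a forcing argument of unbounded length: starting from $d_nd_{n+1}d_{n+2}=BAC$, each triple of consecutive digits uniquely determines the next digit ($d_{n+3}=B$, $d_{n+4}=A$, $d_{n+5}=C$, and so on), because at each step the two alternative digits each create a finite pattern ($1010$, $011$, or $100$ in some coordinate or in the sum-coordinate) that genuinely does contradict Proposition \ref{prop:21}. Iterating forever forces $d_nd_{n+1}\ldots=(BAC)^\f$, and only then does the contradiction appear, from the tail equality $d_{n+2}^2d_{n+3}^2\ldots=(100)^\f=\de(\beta_G)$ against the required strict inequality. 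Your write-up never supplies this forcing step, and the shortcut you substitute for it cannot work, so the proof as proposed does not go through.
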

\begin{proof}
Suppose there exists $n\ge 1$ such that $d_n, d_{n+1}$ and  $d_{n+2}$ are  distinct. Then 
\begin{equation}\label{eq:31}
d_nd_{n+1}d_{n+2}\in\set{BAC, CBA, ACB; ABC, CAB, BCA}.
\end{equation}
  Since the proofs for different cases are similar, we may assume   $d_nd_{n+1}d_{n+2}=BAC$. 

For convenience we represent the block $d_nd_{n+1}d_{n+2}=BAC$ as three points located from the left to the right as in Figure \ref{Fig:6}. Then any point in the first row has its first coordinate equaling $1$, and any point in the third row has its second coordinate equaling $1$. While any point in the middle row has both   coordinates equaling $0$. 
   \begin{figure}[h!]
\begin{center}
\begin{tikzpicture}[
    scale=6,
    axis/.style={very thick, ->},
    important line/.style={thick},
    dashed line/.style={dashed, thin},
    pile/.style={thick, ->, >=stealth', shorten <=2pt, shorten
    >=2pt},
    every node/.style={color=black}
    ]
    \draw[axis] (0,0)  -- (1.5,0) node(xline)[right]
        {};
        \node[] at (-0.2,0){$A=(0,0)$};
        
      \node[] at (-0.2, 0.2){$B=(1,0)$};
         \node[] at (-0.2,-0.2){$C=(0,1)$};
     
 \draw [fill] (0,0.2) circle [radius=0.01];
 \node[]at(0,0.15){$d_n$};
  \draw [fill] (0.2,0) circle [radius=0.01];
   \node[]at(0.2,-0.05){$d_{n+1}$};
   \draw [fill] (0.4,-0.2) circle [radius=0.01];
    \node[]at(0.4,-0.25){$d_{n+2}$};
      
         \draw [fill] (0.6,0.2) circle [radius=0.01];
    \node[]at(0.6,0.15){$d_{n+3}$};
      \draw [fill] (0.8,0) circle [radius=0.01];
   \node[]at(0.8,-0.05){$d_{n+4}$};
   \draw [fill] (1.0,-0.2) circle [radius=0.01];
    \node[]at(1.0,-0.25){$d_{n+5}$};
      \end{tikzpicture} 
\end{center}
\caption{The presentation of the block $d_n\ldots d_{n+5}=BACBAC$.}\label{Fig:6}
\end{figure}
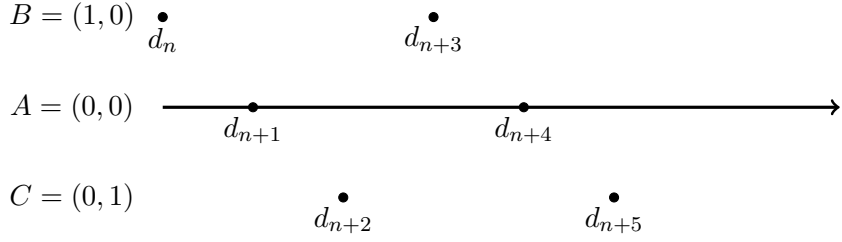

First we claim that $d_{n+3}=B$. Note that $d_nd_{n+1}d_{n+2}=BAC$. If $d_{n+3}=A$, then  $d_{n}^\+\ldots d_{n+3}^\+=1010$. This gives 
\[d_n^\+=1\quad\textrm{and}\quad d_{n+1}^\+d_{n+2}^\+\ldots\prec (011)^\f=\overline{\de(\beta_G)}.\]
 By Proposition \ref{prop:21} it follows that $(d_i)\notin\US_{\beta_G}$. If $d_{n+3}=C$, then $d_{n+1}^2d_{n+2}^2d_{n+3}^2=011$ which yields $d_{n+1}^2=0$ and $d_{n+2}^2d_{n+3}^2\ldots\succ(100)^\f=\de(\beta)$. Again by Proposition \ref{prop:21} we have $(d_i)\notin\US_{\beta_G}$. Therefore, $d_{n+3}=B$. 
 
 Next we claim $d_{n+4}=A$. Note that $d_{n+1}d_{n+2} d_{n+3}=ACB$. Then $d_{n+2}^1d_{n+3}^1d_{n+4}^1=011$ if $d_{n+4}=B$, and  $d_{n+1}^2\ldots d_{n+4}^2=0101$ if $d_{n+4}=C$. In both cases we can deduce by Proposition \ref{prop:21} that $(d_i)\notin\US_{\beta_G}$. So, $d_{n+4}=A$.
 
 Now we claim $d_{n+5}=C$.  Observe that $d_{n+2}d_{n+3}d_{n+4}=CBA$. If $d_{n+5}=B$, then $d_{n+2}^1\ldots d_{n+5}^1=0101$. If $d_{n+5}=A$, then $d_{n+3}^\+d_{n+4}^\+d_{n+5}^\+=100$. In both cases we can infer from Proposition \ref{prop:21} that $(d_i)\notin\US_{\beta_G}$. Hence, $d_{n+5}=C$. 
 
 Observe that in the above arguments each three consecutive  block $d_{m+1}d_{m+2}d_{m+3}$ uniquely determines the next digit $d_{m+4}$.
Repeating this procedure indefinitely one can show that   \[d_nd_{n+1}\ldots=(BAC)^\f.\]
 Then $d_{n+1}^2=0$ and $d_{n+2}^2d_{n+3}^2\ldots=(100)^\f=\de(\beta_G)$. However, by Proposition \ref{prop:21} this implies that $(d_i)\notin\US_{\beta_G}$, leading to contradiction. Hence, any three consecutive elements of $(d_i)\in\US_{\beta_G}$ can not be all distinct. 
\end{proof}
 \begin{proof}
 [An alternate proof of Theorem \ref{th:sidorov}]
 By Proposition \ref{prop:21} it follows that for any $\beta\in(1, \beta_G]$ the sequences $A^\f, B^\f$ and $C^\f$ all belong to $\US_\beta$. Observe  by Proposition \ref{lem:22} that $\u_\beta=\U_\beta$ for $\beta\in(1, \beta_G]$, and by Lemma \ref{lem:delta-beta} and Proposition \ref{prop:21} that the set-valued map $\beta\mapsto\US_{\beta}$ is increasing on $(1, \beta_G]$. So it suffices to show that $\US_{\beta_G}$ consists of the three sequences $A^\f, B^\f$ and $C^\f$. 
 
 Suppose on the contrary that there exists $(d_i)\in\US_{\beta_G}\setminus\set{A^\f, B^\f, C^\f}$. Then there is an integer  $n\ge 1$ such that $d_{n+1}\ne d_n$. So,
 \[
 d_nd_{n+1}\in\set{AB, AC, BA, BC, CA, CB}.
 \]
Since the proofs for different cases are similar, without loss of generality we may assume $d_nd_{n+1}=AB$.  By Lemma \ref{lem:31} the next element $d_{n+2}$ must be equal to $A$ or $B$. We will finish the proof by showing that  $(d_i)\notin\US_{\beta_G}$.  

Case I. $d_{n+2}=A$. By Lemma \ref{lem:31} we have   $d_{n+3}\in\set{A, B}$. Then  
\[
d_n\ldots d_{n+3}=ABAA\quad\textrm{or} \quad d_n\ldots d_{n+3}=ABAB.
\]
In the first case we have $d_{n+1}^\+d_{n+2}^\+d_{n+3}^\+=100$, and in the second case we have $d_n^1\ldots d_{n+3}^1=0101$. In both cases, we can deduce by Proposition \ref{prop:21}   that $(d_i)\notin\US_{\beta_G}$. 

Case II. $d_{n+2}=B$. Then $d_nd_{n+1}d_{n+2}=ABB$, which gives $d_n^1 d_{n+1}^1 d^1_{n+2}=011$. By Proposition \ref{prop:21} this again yields $(d_i)\notin\US_{\beta_G}$. 
 \end{proof}

 \subsection{Description of $\US_\beta$ for $\beta_G<\beta\le 3/2$}\label{s4}
Let $\beta_2\approx1.5385\in(\beta_G, 2)$ such that 
 \[
 \de(\beta_2)= (101000)^\f.
 \]
 Then  $3/2\in(\beta_G, \beta_2)$. By  Theorem \ref{th:sidorov} it suffices to describe the difference set $\US_{3/2}\setminus\US_{\beta_G}$. 
 
 \begin{proposition}
 \label{prop:41}
Let $\beta\in(\beta_G,\beta_2]$. Then any sequence in  $\US_{\beta}\setminus\US_{\beta_G}$ must end with    $(ABC)^\f$ or $(CBA)^\f$. 
 \end{proposition}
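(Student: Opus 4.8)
The plan is to argue combinatorially, mirroring the pattern-propagation method used in Lemma \ref{lem:31} and the alternate proof of Theorem \ref{th:sidorov}, but now with the larger alphabet of admissible patterns that $\de(\beta_2)=(101000)^\f$ permits. Fix $\beta\in(\beta_G,\beta_2]$ and take a sequence $(d_i)\in\US_\beta\setminus\US_{\beta_G}$. By Lemma \ref{lem:delta-beta} the map $\beta\mapsto\de(\beta)$ is increasing, so $\de(\beta)\lle\de(\beta_2)=(101000)^\f$; by Remark \ref{re:22} the map $\beta\mapsto\US_\beta$ is increasing, hence $(d_i)\in\US_{\beta_2}$, and it suffices to work with $\beta_2$ and the word constraints from Proposition \ref{prop:21} with $\de(\beta_2)=(101000)^\f$ and $\overline{\de(\beta_2)}=(010111)^\f$. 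Since $(d_i)\notin\US_{\beta_G}$, the alternate proof of Theorem \ref{th:sidorov} (or rather its method) tells us that $(d_i)$ must violate at least one of the $\beta_G$-inequalities at some index, i.e.\ there is a ``$\beta_G$-forbidden'' pattern somewhere in $(d_i)$; the shift-invariance of $\US_\beta$ lets us place it near the front. Concretely, the existence of such a pattern combined with the $\beta_2$-admissibility will force, at some position $n$, a block such as $d_n d_{n+1} d_{n+2}=BAC$ (or one of its images under the symmetries of $\A$ and under the reflection $d\mapsto$ swap-coordinates), since these are precisely the blocks that are $\beta_2$-admissible but $\beta_G$-forbidden — this is where $(101000)^\f$ enters, as it is exactly the threshold allowing three-consecutive-distinct blocks.

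Next I would show that once such a block $BAC$ appears, the $\beta_2$-admissibility conditions of Proposition \ref{prop:21} propagate it deterministically to the right, forcing $d_n d_{n+1} d_{n+2}\ldots=(BAC)^\f$ (and symmetrically $(CBA)^\f$ in the reflected case). The mechanism is the one already exhibited in Lemma \ref{lem:31}: each three-consecutive block determines the next digit. The difference is that here the coordinate sequences generated by $(BAC)^\f$ are $(d_i^1)=(100)^\f$, $(d_i^2)=(001)^\f$ and $(d_i^\+)=(101)^\f$, and one checks directly that $(100)^\f\prec(101000)^\f=\de(\beta_2)$ at every shift with first coordinate $0$, that $(001)^\f\prec\de(\beta_2)$ at every shift with second coordinate $0$, and that $(101)^\f\succ(010111)^\f=\overline{\de(\beta_2)}$ at every shift with $d^\+=1$; so $(BAC)^\f$ is genuinely admissible for all $\beta\in(\beta_G,\beta_2]$, and it is the \emph{unique} admissible continuation of $BAC$. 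The case analysis — ruling out every alternative digit at each step by exhibiting a forbidden sub-pattern of length $\le 6$ for the three projections — is routine but needs care; I would organize it as: (a) $d_n d_{n+1} d_{n+2}=BAC$ forces $d_{n+3}=B$; (b) $ACB$ forces $d_{n+4}=A$; (c) $CBA$ forces $d_{n+5}=C$; then periodicity by induction.

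Finally, I would handle the ``left side'': the above shows the tail of $(d_i)$ from index $n$ on is $(BAC)^\f$ (or $(CBA)^\f$), which is exactly the conclusion ``ends with $(ABC)^\f$ or $(CBA)^\f$'' up to recording which of the three cyclic phases the tail enters — note $(BAC)^\f$, $(ACB)^\f$, $(CBA)^\f$ are the same eventual sequence, so the statement as phrased (``end with $(ABC)^\f$ or $(CBA)^\f$'') is the content. The main obstacle I anticipate is bookkeeping: one must verify that the \emph{only} way a $\beta_2$-admissible sequence can fail to be $\beta_G$-admissible is by containing (a shifted image of) $BAC$ or its coordinate-swap $CAB$, and conversely that no other ``new'' admissible pattern survives — this requires enumerating the short patterns that are $\lge\de(\beta_G)$-type forbidden but $\lle\de(\beta_2)$-type allowed, and checking each propagates either to $(BAC)^\f$/$(CBA)^\f$ or else loops back into a $\beta_G$-admissible sequence (contradicting $(d_i)\notin\US_{\beta_G}$). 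The symmetries of the equilateral triangle (the $S_3$ action permuting $A,B,C$, realized on codings) plus the reflection swapping the two coordinates cut this enumeration down by a factor of roughly $6$, so in practice one treats the single representative block $BAC$ and invokes symmetry.
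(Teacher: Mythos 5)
Your overall plan (reduce to $\beta_2$ by monotonicity, locate a ``new'' block, propagate it via the lexicographic conditions of Proposition \ref{prop:21}, and cut the case analysis down by symmetry) has the right shape, but there is a genuine gap at exactly the point you dismiss as ``routine but needs care''. Since $\US_{\beta_G}=\set{A^\f,B^\f,C^\f}$, a sequence in $\US_{\beta_2}\setminus\US_{\beta_G}$ is simply a non-constant admissible sequence; after its first change $d_m\ne d_{m+1}$, the no-$cdd$ rule (Lemma \ref{l42}) forces all subsequent consecutive letters to differ, so every later triple is either all-distinct (a $3$-cycle step) or a ``returning'' triple $xyx$ from $\set{BAB, ABA, ACA, CAC, BCB, CBC}$. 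The entire content of the proposition is the exclusion of these $xyx$ triples, and your proposal never isolates that case: your steps (a)--(c) propagating $BAC$ must at each stage rule out an $xyx$ alternative (e.g.\ the continuation $BACA$ contains $ACA$ preceded by $B$), and your claim that each exclusion comes from ``a forbidden sub-pattern of length $\le 6$'' is false at the endpoint $\beta=\beta_2$. What actually happens (the paper's Lemma \ref{lem:43}) is that an $xyx$ triple preceded by a different letter forces the \emph{infinite} period-$6$ continuation $(BABCAC)^\f$, $(ABACBC)^\f$ or $(ACABCB)^\f$, whose relevant projection equals $\de(\beta_2)=(101000)^\f$ or $\overline{\de(\beta_2)}=(010111)^\f$ \emph{exactly}; the violation of the strict inequalities in Proposition \ref{prop:21} therefore appears only in the infinite limit and is certified by no finite prefix, so no bounded forbidden pattern can close this case at $\beta_2$ (it would for $\beta<\beta_2$, but the proposition includes the endpoint).

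A second, smaller confusion: your fallback that an unwanted pattern might ``loop back into a $\beta_G$-admissible sequence, contradicting $(d_i)\notin\US_{\beta_G}$'' is not a usable contradiction, because $\US_{\beta_G}$ contains only the three constant sequences, and a sequence containing some constant block contradicts nothing. The paper's argument avoids both problems: assume the tail is neither constant nor a $3$-cycle; then after a change some $xyx$ occurs; Lemma \ref{lem:43}, applied back and forth, pins the entire tail to a period-$6$ word that saturates the lexicographic bound, which is the contradiction. If you reorganize your proof around the dichotomy ``all-distinct triple versus $xyx$ triple'' and prove the forced period-$6$ continuation of the $xyx$ triples, you essentially recover the paper's proof.
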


 First we show that for $\beta\le \beta_2$ any   block of the form `$cdd$' can not occur in sequences of $\US_\beta$. In the folowing we prove this for $\beta$ in a larger range.
 
 \begin{lemma}\label{l42}
Let $\beta\in(1, 2)$.  If $\de(\beta)$ begins with $10$, then any block from the following set
\[
\mathcal F=\set{BAA, BCC, ABB, ACC, CBB, CAA}
\]
is forbidden in $\US_\beta$.
 \end{lemma}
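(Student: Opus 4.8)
The plan is to read each forbidden pattern directly off the lexicographic characterization of $\US_\beta$ in Proposition \ref{prop:21}, using only the first two digits of $\de(\beta)$. Since $\de(\beta)$ begins with $10$, its reflection $\overline{\de(\beta)}$ begins with $01$; recall also that $\prec$ denotes a \emph{strict} lexicographic inequality, so a disagreement already in the second coordinate is decisive. I argue by contradiction: suppose some $(d_i)\in\US_\beta$ contains a block $d_nd_{n+1}d_{n+2}=cdd$ with $cdd\in\mathcal F$.

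The mechanism is that the repeated digit $dd$ forces a constant length-two string in one of the three projection sequences $(d_i^1)$, $(d_i^2)$, $(d_i^\+)$ coming from $A=(0,0)$, $B=(1,0)$, $C=(0,1)$, and this string is incompatible with what Proposition \ref{prop:21} permits right after the digit $c$. Using the coordinate-swap symmetry $(x,y)\mapsto(y,x)$, which conjugates the IFS to itself, interchanges $B\leftrightarrow C$, fixes $A$, preserves $\US_\beta$, and maps $\mathcal F$ onto itself, it suffices to treat $ABB$, $CBB$ and $BAA$. For $cdd\in\set{ABB,CBB}$ the digit $c\in\set{A,C}$ has first coordinate $0$, so $d_n^1=0$, whereas $dd=BB$ gives $d_{n+1}^1d_{n+2}^1=11$; since $\de(\beta)$ begins with $10$ this yields $d_{n+1}^1d_{n+2}^1\ldots\succ\de(\beta)$, contradicting the first line of Proposition \ref{prop:21}. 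For $cdd=BAA$ the digit $c=B$ has $c^\+=1$, so $d_n^\+=1$, whereas $dd=AA$ gives $d_{n+1}^\+d_{n+2}^\+=00$; since $\overline{\de(\beta)}$ begins with $01$ this yields $d_{n+1}^\+d_{n+2}^\+\ldots\prec\overline{\de(\beta)}$, contradicting the third line of Proposition \ref{prop:21}. The remaining members $ACC$, $BCC$, $CAA$ of $\mathcal F$ follow by the symmetry (equivalently, for $ACC$ and $BCC$ one repeats the argument with $d_n^2=0$ and the second line of Proposition \ref{prop:21}).

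I do not anticipate any real obstacle: the statement is a short finite case check, and the only care needed is to record correctly the projections $d^1$, $d^2$, $d^\+$ of the digits $A$, $B$, $C$ and to invoke the strictness of $\prec$ so that a length-two mismatch suffices. In the write-up I would group the six blocks of $\mathcal F$ according to which of the three inequalities of Proposition \ref{prop:21} they violate, so that the proof fits in a few lines.
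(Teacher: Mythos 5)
Your proposal is correct and follows essentially the same route as the paper: both argue by contradiction from the lexicographic characterization in Proposition \ref{prop:21}, observing that the repeated digit forces a length-two projection ($11$ against the first two digits $10$ of $\de(\beta)$, or $00$ against the first two digits $01$ of $\overline{\de(\beta)}$) that immediately violates one of the three inequalities; the paper writes out only the $BAA$ case via the $\oplus$-projection and declares the others similar, while you additionally organize the remaining cases by the $B\leftrightarrow C$ coordinate-swap symmetry, which is a harmless packaging difference.
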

 \begin{proof}
 Since the proofs for different cases  are similar, without loss of generality we only prove that   $BAA$ is forbidden in $\US_\beta$.
 Suppose on the contrary there exists a sequence $(d_i)\in\US_\beta$ such that $d_nd_{n+1}d_{n+2}=BAA$ for some $n\ge 1$.  Then $d_n^\+d_{n+1}^\+d_{n+2}^\+ =100$. This implies $d_n^\+=1$ and $d_{n+1}^\+d_{n+2}^\+\ldots\prec\overline{\de(\beta)}.$
 By Proposition \ref{prop:21}  we have $(d_i)\notin\US_\beta$, leading to a contradiction. So $BAA$ is forbidden in  $\US_\beta$.
 \end{proof}
 
 In order to prove Proposition \ref{prop:41} we  also need the following lemma.
 \begin{lemma}
 \label{lem:43}
 Let $\beta\in(1,2)$ such that  $\de(\beta)$ begins with $101000$. Suppose $(d_i)\in\US_{\beta}$.
 \begin{enumerate}[{\rm(i)}]
 \item If   $d_{m+1}d_{m+2}d_{m+3}=BAB$ and $d_m\ne d_{m+1}$, then the next block $d_{m+4}d_{m+5}d_{m+6}=CAC$. 
 \item If  $d_{m+1}d_{m+2}d_{m+3}=CAC$ and $d_m\ne d_{m+1}$, then the next block $d_{m+4}d_{m+5}d_{m+6}=BAB$. 
\item If   $d_{m+1}d_{m+2}d_{m+3}=ABA$ and $d_m\ne d_{m+1}$, then the next block $d_{m+4}d_{m+5}d_{m+6}=CBC$. 
\item If  $d_{m+1}d_{m+2}d_{m+3}=CBC$ and $d_m\ne d_{m+1}$, then the next block $d_{m+4}d_{m+5}d_{m+6}=ABA$. 
\item If   $d_{m+1}d_{m+2}d_{m+3}=ACA$ and $d_m\ne d_{m+1}$, then the next block $d_{m+4}d_{m+5}d_{m+6}=BCB$. 
\item If   $d_{m+1}d_{m+2}d_{m+3}=BCB$ and $d_m\ne d_{m+1}$, then the next block $d_{m+4}d_{m+5}d_{m+6}=ACA$. 
 \end{enumerate}
 \end{lemma}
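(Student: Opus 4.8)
The six assertions (i)--(vi) differ only by a relabelling of the digits, so the plan is to prove (i) and deduce the rest from a symmetry of Proposition \ref{prop:21}. That proposition characterises $\US_\beta$ by imposing on the three $\set{0,1}$-sequences $(d_i^1)$, $(d_i^2)$ and $(\overline{d_i^\+})$ attached to $(d_i)\in\A^\N$ one and the same rule: whenever the $n$-th entry is $0$, the tail from $n+1$ on is $\prec\de(\beta)$ (for the third sequence this is the stated condition $d_{n+1}^\+d_{n+2}^\+\ldots\succ\overline{\de(\beta)}$ whenever $d_n^\+=1$, rewritten via reflection). Since $A,B,C$ correspond under $d\mapsto(d^1,d^2,\overline{d^\+})$ to the vectors $(0,0,1),(1,0,0),(0,1,0)$, permuting these three sequences permutes $\set{A,B,C}$, fixes $\de(\beta)$, and preserves both the forbidden set $\mathcal F$ of Lemma \ref{l42} (all blocks $xyy$ with $x\ne y$) and the hypotheses of Lemma \ref{lem:43}; thus each of (ii)--(vi) is the image of (i) under one of these six relabellings, and it suffices to prove (i).

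\textbf{Proof of (i).} Suppose $(d_i)\in\US_\beta$ with $d_{m+1}d_{m+2}d_{m+3}=BAB$ and $d_m\ne d_{m+1}=B$. Then $d_m\in\set{A,C}$, so $d_m^1=0$, and the first-coordinate condition of Proposition \ref{prop:21} at $n=m$ gives $d_{m+1}^1d_{m+2}^1d_{m+3}^1\ldots\prec\de(\beta)$. As $d_{m+1}^1d_{m+2}^1d_{m+3}^1=101$ and $\de(\beta)$ begins with $101000$, comparing entry by entry forces $d_{m+4}^1=d_{m+5}^1=d_{m+6}^1=0$, hence $d_{m+4},d_{m+5},d_{m+6}\in\set{A,C}$. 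Now pin these three letters down using Lemma \ref{l42}. If $d_{m+4}=A$, then $BAA\in\mathcal F$ applied to $d_{m+3}d_{m+4}d_{m+5}$ forces $d_{m+5}=C$, and then $ACC\in\mathcal F$ applied to $d_{m+4}d_{m+5}d_{m+6}$ forces $d_{m+6}=A$; so $d_{m+1}\ldots d_{m+6}=BABACA$, whence $d_{m+2}^\+d_{m+3}^\+d_{m+4}^\+d_{m+5}^\+d_{m+6}^\+\ldots$ begins with $01010$, whereas $\overline{\de(\beta)}$ begins with $01011$, contradicting the third condition of Proposition \ref{prop:21} at $n=m+1$ (where $d_{m+1}^\+=1$). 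Hence $d_{m+4}=C$. Then $BCC\in\mathcal F$ applied to $d_{m+3}d_{m+4}d_{m+5}=BCd_{m+5}$ forces $d_{m+5}=A$, and $CAA\in\mathcal F$ applied to $d_{m+4}d_{m+5}d_{m+6}=CAd_{m+6}$ forces $d_{m+6}=C$; that is, $d_{m+4}d_{m+5}d_{m+6}=CAC$, as required. Feeding this through the $S_3$-symmetry of the first paragraph yields (ii)--(vi).

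\textbf{Where the difficulty lies.} The only non-routine step is ruling out $d_{m+4}=A$: one must propagate the two hypotheses far enough to produce the forced block $BABACA$ and then spot the mismatch between its sum sequence $d_{m+2}^\+d_{m+3}^\+\ldots$ (beginning $01010$) and $\overline{\de(\beta)}$ (beginning $01011$) at the fifth symbol. This, together with the deduction $d_{m+4},d_{m+5},d_{m+6}\in\set{A,C}$, is also the only place where the hypothesis $d_m\ne d_{m+1}$ enters, via $d_m^1=0$. Everything else is a mechanical application of Proposition \ref{prop:21} and Lemma \ref{l42}; the care required is purely bookkeeping---using exactly the prefix $101000$ of $\de(\beta)$ (equivalently $010111$ of $\overline{\de(\beta)}$) and tracking which of the three coordinate sequences is invoked at which index, since within a single sequence the three letters play asymmetric roles.
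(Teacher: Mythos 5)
Your proof of the core case (i) is correct and follows essentially the same route as the paper: use $d_m^1=0$ together with the prefix $101000$ of $\de(\beta)$ to force $d_{m+4}^1d_{m+5}^1d_{m+6}^1=000$, whittle the candidates in $\set{A,C}^3$ down to $ACA$ or $CAC$ via the forbidden blocks of Lemma \ref{l42}, and then eliminate $ACA$ by observing that it would make $d_{m+2}^\+d_{m+3}^\+\ldots$ begin with $01010\prec 01011\ldots=\overline{\de(\beta)}$ while $d_{m+1}^\+=1$, contradicting Proposition \ref{prop:21}. This is exactly the paper's argument for (i).

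Where you genuinely depart from the paper is in handling the remaining five cases. The paper writes out (i), (iii) and (iv) separately (with slightly different coordinate bookkeeping in each) and declares (ii), (v), (vi) ``similar,'' whereas you observe that under $d\mapsto(d^1,d^2,\overline{d^\+})$ the letters $A,B,C$ become the three unit vectors $(0,0,1),(1,0,0),(0,1,0)$ and that the three conditions of Proposition \ref{prop:21} are one and the same rule applied to the three test sequences; hence any permutation of these sequences induces a permutation of $\A$ preserving $\US_\beta$, the set $\mathcal F$, and the hypotheses, and each of (ii)--(vi) is the image of (i) under an explicit element of $S_3$. This is a small but genuine improvement: it replaces three near-duplicate computations (plus three appeals to similarity) with a single rigorous reduction, and the same symmetry could be reused elsewhere in the paper (e.g.\ to shorten the case analyses in Sections 4 and 5). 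The only cost is that one must check, as you do, that the third condition of Proposition \ref{prop:21} really is the reflected form of the first two, which is immediate since reflection reverses the lexicographic order.
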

    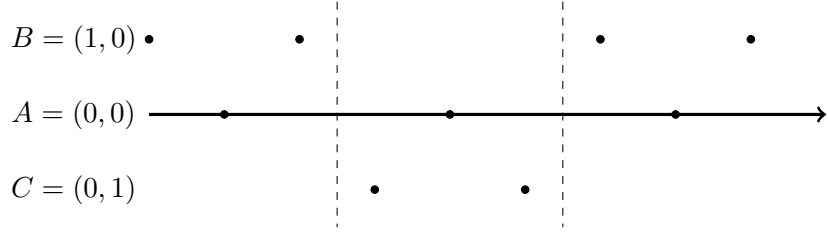
\begin{figure}[h!]
\begin{center}
\begin{tikzpicture}[
    scale=5,
    axis/.style={very thick, ->},
    important line/.style={thick},
    dashed line/.style={dashed, thin},
    pile/.style={thick, ->, >=stealth', shorten <=2pt, shorten
    >=2pt},
    every node/.style={color=black}
    ]
    \draw[axis] (0,0)  -- (1.8,0) node(xline)[right]
        {};
        \node[] at (-0.2,0){$A=(0,0)$};
        
      \node[] at (-0.2, 0.2){$B=(1,0)$};
         \node[] at (-0.2,-0.2){$C=(0,1)$};
     
 \draw [fill] (0,0.2) circle [radius=0.01];
 
  \draw [fill] (0.2,0) circle [radius=0.01];
 
   \draw [fill] (0.4,0.2) circle [radius=0.01];
 
      \draw[dashed line] (0.5, 0.3)-- (0.5,-0.3); 
  
         \draw [fill] (0.6,-0.2) circle [radius=0.01];
 
      \draw [fill] (0.8,0) circle [radius=0.01];
 
   \draw [fill] (1.0,-0.2) circle [radius=0.01];
    \draw[dashed line] (1.1, 0.3)-- (1.1,-0.3);
   
      \draw [fill] (1.2,0.2) circle [radius=0.01];
 
      \draw [fill] (1.4,0) circle [radius=0.01];
 
   \draw [fill] (1.6,0.2) circle [radius=0.01];
 
      \end{tikzpicture} 
\end{center}
\caption{Type-$A$ presentation: $A\mapsto A, B\mapsto C, C\mapsto B$. The first and second columns (i) $BAB \mapsto CAC$; the second and third columns (ii) $CAC \mapsto BAB$.}\label{figure:4}
\end{figure}
    \begin{figure}[h!]
\begin{center}
\begin{tikzpicture}[
    scale=5,
    axis/.style={very thick, ->},
    important line/.style={thick},
    dashed line/.style={dashed, thin},
    pile/.style={thick, ->, >=stealth', shorten <=2pt, shorten
    >=2pt},
    every node/.style={color=black}
    ]
    \draw[axis] (0,0.2)  -- (1.8,0.2) node(xline)[right]
        {};
        \node[] at (-0.2,0){$A=(0,0)$};
        
      \node[] at (-0.2, 0.2){$B=(1,0)$};
         \node[] at (-0.2,-0.2){$C=(0,1)$};
     
 \draw [fill] (0,0) circle [radius=0.01];
 
  \draw [fill] (0.2,0.2) circle [radius=0.01];
 
   \draw [fill] (0.4,0) circle [radius=0.01];
 
      \draw[dashed line] (0.5, 0.3)-- (0.5,-0.3);

         \draw [fill] (0.6,-0.2) circle [radius=0.01];
 
      \draw [fill] (0.8,0.2) circle [radius=0.01];
 
   \draw [fill] (1.0,-0.2) circle [radius=0.01];
    \draw[dashed line] (1.1, 0.3)-- (1.1,-0.3);

      \draw [fill] (1.2,0) circle [radius=0.01];
 
      \draw [fill] (1.4,0.2) circle [radius=0.01];
 
   \draw [fill] (1.6,0) circle [radius=0.01];
 
      \end{tikzpicture} 
\end{center}
\caption{Type-$B$ presentation: $A\mapsto C, B\mapsto B, C\mapsto A$. The first and second columns (iii) $ABA \mapsto CBC$; the second and third columns (iv) $CBC \mapsto ABA$.}\label{figure:5}
\end{figure}
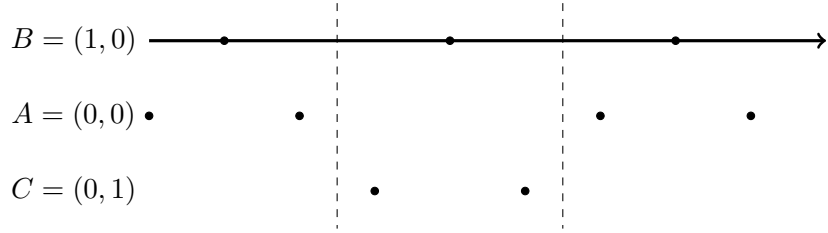

  \begin{figure}[h!]
\begin{center}
\begin{tikzpicture}[
    scale=5,
    axis/.style={very thick, ->},
    important line/.style={thick},
    dashed line/.style={dashed, thin},
    pile/.style={thick, ->, >=stealth', shorten <=2pt, shorten
    >=2pt},
    every node/.style={color=black}
    ]
    \draw[axis] (0,-0.2)  -- (1.8,-0.2) node(xline)[right]
        {};
        \node[] at (-0.2,0){$A=(0,0)$};
        
      \node[] at (-0.2, 0.2){$B=(1,0)$};
         \node[] at (-0.2,-0.2){$C=(0,1)$};
     
 \draw [fill] (0,0) circle [radius=0.01];
 
  \draw [fill] (0.2,-0.2) circle [radius=0.01];
 
   \draw [fill] (0.4,0) circle [radius=0.01];
 
      \draw[dashed line] (0.5, 0.3)-- (0.5,-0.3);

         \draw [fill] (0.6,0.2) circle [radius=0.01];
 
      \draw [fill] (0.8,-0.2) circle [radius=0.01];
 
   \draw [fill] (1.0,0.2) circle [radius=0.01];
    \draw[dashed line] (1.1, 0.3)-- (1.1,-0.3);

      \draw [fill] (1.2,0) circle [radius=0.01];
 
      \draw [fill] (1.4,-0.2) circle [radius=0.01];
 
   \draw [fill] (1.6,0) circle [radius=0.01];
 
      \end{tikzpicture} 
\end{center}
\caption{Type-$C$ presentation: $A\mapsto B, B\mapsto A, C\mapsto C$.  The first and second columns (v) $ACA \mapsto BCB$; the second and third columns (vi) $BCB \mapsto ACA$.}\label{figure:6}
\end{figure}
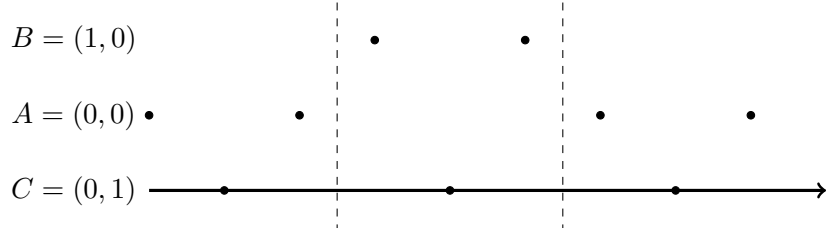

Before giving the proof we first explain (i)--(vi) via Figures \ref{figure:4}--\ref{figure:6}. Firstly,  (i) and (ii) are represented in Figure \ref{figure:4} which implies that if $(d_i)\in\US_\beta$ begins with $BAB$, then $(d_i)=(BABCAC)^\f$.  Secondly,   (iii) and (iv) are illustrated in Figure \ref{figure:5} which yields that if $(d_i)\in\US_\beta$ begins with $ABA$, then $(d_i)=(ABACBC)^\f$.  Finally,  (v) and (vi) are described in Figure \ref{figure:6} which gives that if $(d_i)\in\US_\beta$ begins with $ACA$,  then $(d_i)=(ACABCB)^\f$.  The names `Type $A$, Type-$B$' and `Type-$C$'  will be clarified in the next section.

 \begin{proof}
 Let $(d_i)\in\US_\beta$. Since the proof of (ii) is similar to (i), the proof of (v) is similar to (iii), and the proof of (vi) is similar to (iv), we only prove the lemma for cases (i), (iii) and (iv).
 
 (i). Suppose $d_{m+1}d_{m+2} d_{m+3}=BAB$ and $d_m\ne B$ (see Figure \ref{figure:4}). Then $d_m^1\ldots d_{m+3}^1=0101$. Note that  $\de(\beta)$ begins with $101000$. By Proposition \ref{prop:21} it follows that $d_{m+4}^1d_{m+5}^1 d_{m+6}^1=000$ which implies $d_{m+4}d_{m+5}d_{m+6}\in\set{A, C}^3$. Then by Lemma \ref{l42} we have only    two choices: either $d_{m+4}d_{m+5}d_{m+6}=ACA$ or $d_{m+4}d_{m+5}d_{m+6}=CAC$. 
 
 If $d_{m+4}d_{m+5}d_{m+6}=ACA$, then 
$d_{m+1}^\+ \ldots d_{m+6}^\+=101010.$ Observe that $\overline{\de(\beta)}$ begins with $010111$. 
 This implies that $d_{m+1}^\+=1$ and $d_{m+2}^\+d_{m+3}^\+\ldots\prec\overline{\de(\beta)}$. By Proposition \ref{prop:21} we have $(d_i)\notin\US_\beta$. Hence, $d_{m+4}d_{m+5}d_{m+6}=CAC$ as required.

 (iii). Suppose $d_{m+1}d_{m+2}d_{m+3}=ABA$ and $d_m\ne A$ (see Figure \ref{figure:5}). Then   $d_m^\+=1$ and  $d_{m+1}^\+ d_{m+2}^\+ d_{m+3}^\+=010$.  By Proposition \ref{prop:21} it follows that $d_{m+4}^\+ d_{m+5}^\+ d_{m+6}^\+=111$ which gives $d_{m+4}d_{m+5}d_{m+6}\in\set{B,C}^3$. Using Lemma \ref{l42} we   have only two choices: either $d_{m+4}d_{m+5}d_{m+6}=BCB$ or $d_{m+4}d_{m+5}d_{m+6}=CBC$.
 
 If $d_{m+4}d_{m+5}d_{m+6}=BCB$, then 
 $d_{m+1}^1\ldots d_{m+6}^1=010101,$ which implies $d_{m+1}^1=0$ and $d_{m+2}^1d_{m+3}^1\ldots\succ \de(\beta)$. 
By Proposition \ref{prop:21} this implies   $(d_i)\notin\us_\beta$. So, $d_{m+4}d_{m+5}d_{m+6}=CBC$ as required.  

(iv). Suppose $d_{m+1}d_{m+2}d_{m+3}=CBC$ and $d_m\ne C$ (see Figure \ref{figure:5} second column). Then
$ d_m^2d_{m+1}^2d_{m+2}^2d_{m+3}^2=0101.$
Since $\de(\beta)$ begins with $101000$, by Proposition \ref{prop:21} it follows that $d_{m+4}^2d_{m+5}^2d_{m+6}^2=000$ which yields $d_{m+4}d_{m+5}d_{m+6}\in\set{A,B}^3$. By Lemma \ref{l42}
we have only two choices: either $d_{m+4}d_{m+5}d_{m+6}=ABA$ or $d_{m+4}d_{m+5}d_{m+6}=BAB$. 

If $d_{m+4}d_{m+5}d_{m+6}=BAB$. Then 
$
d_{m+1}^1\ldots d_{m+6}^1=010101,
$ which implies $d_{m+1}^1=0$ and $d_{m+2}^1d_{m+3}^1\ldots \succ \de(\beta)$.
By Proposition \ref{prop:21} we have  $(d_i)\notin\us_\beta$. Hence, $d_{m+4}d_{m+5}d_{m+6}=ABA$. This completes the proof.  
 \end{proof}

 \begin{proof}[Proof of Proposition \ref{prop:41}]
Take $\beta\in(\beta_G,\beta_2]$. Note that $\de(\beta_G)=(100)^\f$ and $\de(\beta_2)=(101000)^\f$. Then  by Lemma \ref{lem:delta-beta} $\de(\beta)$ begins with $101000$. By Proposition \ref{prop:21} one can check that the sequences 
$(BAC)^\f$  and $(ABC)^\f$
 belong to $\US_\beta$. Note that the set-valued map $\beta\mapsto \US_\beta$ is increasing. It suffices to show that any sequence in $\US_{\beta_2}\setminus\US_{\beta_G}$ must end with   $(ABC)^\f$ or $(CBA)^\f$.
 
 Take $(d_i)\in\US_{\beta_2}\setminus\US_{\beta_G}$. Suppose on the contrary that $(d_i)$ ends with neither  $(ABC)^\f$ nor $(CBA)^\f$. Note by Theorem  \ref{th:sidorov} that $\US_{\beta_G}=\set{A^\f, B^\f, C^\f}$. Then by Lemma \ref{l42} there exists $m\ge 1$ such that 
 \[d_{m}\ne d_{m+1}\quad\textrm{and}\quad  d_{m+1}d_{m+2}d_{m+3}\in\set{ABA, BAB, ACA, CAC, BCB, CBC}.\]
Since the proof for different cases are similar, we only consider the case $d_{m+1}d_{m+2}d_{m+3}=ABA$ and $d_m\ne A$.
By Lemma \ref{lem:43} (iii) and (iv) it follows that $d_{m+1}d_{m+2}\ldots =(ABACBC)^\f$. This implies that 
\[
d_m^\+=1,\quad\textrm{and}\quad d_{m+1}^\+d_{m+2}^\+\ldots =(010111)^\f=\overline{\de(\beta_2)}.
\]
By Proposition \ref{prop:21} we have $(d_i)\notin\US_{\beta_2}$, leading to a contradiction. This completes the proof.
 \end{proof}

 \begin{proof}[Proof of Theorem \ref{th:S=D}]
 Note by Proposition \ref{lem:22}  that   for $\beta\in(1,3/2]$ we have $\u_\beta=\U_\beta$. Furthermore, by Theorem \ref{th:sidorov} we have $\US_{\beta_G}=\set{A^\f, B^\f, C^\f}$. Therefore, the theorem follows by the proof of Proposition \ref{prop:41}   that for any $\beta\in(\beta_G, \beta_2]$ the set $\US_{\beta}\setminus\US_{\beta_G}$ consists of all sequences of the form 
\[
d^n(BAC)^\f,\quad d^n(CAB)^\f;\quad d^n(CBA)^\f,\quad d^n(ABC)^\f;\quad d^n(ACB)^\f, \quad d^n(BCA)^\f,
\]
where $d\in\A$ and $n=0,1,2,\ldots$. 
 \end{proof}

 \section{Admissible words in $\US_{\beta_c}$}\label{sec:admissible}
In this section we will describe the admissible words   in sequences of  $\US_{\beta_c}$. Motivated by the previous section we will introduce three types of Thue-Morse words with respect to the alphabet $\A=\set{A, B, C}$. First we recall from Section \ref{s1}   the Thue-Morse type words $(\t_n)$ defined on  $\Om=\set{000, 001,100,101}$. 
 
 Let $\t_1=100$, and for $n\ge 1$ we set
 \[
 \t_{n+1}=\t_n^+\re({\t_n^+}),
 \]
 where $\re$ is the  block map defined by
 \[
 \re: \Om\ra \Om;\quad 000\mapsto 101,~ 001\mapsto 100,~ 100\mapsto 001,~101\mapsto 000.
 \] 
Here  for a word $\a=a_1\ldots a_m\in\Om^m$ we set $\re(\a)=\re(a_1)\ldots \re(a_m)$. Clearly, $\re\circ\re(\a)=\a$.
Observe that $\t_{n}$ ends with $0$ and has length $3\cdot 2^{n-1}$. Furthermore, $\t_{n+1}$ begins with $\t_n^+$ for all $n\ge 1$. This implies that the component-wise limit of the sequence $(\t_n)$ is well-defined, which is the Thue-Morse type sequence $(\la_i)$. So $(\la_i)$ begins with $\t_n^+$ for all $n\ge 1$. One can check that the first few terms of $(\la_i)$ are given by 
\[
101001000101\,000100101001\;000100101000101001000101.
\] 

Similarly, the component-wise limit of the sequence $(\re(\t_n))$ is also well-defined, denoted this limit by $(\ga_i)$. Then $(\ga_i)$ begins with $\re(\t_n^+)$ for all $n\ge 1$, and the first few terms of $(\ga_i)$ are given by
\[000100101000\,101001000100\; 101001000101000100101000.\]
 Therefore, for any $n\ge 1$ we have
$
\la_1\ldots \la_{3\cdot 2^{n-1}}=\t_n^+$ and $\ga_1\ldots \ga_{3\cdot 2^{n-1}}=\re(\t_n^+).$

  \begin{lemma}\label{l50}
For any $n\ge 1$ we have 
 \begin{eqnarray}
&&\ga_1\ldots \ga_{3\cdot2^{n-1}-i}\prec \la_{i+1}\ldots \la_{3\cdot2^{n-1}}\lle \la_1\ldots \la_{3\cdot 2^{n-1}-i} \label{eq:la-ga-ineq}\\
&&\ga_1\ldots \ga_{3\cdot2^{n-1}-i}\le \ga_{i+1}\ldots \ga_{3\cdot2^{n-1}}\prec \la_1\ldots \la_{3\cdot 2^{n-1}-i} \label{eq:la-ga-ineq-1}
 \end{eqnarray}
 {for all } $0\le i<3\cdot2^{n-1}.$
 \end{lemma}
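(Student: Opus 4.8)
The plan is to prove the pair of inequalities \eqref{eq:la-ga-ineq} and \eqref{eq:la-ga-ineq-1} simultaneously by induction on $n$. For $n=1$ one has $\la_1\la_2\la_3=\t_1^+=101$ and $\ga_1\ga_2\ga_3=\re(\t_1^+)=000$, and the claimed inequalities for $i=0,1,2$ are a short finite check (e.g.\ for $i=1$: $\ga_1\ga_2=00\prec\la_2\la_3=01\lle\la_1\la_2=10$ and $00\le\ga_2\ga_3=00\prec\la_1\la_2=10$). So the base case is routine.

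For the inductive step, assume \eqref{eq:la-ga-ineq} and \eqref{eq:la-ga-ineq-1} hold at level $n$, with $L_n:=3\cdot 2^{n-1}$, and prove them at level $n+1$, where $L_{n+1}=2L_n$. The key structural facts are that $\la_1\ldots\la_{L_{n+1}}=\t_{n+1}^+=\t_n^+\,\re(\t_n^+)^+$ and $\ga_1\ldots\ga_{L_{n+1}}=\re(\t_{n+1}^+)=\re(\t_n^+)\,(\t_n^+)$ —- here I use $\re\circ\re=\mathrm{id}$ and that $\re$ of the $+$-variant differs from the $+$-variant of $\re$ only in the last digit, which I should verify carefully from the definition of $\re$ on $\Om$ (note $000\leftrightarrow 101$ and $001\leftrightarrow 100$ swap in pairs, so changing the last bit of a block of $\t_n$ from $0$ to $1$ toggles between the two members of such a pair, and $\re$ respects that toggling up to the final coordinate). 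Thus at level $n+1$, up to the very last symbol, $(\la_i)_{i\le L_{n+1}}$ is $\t_n^+$ followed by $\re(\t_n^+)$ and $(\ga_i)_{i\le L_{n+1}}$ is $\re(\t_n^+)$ followed by $\t_n^+$; in other words the two halves of the level-$(n+1)$ words are the level-$n$ words in the two orders.

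Now split the range $0\le i<L_{n+1}$ into $0\le i<L_n$ and $L_n\le i<L_{n+1}$, and in each case split the windows $\la_{i+1}\ldots\la_{L_{n+1}}$, $\la_1\ldots\la_{L_{n+1}-i}$, etc.\ at the midpoint $L_n$. When $L_n\le i< L_{n+1}$ the windows lie entirely within a single half, and the desired inequalities reduce directly to the inductive hypotheses \eqref{eq:la-ga-ineq}, \eqref{eq:la-ga-ineq-1} at level $n$ (with index $i-L_n$), after matching which half is which via the two facts above; one must also keep track of the single last-symbol discrepancy between $\re(\t_n^+)$ and $\re(\t_n)$ / between $(\t_n^+)$ and $\t_n$, which only strengthens strict inequalities and is harmless for $\lle$ ones. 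When $0\le i<L_n$ each window straddles the midpoint and is a concatenation of a suffix of one level-$n$ word and a prefix of the other; comparing two such concatenations, one first compares the length-$(L_n-i)$ heads using the level-$n$ inequalities, and \emph{if they happen to be equal} one must compare the tails, which are full level-$n$ words $\t_n^+$ versus $\re(\t_n^+)$ (and one knows $\re(\t_n^+)\prec\t_n^+$ since $\ga_1\prec\la_1$).

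The main obstacle is exactly this last point: controlling the ``tie-breaking'' when the length-$(L_n-i)$ prefixes of the two compared windows coincide. Resolving it requires knowing not just the level-$n$ inequalities but the precise condition under which equality holds on the left side of \eqref{eq:la-ga-ineq} (namely $\la_{i+1}\ldots\la_{L_n}=\la_1\ldots\la_{L_n-i}$, which happens only for $i=0$), so that one can rule out the bad alignments or push the comparison into the second block where $\re(\t_n^+)$ vs.\ $\t_n^+$ settles it. I would therefore strengthen the induction hypothesis if necessary to record when the $\lle$ in \eqref{eq:la-ga-ineq} is an equality, and then the straddling case becomes a bookkeeping exercise: in all but the degenerate alignment the level-$n$ strict inequalities already decide the comparison, and in the degenerate alignment the known relation $\re(\t_n^+)\prec\t_n^+$ (together with the last-bit correction) gives the strict inequality on the appropriate side and the weak inequality on the other. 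Assembling these cases yields \eqref{eq:la-ga-ineq} and \eqref{eq:la-ga-ineq-1} at level $n+1$, completing the induction.
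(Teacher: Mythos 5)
Your strategy---a direct induction on $n$ driven by the recursion $\t_{n+1}=\t_n^+\re(\t_n^+)$---is genuinely different from the paper's. The paper instead observes that $(\la_i)$ and $(\ga_i)$ are obtained from the shifted classical Thue--Morse sequence $(\tau_i)$ and its reflection by inserting a $0$ after each odd-indexed term, and then, for $i\equiv 0\pmod 3$, reduces \eqref{eq:la-ga-ineq} to the known inequalities $\overline{\tau_1\ldots\tau_{2^n-j}}\prec\tau_{j+1}\ldots\tau_{2^n}\lle\tau_1\ldots\tau_{2^n-j}$; the two remaining residues of $i$ are settled by inspecting the first three symbols of the shifted windows. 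That reduction disposes of the lemma in a few lines and never meets the tie-breaking problem that dominates your argument.

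As written, your induction has a genuine gap exactly at the point you call ``the main obstacle,'' and the two specific claims you lean on there are incorrect. First, with $L_n=3\cdot2^{n-1}$, equality in $\la_{i+1}\ldots\la_{L_n}\lle\la_1\ldots\la_{L_n-i}$ does \emph{not} occur only for $i=0$: already for $i=L_n-1$ one has $\la_{L_n}=1=\la_1$, so ties occur for other alignments and the proposed strengthening of the hypothesis rests on a false premise. Second, in the straddling case the two windows split at different positions ($L_n-i$ versus $L_n$), so once the heads of length $L_n-i$ are found equal, the residual comparison is between $\re(\t_n)$ and the concatenation of the length-$i$ suffix of $\t_n^+$ with the length-$(L_n-i)$ prefix of $\re(\t_n)$---not between ``full level-$n$ words $\t_n^+$ versus $\re(\t_n^+)$.'' This residual comparison can in fact be closed by a second appeal to \eqref{eq:la-ga-ineq} at level $n$ with index $L_n-i$ (which gives $\ga_1\ldots\ga_i\prec\la_{L_n-i+1}\ldots\la_{L_n}$), but that is a different argument from the one you sketch, and until it is carried out case by case for all four inequalities the proposal remains a plan rather than a proof. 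A minor further slip: $\re(\t_{n+1}^+)=\re(\t_n^+)\,\t_n$, not $\re(\t_n^+)\,\t_n^+$; you flag the last-digit discrepancy in words, but the displayed identity is the wrong one.
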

 \begin{proof}
 Since the proof of (\ref{eq:la-ga-ineq-1}) is similar, here we only prove (\ref{eq:la-ga-ineq}).
 
 Let $(\tau_i)_{i=1}^\f=11010011\ldots$ be the  shifted classical Thue-Morse sequence (cf.~\cite{Komornik-Loreti-1998}, see also \cite{Glendinning_Sidorov_2001}).   By the definition of   $(\la_i)$  it follows   that $(\la_i)$ can  be obtained from $(\tau_i)$ by adding an extra zero between  $\tau_{2k+1}$ and $\tau_{2k+2}$ for each $k\ge 0$. Similarly, the sequence $(\ga_i)$ can be reconstructed from $\overline{(\tau_i)}$ by adding an extra zero between $\overline{\tau_{2k+1}}$ and $\overline{\tau_{2k+2}}$ for each $k\ge 0$, where  
 $\overline{0}=1$ and $\overline{1}=0$. In other words, for any $k\ge 0$ we have 
 \begin{equation}\label{eq:la-ga-1}
 \begin{split}
 &\la_{3k+1}=\tau_{2k+1},\quad \la_{3k+2}=0,\quad \la_{3k+3}=\tau_{2k+2},\\
 &\ga_{3k+1}=\overline{\tau_{2k+1}},\quad \ga_{3k+2}=0,\quad \ga_{3k+3}=\overline{\tau_{2k+2}}.
 \end{split}
 \end{equation}

Clearly, the inequalities in (\ref{eq:la-ga-ineq}) holds for $n=1$. Now let $n\ge 2$ and take $0\le i<3\cdot 2^{n-1}$.  We will prove (\ref{eq:la-ga-ineq}) by considering three cases.

(I). $i=3k$ with $0\le k<2^{n-1}$. Then by (\ref{eq:la-ga-1}) it follows that 
\[
\la_{3k+1}\ldots\la_{3\cdot 2^{n-1}}=\tau_{2k+1}0\tau_{2k+2}\;\tau_{2k+3}0\tau_{2k+4}\;\ldots\;\tau_{2^n-1}0\tau_{2^n}.
\]
Then (\ref{eq:la-ga-ineq}) follows by (\ref{eq:la-ga-1}) and    the property of $(\tau_i)$ that  
 \[
 \overline{\tau_1\ldots \tau_{2^{n}-j}}\prec\tau_{j+1}\ldots \tau_{2^{n}}\lle \tau_1\ldots \tau_{2^{n}-j} 
 \]for all $0\le j<2^{n}$ (cf.~\cite{Komornik-Loreti-1998}).
 
 (II). $i=3k+1$ with $0\le k<2^{n-1}$. Then by (\ref{eq:la-ga-1}) we have
 \[\la_{3k+2}\la_{3k+3}\la_{3k+4}=0\tau_{2k+2}\tau_{2k+3}=0\tau_{k+1}(1-\tau_{k+1}),\]
 where the last equality follows by using that $\tau_{2j}=\tau_j$ and $\tau_{2j+1}=1-\tau_{j}$ for all $j\ge 1$ (cf.~\cite{Allouche_Shallit_1999}). Thus, (\ref{eq:la-ga-ineq}) holds since $(\ga_i)$ begins with $000$ and $(\la_i)$ begins with $101$.
 
 (III). $i=3k+2$  with $0\le k<2^{n-1}$. Again by (\ref{eq:la-ga-1}) we have
 $
 \la_{3k+3}\la_{3k+4}\la_{3k+5}=\tau_{2k+2}\tau_{2k+3}0=\tau_{k+1}(1-\tau_{k+1})0.
 $
 By the same argument as in Case (II) we prove (\ref{eq:la-ga-ineq}). 
 \end{proof}

 Recall that $\de(\beta)$ is the quasi-greedy $\beta$-expansion of $1$. By Lemmas \ref{lem:delta-beta} and \ref{l50} it follows that  each  Thue-Morse type word $\t_n$  defines a unique base $\beta_n\in (1,2)$. 
 \begin{definition}
 \label{def:beta-n}
 Set $\beta_0=1$, and for $n\ge 1$ let $\beta_n\in(1, 2)$  such that  $\de(\beta_n)=(\t_n)^\f$.
 \end{definition}

In terms of  Definition \ref{def:beta-n}  we have  $\de(\beta_1)=(100)^\f$ and $\de(\beta_2)=(101000)^\f$. Then  $\beta_1=\beta_G$ and $\beta_2\approx 1.5385$ as defined  in  the previous section. 
 Furthermore,  by (\ref{eq:betaG-betaC}), Lemmas \ref{lem:delta-beta} and \ref{l50} the sequence $(\la_i)$ is indeed the quasi-greedy $\beta_c$-expansion of $1$, i.e., 
 \begin{equation}\label{eq:betac-delta}
 \de(\beta_c)=(\la_i)=\t_n^+\re(\t_n)\ldots=101001000101\;000100101001\ldots 
 \end{equation} 
 for any $n\ge 1$.   Observe that 
 \[\t_1\prec \t_2\prec\cdots\prec \t_n\prec\t_{n+1}\prec\cdots,\quad\textrm{and}\quad \t_n\nearrow (\la_i)\textrm{ as }n\ra\f.\]
 By Lemma \ref{lem:delta-beta} and  Definition \ref{def:beta-n} it follows that 
 \[
 1=\beta_0<\beta_1=\beta_G<\beta_2<\cdots<\beta_n<\beta_{n+1}<\cdots,\quad\textrm{and}\quad \beta_n\nearrow \beta_{c} \textrm{ as }n\ra\f.
 \]
 So these bases $(\beta_n)_{n=1}^\f$ form a partition of the interval $(1,\beta_c)$.
 
 In the following  we describe all possible admissible words in $\US_{\beta_c}$ which are constructed by  three types of Thue-Morse words in  $\A^*$. In the next section we will show that the set-valued map $\beta\mapsto \US_\beta$ is constant on each subinterval $(\beta_n, \beta_{n+1}]$ for all $n\ge 0$, and completely characterize the sets $\US_{\beta_n}$ and $\US_{\beta_c}$.
 
  \subsection{Type-$A$ Thue-Morse words in $\US_{\beta_c}$} 
Let $\Phi_A$ be the  bijective map on $\A$ defined by  
 \[
 \Phi_A: \A\ra\A;\quad  A\mapsto A, ~B\mapsto C, ~C\mapsto B.
 \]
Accordingly,  for a word $\d=d_1\ldots d_m\in\A^m$ we set $\Phi_A(\d)=\Phi_A(d_1)\ldots\Phi_A(d_m)$ as the word by concatenating  $\Phi_A(d_1), \ldots, \Phi_A(d_{m-1})$ and $\Phi_A(d_m)$. Clearly, $\Phi_A\circ\Phi_A(\d)=\d$. We point out that the name `Type-$A$' comes from   that $A$ is the fixed point of $\Phi_A$ (see Figure \ref{figure:4}).
 
\begin{definition}\label{def:type-a}
The \emph{Type-$A$ Thue-Morse words} $(\su_n)$ are defined as follows. Let $\su_0:=A, \su_1:=BAC$, and for $n\ge 1$ we set  
 \[
 \su_{n+1}:=\su_n^B\Phi_A(\su_n^B),
 \]
 where $\su_n^B$ is the word by changing the last digit of $\su_n$ to $B$. \end{definition}
By Definition \ref{def:type-a} one can verify  that  
 \begin{align*}
 &\su_2=BABCAC,\quad \su_3=BABCAB\,CACBAC,\\
 &\su_4=BABCAB\, CACBAB\;CACBACBABCAC.
 \end{align*}
Clearly, for each $n\ge 1$ the length of $\su_n$ is $3\cdot 2^{n-1}$ and each $\su_n$ ends with digit $C$.

Observe that for a given word $\d=d_1\ldots d_n\in\A^n$ the corresponding {coordinate words} 
\[\d^1:=d_1^1\ldots d_n^1, \quad \d^2:=d_1^2\ldots d_n^2\]
 are determined and they both belong to $\set{0,1}^n$.   
 The following lemma describes the relationship between the coordinate words of $\su_n$, $\Phi_A(\su_n)$ and the Thue-Morse type word $\t_n$.  
   
 \begin{lemma}\label{lem:51}
 For any $n\ge 1$ we have 
\begin{equation}\label{eq:un-property}
\su_n^1=\Phi_A(\su_n)^2= \t_n\quad{and}\quad  \su_n^2=\Phi_A(\su_n)^1 =\re({\t_n}).
\end{equation}
  \end{lemma}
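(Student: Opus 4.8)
The plan is to prove both identities in \eqref{eq:un-property} simultaneously by induction on $n$, exploiting the parallel recursive structures of the Type-$A$ Thue-Morse words $(\su_n)$ and the Thue-Morse type words $(\t_n)$. First I would dispose of the base case: $\su_1 = BAC$ gives $\su_1^1 = 100 = \t_1$ and $\su_1^2 = 001 = \re(100) = \re(\t_1)$ directly from the definitions of $\re$ and $\t_1$; similarly $\Phi_A(\su_1) = CAB$ gives $\Phi_A(\su_1)^1 = 001 = \re(\t_1)$ and $\Phi_A(\su_1)^2 = 100 = \t_1$, so all four equalities hold for $n=1$.

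For the inductive step I would first record how the coordinate-word operation interacts with $\Phi_A$ and with the last-digit modification. The key observation is that $\Phi_A$ swaps $B \leftrightarrow C$ and fixes $A$, hence on coordinate words it swaps the two coordinates: $\Phi_A(\d)^1 = \d^2$ and $\Phi_A(\d)^2 = \d^1$ for every word $\d \in \A^*$. A second bookkeeping point: changing the last digit of $\su_n$ to $B$ affects only the last coordinate entries, and since $\su_n$ ends in $C$ (so its last coordinate pair is $(0,1)$) while $B$ has coordinate pair $(1,0)$, one gets $(\su_n^B)^1 = (\su_n^1)^+$ and $(\su_n^B)^2 = $ (the word $\su_n^2$ with its final $1$ changed to $0$). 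I should check that this matches the corresponding fact for $\t_n$, namely that $\t_n$ ends in $0$ and $\t_n^+$ is obtained by flipping that last $0$ to $1$; and I must verify the analogous statement for $\re(\t_n)$ — that $\re(\t_n)$ ends in $1$, so flipping its last entry to $0$ is a meaningful operation matching $(\su_n^B)^2$. The ending digits follow because $\t_n$ always ends in one of the blocks from $\Om$ terminating in $0$, and $\re$ sends such a block to one terminating in $1$ (indeed $000\mapsto 101$, $100\mapsto 001$, both ending in $1$).

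Now I would assemble the step. Using $\su_{n+1} = \su_n^B \,\Phi_A(\su_n^B)$ and taking first coordinates: $\su_{n+1}^1 = (\su_n^B)^1 \,\Phi_A(\su_n^B)^1 = (\su_n^1)^+ \, (\su_n^B)^2$. By the induction hypothesis $\su_n^1 = \t_n$, so $(\su_n^1)^+ = \t_n^+$; and $(\su_n^B)^2$ equals $\su_n^2 = \re(\t_n)$ with its final digit flipped from $1$ to $0$ — which I claim is exactly $\re(\t_n^+)$, since changing the last digit of $\t_n$ from $0$ to $1$ changes the last block of $\t_n$ from $000$ to $001$ (or from $100$ to $101$), and $\re$ sends $000 \mapsto 101$, $001 \mapsto 100$ (resp. $100 \mapsto 001$, $101 \mapsto 000$), so the image's last digit flips from $1$ to $0$ accordingly. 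Hence $\su_{n+1}^1 = \t_n^+ \re(\t_n^+) = \t_{n+1}$. The computation for $\su_{n+1}^2$ is parallel: $\su_{n+1}^2 = (\su_n^B)^2 \, \Phi_A(\su_n^B)^2 = [\re(\t_n)\text{ with last digit }0]\,(\su_n^1)^+ = \re(\t_n^+)\,\t_n^+$, which I must identify with $\re(\t_{n+1}) = \re(\t_n^+ \re(\t_n^+)) = \re(\t_n^+)\,\re(\re(\t_n^+)) = \re(\t_n^+)\,\t_n^+$ using the involutivity $\re\circ\re = \mathrm{id}$. Finally the two identities for $\Phi_A(\su_{n+1})$ follow immediately from the coordinate-swap property $\Phi_A(\d)^1 = \d^2$, $\Phi_A(\d)^2 = \d^1$ applied to $\d = \su_{n+1}$.

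The main obstacle I anticipate is not conceptual but a careful alignment of the two "last-digit" operations: matching the operation "flip the final $1$ of $\re(\t_n)$ to $0$" with "apply $\re$ to $\t_n^+$" requires knowing precisely which block of $\Om$ terminates $\t_n$ and tracking $\re$ on it, and symmetrically for the $B$-modification on $\su_n$. Getting the ending-digit invariants ($\t_n$ ends in $0$, $\re(\t_n)$ ends in $1$, $\su_n$ ends in $C$) cleanly stated and carried through the induction alongside the main claim is the delicate part; everything else is formal manipulation of concatenations.
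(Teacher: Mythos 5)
Your proposal is correct and follows essentially the same route as the paper: induction on $n$ with base case $\su_1=BAC$, using the recursions $\su_{n+1}=\su_n^B\Phi_A(\su_n^B)$ and $\t_{n+1}=\t_n^+\re(\t_n^+)$ together with the fact that $\Phi_A$ swaps the two coordinate words. The only difference is that you spell out the bookkeeping the paper leaves implicit (that $(\su_n^B)^1=\t_n^+$, $(\su_n^B)^2=\re(\t_n^+)$, and the compatibility of the last-digit flip with $\re$ on the terminal $\Om$-block), all of which you verify correctly.
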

 \begin{proof}
  We will prove this   by induction on $n$.   First we consider $n=1$. Then $\su_1=BAC$ and $\Phi_A(\su_1)=CAB$. This implies
 \[
 \su_1^1=\Phi_A(\su_1)^2=100=\t_1\quad\textrm{and}\quad \su_1^2=\Phi_A(\su_1)^1=001=\re(\t_1).
 \]
 So, (\ref{eq:un-property}) holds for $n=1$.
 
 Now suppose (\ref{eq:un-property}) holds for $n=k$, and we consider $n=k+1$. Note that  $\su_{k+1}=\su_k^B\Phi_A(\su_k^B)$ and $\su_k$ ends with digit $C$. Recall that $\t_{k+1}=\t_k^+\re({\t_k^+})$, and then $\re(\t_{k+1})=\re(\t_k^+)\t_k^+.$ By  using    the induction hypothesis  it follows that 
 \begin{align*}
 \su_{k+1}^1&=(\su_k^B)^1(\Phi_A(\su_k^B))^1=\t_k^+\re(\t_k^+)=\t_{k+1},\\
 \su_{k+1}^2&=(\su_k^B)^2(\Phi_A(\su_k^B))^2=\re(\t_k^+)\t_k^+=\re(\t_{k+1}).
 \end{align*}
 This proves (\ref{eq:un-property}) for $n=k+1$.  Hence, the lemma follows by induction. 
 \end{proof}
 
  In the next section  we will show that  for $\beta=\beta_c$ the Type-$A$ Thue-Morse words are all admissible in   $\US_{\beta_c}$. The following proposition states that once the block $\su_k^B$ or $\Phi_A(\su_k^B)$ occurs in a sequence of $\US_{\beta_c}$, then the next block of length $3\cdot 2^{k-1}$ is nearly determined. 
 \begin{proposition}\label{prop:betac-a}
 Let $(d_i)\in\US_{\beta_c}$. Then for any    $k\ge 1$ the following statements hold.
 \begin{enumerate}[{\rm(i)}]
 \item If $d_{m+1}\ldots d_{m+3\cdot2^{k-1}}=\su_k^B$ and $d_m\ne d_{m+1}$, then  
 \[d_{m+3\cdot2^{k-1}+1}\ldots d_{m+3\cdot2^k }\in\set{\Phi_A(\su_k^B), \Phi_A(\su_k)}.\]
 
 \item  If $d_{m+1}\ldots d_{m+3\cdot2^{k-1}}=\Phi_A(\su_k^B)$ and $d_m\ne d_{m+1}$, then   
 \[d_{m+3\cdot2^{k-1}+1}\ldots d_{m+3\cdot2^k}\in\set{\su_k^B, \su_k}.\]
 \end{enumerate}
 \end{proposition}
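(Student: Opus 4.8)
The plan is to prove (i) and (ii) simultaneously by induction on $k$, the engine being the lexicographical characterization of Proposition~\ref{prop:21} (with $\de(\beta_c)=(\la_i)$ by \eqref{eq:betac-delta}), the coordinate identities $\su_n^1=\Phi_A(\su_n)^2=\t_n$ and $\su_n^2=\Phi_A(\su_n)^1=\re(\t_n)$ of Lemma~\ref{lem:51}, and the Thue--Morse inequalities of Lemma~\ref{l50}. Since $\Phi_A$ is the involution of $\A$ that swaps $B$ and $C$ and fixes $A$, and it merely permutes the three conditions of Proposition~\ref{prop:21}, one has $(d_i)\in\US_{\beta_c}\iff(\Phi_A(d_i))\in\US_{\beta_c}$; so (ii) at level $k$ follows from (i) at level $k$ by applying $\Phi_A$ to the whole sequence, and it suffices to handle (i).

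\textbf{Base case $k=1$.} Here $\su_1^B=BAB$, $\Phi_A(\su_1^B)=CAC$ and $\Phi_A(\su_1)=CAB$, so the claim is that $BAB$ preceded by a different letter must be followed by $CAC$ or $CAB$. This is a finite check in the style of Lemma~\ref{lem:43}: $d_m^1=0$ and $d_{m+1}^1d_{m+2}^1d_{m+3}^1=101=\la_1\la_2\la_3$ force, through Proposition~\ref{prop:21} and $\la_4\la_5=00$, that $d_{m+4}^1=d_{m+5}^1=0$, hence $d_{m+4},d_{m+5}\in\set{A,C}$; Lemma~\ref{l42} then leaves only $d_{m+4}d_{m+5}d_{m+6}\in\set{ACA,ACB,CAC,CAB}$, and the $\+$-inequality against $\overline{\de(\beta_c)}=\overline{(\la_i)}=010110\ldots$ eliminates $ACA$ and $ACB$, leaving exactly $CAC$ and $CAB$. (The survival of $CAB$, compared with Lemma~\ref{lem:43}, is precisely the effect of $\de(\beta_c)\succ(101000)^\f$.)

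\textbf{Inductive step.} Assume (i), (ii) hold at level $k$. From Definition~\ref{def:type-a}, using that $\su_n,\su_n^B,\Phi_A(\su_n),\Phi_A(\su_n^B)$ agree pairwise off their last letter and that $\Phi_A\circ\Phi_A=\mathrm{id}$, one checks the structural identities
\[
\su_{k+1}^B=\su_k^B\,\Phi_A(\su_k),\quad \Phi_A(\su_{k+1}^B)=\Phi_A(\su_k^B)\,\su_k,\quad \Phi_A(\su_{k+1})=\Phi_A(\su_k^B)\,\su_k^B.
\]
Given $(d_i)\in\US_{\beta_c}$ with $d_{m+1}\ldots d_{m+3\cdot 2^k}=\su_{k+1}^B=\su_k^B\,\Phi_A(\su_k)$ and $d_m\ne d_{m+1}$, the first half $d_{m+1}\ldots d_{m+3\cdot 2^{k-1}}$ equals $\su_k^B$ (again with a different preceding letter), and at the midpoint $d_{m+3\cdot 2^{k-1}}=B\ne C=d_{m+3\cdot 2^{k-1}+1}$, so the inductive hypothesis is available both at $m$ and at the midpoint. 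Translating via Lemma~\ref{lem:51}, the three coordinate words of $\su_{k+1}^B$ are, up to a single last-digit correction on the first two, the prefixes $\la_1\ldots\la_{3\cdot 2^k}=\t_k^+\re(\t_k)=\t_{k+1}^+$ of $\de(\beta_c)$ (twice) and of $\overline{\de(\beta_c)}$; running the three inequalities of Proposition~\ref{prop:21} forward from each position of $\su_{k+1}^B$ at which a coordinate attains its critical value ($0$ for the first two, $1$ for $\+$), and using Lemma~\ref{l50} together with the inductive hypothesis at the two sub-blocks of length $3\cdot 2^{k-1}$ to resolve every comparison in the forced direction, pins the next $3\cdot 2^k-1$ letters of $(d_i)$ to the first $3\cdot 2^k-1$ letters of $\Phi_A(\su_{k+1}^B)$ (equivalently of $\Phi_A(\su_{k+1})$, which coincide there), while the comparison governing the $(3\cdot 2^k)$-th letter has become an equality — the Thue--Morse ``reset'' — leaving that letter free to be $B$ or $C$. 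Hence $d_{m+3\cdot 2^k+1}\ldots d_{m+3\cdot 2^{k+1}}\in\set{\Phi_A(\su_{k+1}^B),\Phi_A(\su_{k+1})}$, which is (i) at level $k+1$; (ii) at level $k+1$ then follows by the $\Phi_A$-symmetry.

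\textbf{Main obstacle.} I expect the delicate point to be the last step above: running the three lexicographic inequalities of Proposition~\ref{prop:21} in parallel along the whole block and recombining them correctly at each of the dyadically many internal junctions, so that at every stage the block is determined except possibly in its last letter, and that letter is then genuinely free. This is exactly where Lemma~\ref{l50} carries the weight: its inequalities encode that $\t_n$ and $\re(\t_n)$ are self-similar and extremal in the Thue--Morse sense under all shifts, and they play here the role the classical Thue--Morse combinatorics plays in the Komornik--Loreti argument and in Glendinning--Sidorov~\cite{Glendinning_Sidorov_2001}. Keeping exact track of which positions of $\su_{k+1}^B$ carry coordinate value $0$ (resp.\ $\+$-value $1$) — governed by $\la_{3j+2}=0$ and by Lemma~\ref{lem:51} — is the bookkeeping that makes this work.
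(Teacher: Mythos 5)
Your overall architecture (induction on $k$, driven by Proposition \ref{prop:21}, Lemma \ref{lem:51} and Lemma \ref{l50}) matches the paper's, and your reduction of (ii) to (i) via the digit-wise involution $\Phi_A$ — which swaps the first two conditions of Proposition \ref{prop:21} and preserves the third, so that $\US_{\beta_c}$ is $\Phi_A$-invariant — is a clean improvement on the paper's ``the proof of (ii) is similar''. However, there are two concrete gaps. First, in the base case $k=1$ the elimination of $d_{m+4}d_{m+5}d_{m+6}=ACB$ does not follow from the $\+$-inequality at the depth you invoke it: for $d_{m+1}\ldots d_{m+6}=BABACB$ one has $d_{m+2}^\+\ldots d_{m+6}^\+=01011$, which \emph{equals} the prefix $\overline{\la_1\ldots\la_5}$ of $\overline{\de(\beta_c)}$, so no violation of $d_{m+2}^\+d_{m+3}^\+\cdots\succ\overline{\de(\beta_c)}$ occurs there. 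The paper's Lemma \ref{lem:betac-a1}(i) has to look three letters further: the first-coordinate condition forces $d_{m+7}^1d_{m+8}^1d_{m+9}^1=000$, hence $d_{m+7}d_{m+8}d_{m+9}\in\set{ACA,CAC}$, and each of these is then killed separately (one by the second coordinate, one by a deeper $\+$-comparison). Your base case as written is therefore incomplete.

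Second, and more structurally, in the inductive step the induction hypothesis is \emph{not} available at the midpoint of $\su_{k+1}^B=\su_k^B\,\Phi_A(\su_k)$: the second half is $\Phi_A(\su_k)$, which ends in $B$, whereas hypothesis (ii) at level $k$ applies only to the block $\Phi_A(\su_k^B)$, which ends in $C$. Since the two differ precisely in their last letter, nothing in your statement of the IH lets you run it from the midpoint. The paper circumvents this by descending a level: it writes $\Phi_A(\su_n)=\Phi_A(\su_{n-1}^B)\,\su_{n-1}^B$, so that the trailing sub-block $\su_{n-1}^B$ of length $3\cdot2^{n-2}$ \emph{is} of the form covered by (i) at level $n-1$, and then alternates between applying the level-$(n-1)$ hypothesis and the lexicographic constraint $d^1_{m+3\cdot2^n+1}\ldots d^1_{m+3\cdot2^{n+1}}\lle\re(\t_{n+1})=\re(\t_{n-1}^+)\t_{n-1}\t_n^+$ to force each successive quarter-block, before finishing with the level-$n$ hypothesis. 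Because the step uses the hypothesis at \emph{two} preceding levels, the induction needs the two base cases $k=1$ and $k=2$ (the paper's Lemma \ref{lem:betac-a1}(ii) handles $k=2$ separately); your single base case $k=1$ cannot launch it. The rest of your step (``resolve every comparison in the forced direction'') is the right idea but is exactly the part that needs this two-level bookkeeping to be carried out.
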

Since the proof of (ii) is similar to (i), we only prove  Proposition \ref{prop:betac-a} (i). We do this now by   induction on $k$. First we consider  the cases $k=1$ and $k=2$. 
\begin{lemma}\label{lem:betac-a1}
Let $(d_i)\in\US_{\beta_c}$.
\begin{enumerate}[{\rm(i)}]
\item If $d_{m+1}d_{m+2}d_{m+3}=\su_1^B$ and $d_m\ne d_{m+1}$, then  
 $d_{m+4}d_{m+5} d_{m+6}\in\set{\Phi_A(\su_1^B), \Phi_A(\su_1)}.$
 
 \item  If $d_{m+1}\ldots d_{m+6}=\su_2^B$ and $d_m\ne d_{m+1}$, then  
 $d_{m+7}\ldots d_{m+12}\in\set{\Phi_A(\su_2^B), \Phi_A(\su_2)}.$
 
 \end{enumerate}
\end{lemma}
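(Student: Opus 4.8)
The plan is to follow exactly the template used in Lemma \ref{lem:43}: translate each hypothesis on blocks of digits from $\A$ into statements about the coordinate sequences $(d_i^1), (d_i^2), (d_i^\+)$, and then apply the lexicographic characterization in Proposition \ref{prop:21} together with the explicit prefix \eqref{eq:betac-delta} of $\de(\beta_c)=(\la_i)$ and its reflection. The key numerical inputs are that $(\la_i)$ begins with $101001000101$ and hence $\overline{\de(\beta_c)}=\overline{(\la_i)}$ begins with $010110111010$; these are the only facts about $\beta_c$ I will use, beyond monotonicity of $\beta\mapsto\US_\beta$ (Remark \ref{re:22}) and Lemma \ref{l42} to rule out forbidden blocks $cdd$.

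For part (i): by Lemma \ref{lem:51} the coordinate words of $\su_1^B=BAB$ are $(\su_1^B)^1=101$, $(\su_1^B)^2=000$, $(\su_1^B)^\+=101$. Since $d_m\ne d_{m+1}=B$ and $d_{m+1}^1=1$, while $d_{m+2}^1=0$, the hypothesis forces $d_m^1=0$, so $d_m^1 d_{m+1}^1 d_{m+2}^1 d_{m+3}^1=0101$. Because $\de(\beta_c)$ begins with $1010\ldots$, the only way to avoid $d_{m+1}^1 d_{m+2}^1\ldots\succ\de(\beta_c)$ after the zero at position $m$ is $d_{m+4}^1 d_{m+5}^1 d_{m+6}^1=000$ (indeed $\de(\beta_c)$ begins with $101000$). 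Thus $d_{m+4}d_{m+5}d_{m+6}\in\{A,C\}^3$, and Lemma \ref{l42} eliminates every block of the shape $cdd$, leaving only $ACA$ and $CAC$. If $d_{m+4}d_{m+5}d_{m+6}=ACA$, then $d_{m+1}^\+\ldots d_{m+6}^\+=101010$, but $d_{m+1}^\+=1$ together with $\overline{\de(\beta_c)}$ beginning with $010110$ gives $d_{m+2}^\+ d_{m+3}^\+\ldots\prec\overline{\de(\beta_c)}$, contradicting Proposition \ref{prop:21}. Hence $d_{m+4}d_{m+5}d_{m+6}=CAC=\Phi_A(\su_1)$, and since the remaining case $d_{m+4}d_{m+5}d_{m+6}=CAB=\Phi_A(\su_1^B)$ is exactly the other allowed option, $d_{m+4}d_{m+5}d_{m+6}\in\{\Phi_A(\su_1^B),\Phi_A(\su_1)\}$, as claimed. (Actually the clean way to phrase it: the only surviving possibilities with first coordinates $000$ are $d_{m+4}d_{m+5}d_{m+6}\in\{CAC, CAB, ACA\}\setminus\{ACA\}=\{CAC,CAB\}=\{\Phi_A(\su_1),\Phi_A(\su_1^B)\}$.)

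For part (ii): here $\su_2^B=BABCAB$ has coordinate words (by Lemma \ref{lem:51}) $(\su_2^B)^1=\t_2^+=101001$, $(\su_2^B)^2=\re(\t_2)$ with last digit adjusted $=010110$, and one computes $(\su_2^B)^\+=111111$? — no: $\su_2^B=BABCAB$ gives $d^\+=1\,0\,1\,1\,0\,1$. From $d_m\ne d_{m+1}=B$ I get $d_m^1=0$ (since $d_{m+1}^1=1,d_{m+2}^1=0$), so $(d_i^1)$ has a $0$ at position $m$ followed by $101001\ldots$; comparing with $\de(\beta_c)=(\la_i)$, whose prefix of length $12$ is $101001000101$, the characterization forces $d_{m+7}^1\ldots d_{m+12}^1=000101$. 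Meanwhile $(d_i^\+)$ has $d_m^\+=1$ (from $d_m\ne B$, so $d_m\in\{A,C\}$ — wait, $d_m^\+$ could be $0$; I must instead use $d_{m+3}=C$, so $d_{m+3}^\+=1$ followed by $d_{m+4}^\+ d_{m+5}^\+ d_{m+6}^\+=011$, giving $d_{m+7}^\+\ldots\succ\overline{\de(\beta_c)}$ which constrains $d_{m+7}^\+\ldots d_{m+12}^\+$ from below). Combining the first-coordinate constraint $000101$ with the $\pm$-constraint, and using Lemma \ref{l42} to forbid $cdd$-blocks throughout, narrows $d_{m+7}\ldots d_{m+12}$ to the two candidates $\Phi_A(\su_2^B)=CACBAC$ and $\Phi_A(\su_2)=CACBAB$; any third option is ruled out by exhibiting the prohibited lexicographic inequality via Proposition \ref{prop:21}, exactly as in the $k=1$ case.

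The main obstacle I anticipate is the bookkeeping in part (ii): with blocks of length $6$ one must track all three coordinate projections simultaneously and repeatedly invoke Proposition \ref{prop:21} at several shifted positions $n$, checking against the length-$12$ prefix of $\de(\beta_c)$ and of $\overline{\de(\beta_c)}$; there are a priori $2^6$ continuations of $\{A,C\}^6$ or $\{B,C\}^6$ type to eliminate, and although Lemma \ref{l42} and the coordinate constraints collapse this quickly, writing it cleanly without case-explosion requires care. One natural economy is to prove (ii) by reducing to (i): having fixed $d_{m+4}d_{m+5}d_{m+6}=CAC$ from (i) applied to the prefix $BAB$, one then applies part (i) of Lemma \ref{lem:43} (the $CAC\mapsto BAB$ step) to propagate, but since $\su_2^B$ forces a $B$ rather than a $C$ at position $m+6$, an extra comparison against $\de(\beta_c)$ at that position is needed — this is where the Thue–Morse structure \eqref{eq:betac-delta}, specifically $\la_7\la_8\la_9=000$ and $\la_{10}\la_{11}\la_{12}=101$, does the work. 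I expect the whole argument to fit in about a page, mirroring Lemma \ref{lem:43} almost verbatim with the substitutions $\de(\beta_2)\rightsquigarrow\de(\beta_c)$ and blocks of length $3\rightsquigarrow$ length $6$.
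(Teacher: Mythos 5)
Your overall strategy (project onto the coordinate sequences, compare against the prefix of $\de(\beta_c)$ via Proposition \ref{prop:21}, and prune with Lemma \ref{l42}) is the same as the paper's, but part (i) of your argument contains a genuine error that causes you to skip the hardest case. You assert that $\de(\beta_c)$ begins with $101000$; in fact $\de(\beta_c)=(\la_i)=101001000101\ldots$ begins with $101001$. Consequently, from $d_m^1\ldots d_{m+3}^1=0101$ and $d_{m+1}^1d_{m+2}^1\ldots\prec\de(\beta_c)$ you may only conclude $d_{m+4}^1d_{m+5}^1=00$ and $d_{m+4}^1d_{m+5}^1d_{m+6}^1\preccurlyeq 001$, not $=000$. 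The continuation $d_{m+4}d_{m+5}d_{m+6}=ACB$ (first coordinates $001$) therefore survives every test you apply, and it is not one of the two admissible outcomes. Eliminating it is the bulk of the paper's proof of (i): one looks ahead to $d_{m+7}d_{m+8}d_{m+9}$, notes that $d_m^1\ldots d_{m+6}^1=0101001$ together with the prefix $101001000$ of $\de(\beta_c)$ forces $d_{m+7}^1d_{m+8}^1d_{m+9}^1=000$, reduces to $\set{ACA,CAC}$ by Lemma \ref{l42}, and kills the two sub-cases by a second-coordinate comparison (producing $010101$) and a $\+$-comparison against $\overline{\de(\beta_c)}=01011011\ldots$, respectively. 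Your write-up is also internally inconsistent on this point: having (wrongly) forced $d_{m+6}^1=0$, you nonetheless list $CAB=\Phi_A(\su_1)$, whose third letter has first coordinate $1$, among the ``surviving possibilities with first coordinates $000$''.

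Part (ii) suffers from the same over-forcing: the inequality $d_{m+7}^1d_{m+8}^1\ldots\preccurlyeq\la_7\la_8\ldots=000101\ldots$ does force $d_{m+7}^1d_{m+8}^1d_{m+9}^1=000$ (being bounded above by the all-zero word is an equality), but it only bounds $d_{m+10}^1d_{m+11}^1d_{m+12}^1$ above by $101$ and does not determine it; your claimed conclusion $d_{m+7}^1\ldots d_{m+12}^1=000101$ is unjustified. The paper instead first pins down $d_{m+7}d_{m+8}d_{m+9}=CAC$ (eliminating $ACA$ by a $\+$-comparison starting at $d_{m+4}=C$), and then gets $d_{m+10}d_{m+11}d_{m+12}\in\set{BAB,BAC}$ by the mirror of the part-(i) argument. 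Your proposed shortcut of ``fixing $d_{m+4}d_{m+5}d_{m+6}=CAC$ from (i)'' does not apply, since in (ii) the hypothesis already prescribes $d_{m+4}d_{m+5}d_{m+6}=CAB$. To repair the proof you need the correct prefix $101001000101$ of $\de(\beta_c)$ throughout and the extra look-ahead step in part (i).
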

\begin{proof}
First we prove (i).
Suppose  $d_{m+1}d_{m+2} d_{m+3}=\su_1^B=BAB$ and $d_m\ne B$. Then $d_{m}^1\ldots d_{m+3}^1=0101$. Recall from (\ref{eq:betac-delta}) that $\de(\beta_c)$ begins with $101001$. By Proposition \ref{prop:21} it follows that $d_{m+4}^1d_{m+5}^1=00$ which gives $d_{m+4}d_{m+5}\in\set{A, C}^2$. Then by Lemma \ref{l42} we have only two choices: either $d_{m+4}d_{m+5}=AC$ or $d_{m+4}d_{m+5}=CA$. 
 
  If $d_{m+4}d_{m+5}=AC$, then by Lemma \ref{l42} we have $d_{m+6}\in\set{A, B}$. We will prove in the following two cases that $(d_i)\notin\US_{\beta_c}$, and then conclude that $d_{m+4}d_{m+5}=CA$.
 \begin{itemize}
 \item Suppose $d_{m+6}=A$. Then, in view of Figure \ref{figure:10}, $d_{m+1}^\+=1$ and $d_{m+2}^\+\ldots d_{m+6}^\+=01010$, which implies that $d_{m+2}^\+d_{m+3}^\+\prec 010110\ldots=\overline{\de(\beta_c)}$. By Proposition \ref{prop:21} this  implies  $(d_i)\notin\US_{\beta_c}$. 
 
 \item Suppose $d_{m+6}=B$ (see Figure \ref{figure:10}). Then $d_{m}^1\ldots d_{m+6}^1=0101001$. Note by (\ref{eq:betac-delta}) that $\de(\beta_c)$ begins with $101001000$. Then by Proposition \ref{prop:21}  we have  $d_{m+7}^1d_{m+8}^1d_{m+9}^1=000$. So   by Lemma \ref{l42} it follows that \[d_{m+7}d_{m+8}d_{m+9}\in\set{ACA, CAC}.\]
   \begin{figure}[h!]
\begin{center}
\begin{tikzpicture}[
    scale=5,
    axis/.style={very thick, ->},
    important line/.style={thick},
    dashed line/.style={dashed, thin},
    pile/.style={thick, ->, >=stealth', shorten <=2pt, shorten
    >=2pt},
    every node/.style={color=black}
    ]
    \draw[axis] (-0.1,0)  -- (1.5,0) node(xline)[right]
        {};
        \node[] at (-0.3,0){$A=(0,0)$};
        
      \node[] at (-0.3, 0.2){$B=(1,0)$};
         \node[] at (-0.3,-0.2){$C=(0,1)$};
         
         \node[]at (0, 0.16){$\stackrel{\bullet}{d_{m+1}}$};
         
          \node[]at (0.2, -0.04){$\stackrel{\bullet }{d_{m+2}}$};
          
           \node[]at (0.4, 0.16){$\stackrel{\bullet}{ d_{m+3}}$};
     
  \node[]at (0.6, -0.04){$\stackrel{\bullet}{ d_{m+4}}$};
 
   \node[]at (0.8, -0.24){$\stackrel{\bullet}{ d_{m+5}}$};
   
    \node[]at (1.0, 0.16){$\stackrel{\bullet}{ d_{m+6}}$};

      \end{tikzpicture} 
\end{center}
\caption{The presentation of $d_{m+1}\ldots d_{m+6}=BABACB$.}\label{figure:10}
\end{figure}
   If $d_{m+7}d_{m+8}d_{m+9}=CAC$, then  $d_{m+4}^2\ldots d_{m+9}^2=010101$. By Proposition \ref{prop:21} and (\ref{eq:betac-delta}) this yields $(d_i)\notin\US_{\beta_c}$. 
   If 
   $d_{m+7}d_{m+8}d_{m+9}=ACA$, then   $d_{m+1}^\+=1$ and $d_{m+2}^\+\ldots d_{m+9}^\+=01011010$, which implies $d_{m+2}^\+d_{m+3}^\+\ldots \prec 01011011\ldots=\overline{\de(\beta_c)}$. By Proposition \ref{prop:21} this again gives $(d_i)\notin\US_{\beta_c}$.
 \end{itemize}
 
 Therefore, $d_{m+4}d_{m+5}=CA$. By Lemma \ref{l42} the next element  $d_{m+6}\in\set{B, C}$. Hence, 
 either $d_{m+4}d_{m+5}d_{m+6}=CAB=\Phi_A(\su_1)$ or $d_{m+4}d_{m+5}d_{m+6}=CAC=\Phi_A(\su_1^B)$.
This proves (i).
 
%
For (ii) we assume  $d_{m+1}\ldots d_{m+6}=\su_2^B=BABCAB$ (see Figure \ref{figure:11}) and $d_m\ne B$.
   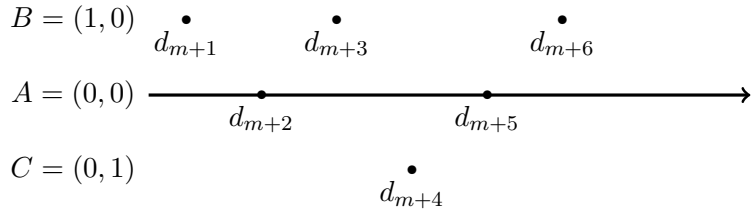
\begin{figure}[h!]
\begin{center}
\begin{tikzpicture}[
    scale=5,
    axis/.style={very thick, ->},
    important line/.style={thick},
    dashed line/.style={dashed, thin},
    pile/.style={thick, ->, >=stealth', shorten <=2pt, shorten
    >=2pt},
    every node/.style={color=black}
    ]
    \draw[axis] (-0.1,0)  -- (1.5,0) node(xline)[right]
        {};
        \node[] at (-0.3,0){$A=(0,0)$};
        
      \node[] at (-0.3, 0.2){$B=(1,0)$};
         \node[] at (-0.3,-0.2){$C=(0,1)$};
         
         \node[]at (0, 0.16){$\stackrel{\bullet}{d_{m+1}}$};
         
          \node[]at (0.2, -0.04){$\stackrel{\bullet }{d_{m+2}}$};
          
           \node[]at (0.4, 0.16){$\stackrel{\bullet}{ d_{m+3}}$};
     
  \node[]at (0.6, -0.24){$\stackrel{\bullet}{ d_{m+4}}$};
 
   \node[]at (0.8, -0.04){$\stackrel{\bullet}{ d_{m+5}}$};
   
    \node[]at (1.0, 0.16){$\stackrel{\bullet}{ d_{m+6}}$};

      \end{tikzpicture} 
\end{center}
\caption{The presentation of $d_{m+1}\ldots d_{m+6}=BABCAB$.}\label{figure:11}
\end{figure}
Then $d_m^1\ldots d_{m+6}^1=0101001$. Observe by (\ref{eq:betac-delta}) that $\de(\beta_c)$ begins with $101001000$. By Proposition \ref{prop:21} this implies that $d_{m+7}^1d_{m+8}^1 d_{m+9}^1=000$. So by Lemma \ref{l42} it follows that 
\[d_{m+7}d_{m+8}d_{m+9}\in\set{ACA, CAC}.\]
If $d_{m+7}d_{m+8}d_{m+9}=ACA$, then  $d_{m+4}^\+=1$ and $d_{m+5}^\+\ldots d_{m+9}^\+=01010$, which implies that $d_{m+5}^\+d_{m+6}^\+\ldots\prec 01011\ldots=\overline{\de(\beta_c)}$. By Proposition \ref{prop:21} this gives $(d_i)\notin\US_{\beta_c}$.

Therefore, $d_{m+7}d_{m+8}d_{m+9}=CAC=\Phi_A(\su_1^B)$. By a similar argument as in the proof of (i) one can show that the next block $d_{m+10}d_{m+11}d_{m+12}\in\set{\su_1^B, \su_1}=\set{BAB, BAC}$.    Hence, 
\[d_{m+7}\ldots d_{m+12}=CACBAB=\Phi_A(\su_2)\quad\textrm{or}\quad d_{m+7}\ldots d_{m+12}=CACBAC=\Phi_A(\su_2^B). \]
This establishes (ii).
\end{proof}

\begin{proof}[Proof of Proposition \ref{prop:betac-a}]
Since the proof of (ii) is similar, we only prove (i).
This will be done by induction on $k$. 
By Lemma \ref{lem:betac-a1}   it follows that the proposition holds for $k=1$ and $k=2$. Suppose the proposition holds for all $k\le n$ with $n\ge 2$, and we consider $k=n+1$. 

Suppose $d_{m+1}\ldots d_{m+3\cdot2^n}=\su_{n+1}^B$ and $d_m\ne d_{m+1}=B$. Then by Lemma \ref{lem:51} it follows that  $d_{m+1}^1\ldots d_{m+3\cdot2^n}^1=\t_{n+1}^+$.
Observe by (\ref{eq:betac-delta})  that $\de(\beta_c)=\t_{n+1}^+\re(\t_{n+1})\ldots$. Then  by Proposition \ref{prop:21}  and using   $d_m^1=0$ it follows that 
\begin{equation}\label{eq:betac-a1}
d^1_{m+3\cdot2^n+1}\ldots d^1_{m+3\cdot2^{n+1}}\lle \re(\t_{n+1})=\re(\t_{n-1}^+)\t_{n-1}\t_n^+,
\end{equation}
where the equality follows by using $\t_{i+1}=\t_i^+\re(\t_i^+)$ for any $i\ge 1$ and $\re^2(\a)=\a$ for any block $\a\in\Omega^*$.
Observe that 
\[
d_{m+1}\ldots d_{m+3\cdot2^n}=\su_{n+1}^B=\su_n^B\Phi_A(\su_n)=\su_n^B\Phi_A(\su_{n-1}^B)\su_{n-1}^B.
\]
Then $d_{m+3(2^{n-1}+2^{n-2})}=C$ is different from $d_{m+3(2^{n-1}+2^{n-2})+1}=B$. Furthermore, the block  $d_{m+3(2^{n-1}+2^{n-2})+1}\ldots d_{m+3\cdot2^n}=\su_{n-1}^B$. By the induction hypothesis it follows that the next block of length $3\cdot2^{n-2}$ is nearly determined, i.e., 
$
d_{m+3\cdot2^n+1}\ldots d_{m+3(2^n+2^{n-2})}\in\set{\Phi_A(\su_{n-1}^B), \Phi_A(\su_{n-1})}.
$
By (\ref{eq:betac-a1}) and   Lemma \ref{lem:51} it follows that 
\begin{equation}\label{eq:betac-a2}
d_{m+3\cdot2^n+1}\ldots d_{m+3(2^n+2^{n-2})}=\Phi_A(\su_{n-1}^B).
\end{equation}

Note that $d_{m+3\cdot2^n}=B$ is not equal to $d_{m+3\cdot2^n+1}=C$. Again, by (\ref{eq:betac-a2}) and the induction hypothesis it follows that 
$
d_{m+3(2^n+2^{n-2})+1}\ldots d_{m+3(2^n+2^{n-1})}\in\set{\su_{n-1}^B, \su_{n-1}}.
$
Using Lemma \ref{lem:51}, (\ref{eq:betac-a1}) and (\ref{eq:betac-a2}) we obtain that 
\begin{equation}\label{eq:betac-a3}
d_{m+3(2^n+2^{n-2})+1}\ldots d_{m+3(2^n+2^{n-1})}=\su_{n-1}.
\end{equation}
Therefore, by (\ref{eq:betac-a2}) and (\ref{eq:betac-a3}) it gives 
\[d_{m+3\cdot2^n+1}\ldots d_{m+3\cdot(2^n+2^{n-1})}=\Phi_A(\su_{n-1}^B)\su_{n-1}=\Phi_A(\su_{n}^B).\]

Again, using the induction hypothesis the next block of length $3\cdot2^{n-1}$ is nearly determined, i.e.,
$
d_{m+3(2^n+2^{n-1})+1}\ldots d_{m+3\cdot2^{n+1}}\in\set{\su_n, \su_n^B}.
$
Hence,  
\[
d_{m+3\cdot2^n+1}\ldots d_{m+3\cdot2^{n+1}}\in\set{\Phi_A(\su_n^B)\su_n, \Phi_A(\su_n^B)\su_n^B}=\set{\Phi_A(\su_{n+1}^B), \Phi_A(\su_{n+1})}.\]
This proves (i) for $k=n+1$. Hence, (i) follows by induction. 
\end{proof}

 \subsection{Type-$B$ and Type-$C$ Thue-Morse words in $\US_{\beta_c}$}
Similar to Definition \ref{def:type-a} we  define the Type-$B$ and Type-$C$ Thue-Morse words recursively.  First let $\Phi_B$ be defined by (see Figure \ref{figure:5})
 \[
 \Phi_B: \A\ra\A; \quad A\mapsto C,~ B\mapsto B, ~C\mapsto A.
 \]
 Accordingly, for a word $\d=d_1\ldots d_m\in\A^m$ we put $\Phi_B(\d)=\Phi_B(d_1)\ldots \Phi_B(d_m)$. 
 \begin{definition}\label{def:type-b} 
The \emph{Type-$B$ Thue-Morse words} $(\sv_n)$ are defined as follows. Let $\sv_0:=B, \sv_1:=CBA$, and for $n\ge 1$ we set
 \[
 \sv_{n+1}:=\sv_n^C\Phi_B(\sv_n^C),
 \]
 where $\sv_n^C$ is the word by changing the last digit of $\sv_n$ to $C$. 
 \end{definition}
 Then by Definition \ref{def:type-b} one can check that $\sv_2=CBCABA, \sv_3=CBCABC\,ABACBA,$ and
$\sv_4=CBCABC\, ABACBC\; ABACBACBCABA.
$
Furthermore, for each $n\ge 1$ the last digit of $\sv_n$ is $A$, and the length of $\sv_n$ equals $3\cdot 2^{n-1}$. 

Analogously,   
 let $\Phi_C$ be the  block map defined by (see Figure \ref{figure:6})
 \[
 \Phi_C: \A\ra\A; \quad A\mapsto B,~ B\mapsto A, ~C\mapsto C;
 \] and for a word $\d=d_1\ldots d_m\in\A^m$ we put $\Phi_C(\d)=\Phi_C(d_1)\ldots \Phi_C(d_m)$. 
\begin{definition}\label{def:type-c}
 The \emph{Type-$C$ Thue-Morse} words $(\sw_n)$  are defined as follows. Let $\sw_0:=C, \sw_1:=ACB$, and for $n\ge 1$ we set
 \[
 \sw_{n+1}:=\sw_n^A\Phi_C(\sw_n^A),
 \]
 where $\sw_n^A$ is the word by changing the last digit of $\sw_n$ to $A$. 
 \end{definition} Then by Definition \ref{def:type-c} we obtain that $\sw_2=ACABCB, \sw_3=ACABCA\,BCBACB$ and
$\sw_4= ACABCABCBACA\;BCBACBACABCB.
$
Furthermore, for each $n\ge 1$ the last digit of $\sw_n$ is $B$, and the length of $\sw_n$ equals $3\cdot 2^{n-1}$.
 
 The following lemma describes the coordinate words of $\sv_n, \Phi_B(\sv_n)$, $\sw_n$ and $\Phi_C(\sw_n)$.
  \begin{lemma}
 \label{lem:52}
 For any $n\ge 1$ we have 
 \begin{enumerate}[{\rm(i)}]
 \item $\sv_n^1=(010)^{2^{n-1}}$ and  $\sv_n^2=\t_n.$
 \item $\Phi_B(\sv_n)^1=(010)^{2^{n-1}}$ and $\Phi_B(\sv_n)^2=\re(\t_n)$.
 
  \item  $\sw_n^1=\re({\t_n})$ and $\sw_n^2=(010)^{2^{n-1}}$.
 \item $\Phi_C(\sw_n)^1=\t_n$ and $\Phi_C(\sw_n)^2=(010)^{2^{n-1}}$.

 \end{enumerate}
 \end{lemma}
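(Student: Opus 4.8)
The plan is to prove (i)--(iv) simultaneously by induction on $n$, paralleling the proof of Lemma~\ref{lem:51}. Before starting I would record two elementary facts about the block maps, read off directly from $A=(0,0)$, $B=(1,0)$, $C=(0,1)$: the map $\Phi_B$ fixes every first coordinate, i.e.\ $\Phi_B(\d)^1=\d^1$ for all $\d\in\A^*$ (since $A^1=C^1=0$ and $B^1=1$ are preserved by $A\mapsto C$, $C\mapsto A$, $B\mapsto B$), while $\Phi_C$ fixes every second coordinate, $\Phi_C(\d)^2=\d^2$. Moreover $\Phi_B$ and $\Phi_C$ are involutions, so applying them to the defining recursions gives $\Phi_B(\sv_{n+1})=\Phi_B(\sv_n^C)\,\sv_n^C$ and $\Phi_C(\sw_{n+1})=\Phi_C(\sw_n^A)\,\sw_n^A$.

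The base case $n=1$ is a direct computation: $\sv_1=CBA$ has $\sv_1^1=010=(010)^{2^0}$ and $\sv_1^2=100=\t_1$, while $\Phi_B(\sv_1)=ABC$ has first coordinate $010$ and second coordinate $001=\re(\t_1)$; similarly $\sw_1=ACB$ and $\Phi_C(\sw_1)=BCA$ give the statements (iii) and (iv).

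For the inductive step I would assume (i)--(iv) for $n$. Since the last digit of $\sv_n$ is $A=(0,0)$, replacing it by $C=(0,1)$ leaves the first-coordinate word unchanged and turns the terminal $0$ of $\sv_n^2=\t_n$ into a $1$; hence $(\sv_n^C)^1=(010)^{2^{n-1}}$ and $(\sv_n^C)^2=\t_n^+$. Likewise $\Phi_B(\sv_n^C)$ is $\Phi_B(\sv_n)$ with its terminal digit $\Phi_B(A)=C$ replaced by $\Phi_B(C)=A$, which preserves the first coordinate and flips the terminal $1$ of $\Phi_B(\sv_n)^2=\re(\t_n)$ to a $0$. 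The one extra combinatorial input is that flipping the last bit of $\re(\t_n)$ from $1$ to $0$ yields exactly $\re(\t_n^+)$: indeed $\re$ alters only the first and third entry of each length-$3$ block, the terminal block of $\t_n$ lies in $\{000,100\}$, and the four values $\re(000)=101$, $\re(100)=001$, $\re(001)=100$, $\re(101)=000$ make this immediate. Feeding these into $\sv_{n+1}=\sv_n^C\Phi_B(\sv_n^C)$ together with $\t_{n+1}=\t_n^+\re(\t_n^+)$, $\re(\t_{n+1})=\re(\t_n^+)\t_n^+$, and $(010)^{2^{n-1}}(010)^{2^{n-1}}=(010)^{2^n}$ gives (i) and (ii) for $n+1$. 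Parts (iii) and (iv) follow by the verbatim analogous argument with the two coordinates, the maps $\Phi_B$ and $\Phi_C$, and the terminal-digit operations interchanged: here $\sw_n$ ends in $B=(1,0)$, and replacing it by $A=(0,0)$ flips the last bit of $\sw_n^1=\re(\t_n)$ from $1$ to $0$ (again producing $\re(\t_n^+)$) while leaving the second-coordinate word untouched.

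The argument is entirely routine; the only point requiring care is the bookkeeping of how ``change the last digit'' acts on each coordinate word and how this meshes with the operations $(\cdot)^+$ and $\re$ on $\{0,1\}^*$, the crucial instance being the compatibility of $\re$ with flipping a terminal bit isolated above. I anticipate no genuine obstacle beyond keeping this bookkeeping straight.
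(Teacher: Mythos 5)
Your proposal is correct and follows essentially the same route as the paper: a direct induction on $n$ using the recursions $\sv_{n+1}=\sv_n^C\Phi_B(\sv_n^C)$, $\Phi_B(\sv_{n+1})=\Phi_B(\sv_n^C)\sv_n^C$ (and their Type-$C$ analogues) together with $\t_{n+1}=\t_n^+\re(\t_n^+)$. The only difference is that you make explicit the bookkeeping the paper leaves implicit — that $\Phi_B$ (resp.\ $\Phi_C$) fixes first (resp.\ second) coordinates and that flipping the terminal bit commutes with $\re$ — which is a welcome clarification but not a different argument.
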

\begin{proof}
Since the proofs of (iii) and (iv) are similar to (i) and (ii), we only prove the first two items.
By the definition of $\sv_n$ it follows that 
$
\sv_n^1=(010)^{2^{n-1}}$ and $\Phi_B(\sv_n)^1=(010)^{2^{n-1}}.$ 
So it suffices to prove 
\begin{equation}\label{eq:jh-1}
\sv_n^2=\t_n\quad\textrm{and}\quad \Phi_B(\sv_n)^2=\re(\t_n).
\end{equation}
 We will prove this by induction on $n$.

For $n=1$ we have $\sv_1=CBA$ and $\Phi_B(\sv_1)=ABC$. Then 
\[
\sv_1^2=100=\t_1\quad\textrm{and}\quad \Phi_B(\sv_1)^2=001=\re(\t_1).
\]
So, (\ref{eq:jh-1}) holds for $n=1$.
Now suppose (\ref{eq:jh-1}) holds for $n=k$, and we consider $n=k+1$. Note that 
\[
\sv_{k+1}=\sv_k^C\Phi_B(\sv_k^C),\quad \Phi_B(\sv_{k+1})=\Phi_B(\sv_k^C)\sv_k^C,
\]
where for the second equality we use $\Phi_B\circ\Phi_B(\d)=\d$ for any $\d\in\Omega^*$.
Moreover, for any $k\ge 1$ the word $\sv_k$ ends with digit $A$ and $\Phi_B(\sv_k)$ ends with digit  $C$. Therefore, by  the induction hypothesis it follows that 
\begin{align*}
\sv_{k+1}^2&=(\sv_k^C)^2(\Phi_B(\sv_k^C))^2=\t_k^+\re(\t_k^+)=\t_{k+1},\\
\Phi_B(\sv_{k+1})^2&=(\Phi_B(\sv_k^C))^2(\sv_k^C)^2=\re(\t_k^+)\t_k^+=\re(\t_{k+1}).
\end{align*}
This proves (\ref{eq:jh-1}) for $n=k+1$.  Hence, by induction we establish (i) and (ii).
\end{proof}

 For a word $\d=d_1\ldots d_m\in\A^m$ we write 
 \[ \d^\+:=d_1^\+\ldots d_m^\+=(d_1^1+d_1^2)\cdots(d_m^1+d_m^2).\]
  Then $\d^\oplus$ is a word of zeros and ones. Recall that for a word $\c=c_1\ldots c_m\in\set{0,1}^m$ its reflection is defined by $\overline{\c}=(1-c_1)\ldots (1-c_m)$.
 As a corollary of Lemmas \ref{lem:51} and \ref{lem:52}   we have the following.
 \begin{corollary}\label{cor:54}
 For any $n\ge 1$ we have 
 \begin{enumerate}[{\rm(i)}]
 \item $\su_n^\+=\Phi_A(\su_n)^\+=(101)^{2^{n-1}}$.
 
 \item $\sv_n^\+=\overline{\re(\t_n)}$ and $\Phi_B(\sv_n)^\+=\overline{\t_n}$.
 
 \item $\sw_n^\+=\overline{\t_n}$ and $\Phi_C(\sw_n)^\+=\overline{\re(\t_n)}$.
\end{enumerate}
 \end{corollary}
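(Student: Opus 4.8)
The plan is to deduce all three identities directly from Lemmas \ref{lem:51} and \ref{lem:52}, after one preliminary reduction: since every digit of $\A=\set{A,B,C}$ has at most one nonzero coordinate, for any word $\d\in\A^*$ the word $\d^\+$ is literally the position-wise sum of the $\set{0,1}$-words $\d^1$ and $\d^2$, and this sum is carry-free. Thus each assertion of the corollary becomes an identity between $\set{0,1}$-words built from $\t_n$, $\re(\t_n)$, $(010)^{2^{n-1}}$, $(101)^{2^{n-1}}$ and their reflections.

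The one computational ingredient I would record first is the block identity $a+\re(a)=101$ for every $a\in\Om$ (a four-case check against the definition of $\re$), together with the elementary fact that $\t_n$, and hence $\re(\t_n)$, is a concatenation of $2^{n-1}$ blocks from $\Om$. The latter is a quick induction: $\t_1=100\in\Om$; and if $\t_n\in\Om^{2^{n-1}}$ then, $\t_n$ ending in $0$, its last block is $000$ or $100$, so $\t_n^+\in\Om^{2^{n-1}}$, and since $\re$ maps $\Om$ into $\Om$ we get $\t_{n+1}=\t_n^+\re(\t_n^+)\in\Om^{2^n}$. Concatenating the block identity over the $2^{n-1}$ blocks of $\t_n$ yields $\t_n+\re(\t_n)=(101)^{2^{n-1}}$ coordinatewise; in particular $(\t_n)_j+(\re(\t_n))_j=1$ for $j\not\equiv 2\pmod 3$, while $(\t_n)_j=(\re(\t_n))_j=0$ for $j\equiv 2\pmod 3$ (the middle digit of every block of $\Om$ is $0$, which is also the fact $\la_{3k+2}=\ga_{3k+2}=0$). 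Part (i) is then immediate: by Lemma \ref{lem:51}, $\su_n^\+=\t_n+\re(\t_n)$ and $\Phi_A(\su_n)^\+=\re(\t_n)+\t_n$, both equal to $(101)^{2^{n-1}}$.

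For part (ii) I would use Lemma \ref{lem:52}(i)--(ii) to write $\sv_n^\+=(010)^{2^{n-1}}+\t_n$ and $\Phi_B(\sv_n)^\+=(010)^{2^{n-1}}+\re(\t_n)$, and then verify $(010)^{2^{n-1}}+\t_n=\overline{\re(\t_n)}$ position by position: at $j\equiv 2\pmod 3$ both sides equal $1$; at $j\not\equiv 2\pmod 3$ the left side is $(\t_n)_j$ and, by the block identity, $1-(\re(\t_n))_j=(\t_n)_j$ is the right side. Interchanging the roles of $\t_n$ and $\re(\t_n)$ (legitimate because $\re(\t_n)$ also vanishes at positions $\equiv 2\pmod 3$) gives $\Phi_B(\sv_n)^\+=\overline{\t_n}$. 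Part (iii) then needs no new work: by Lemma \ref{lem:52}(iii)--(iv), $\sw_n^\+=\re(\t_n)+(010)^{2^{n-1}}=\Phi_B(\sv_n)^\+=\overline{\t_n}$ and $\Phi_C(\sw_n)^\+=\t_n+(010)^{2^{n-1}}=\sv_n^\+=\overline{\re(\t_n)}$.

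I do not expect a real obstacle; the only spots needing care are the coordinatewise reflection bookkeeping in (ii) --- in particular that the sums $(010)^{2^{n-1}}+\t_n$ and $(010)^{2^{n-1}}+\re(\t_n)$ never produce a carry, which is exactly guaranteed by the vanishing of $\t_n$ and $\re(\t_n)$ at positions $\equiv 2\pmod 3$ --- and remembering to apply the block identity to $\re(\t_n)$ as well as to $\t_n$.
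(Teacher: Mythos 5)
Your proof is correct and is essentially the argument the paper intends: the paper states the corollary as an immediate consequence of Lemmas \ref{lem:51} and \ref{lem:52}, and the details you supply (carry-free coordinatewise addition $\d^\+=\d^1+\d^2$, the block identity $a+\re(a)=101$ on $\Om$, and $\t_n\in\Om^{2^{n-1}}$ by induction) are exactly the intended ones.
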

 
 Similar to Proposition \ref{prop:betac-a} we show that    the Type-$B$ and Type-$C$ Thue-Morse words are also forced  in $\US_{\beta_c}$.  
  \begin{proposition}
 \label{prop:betac-b}
 Let $(d_i)\in\US_{\beta_c}$. Then for any   $k\ge 1$ the following statements hold.
 \begin{enumerate}[{\rm(i)}]
 \item If   $d_{m+1}\ldots d_{m+3\cdot2^{k-1}}=\sv_k^C$ and $d_m\ne d_{m+1}$, then     
 \[d_{m+3\cdot2^{k-1}+1}\ldots d_{m+3\cdot2^k}\in\set{\Phi_B(\sv_k^C), \Phi_B(\sv_k)}.\]
    \item If  $d_{m+1}\ldots d_{m+3\cdot2^{k-1}}=\Phi_B(\sv_k^C)$ and $d_m\ne d_{m+1}$, then 
    \[d_{m+3\cdot2^{k-1}+1}\ldots d_{m+3\cdot2^k}=\set{\sv_k^C, \sv_k}.\]
     \item If   $d_{m+1}\ldots d_{m+3\cdot2^{k-1}}=\sw_k^A$ and $d_m\ne d_{m+1}$, then     
 \[d_{m+3\cdot2^{k-1}+1}\ldots d_{m+3\cdot2^k}\in\set{\Phi_C(\sw_k^A), \Phi_C(\sw_k)}.\]
    \item If  $d_{m+1}\ldots d_{m+3\cdot2^{k-1}}=\Phi_C(\sw_k^A)$ and $d_m\ne d_{m+1}$, then 
    \[d_{m+3\cdot2^{k-1}+1}\ldots d_{m+3\cdot2^k}=\set{\sw_k^A, \sw_k}.\]
 \end{enumerate}
 \end{proposition}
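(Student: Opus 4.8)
The plan is to reduce Proposition \ref{prop:betac-b} to Proposition \ref{prop:betac-a} via a cyclic symmetry of the lexicographic characterization in Proposition \ref{prop:21}. To each digit $d\in\A$ associate the triple $\pi(d):=(d^1,d^2,\overline{d^\+})$; then $\pi(A)=(0,0,1)$, $\pi(B)=(1,0,0)$ and $\pi(C)=(0,1,0)$, so $\pi(d)$ always has exactly one entry equal to $1$. Writing the three projections of a sequence $(d_i)\in\A^\N$ as $u^{(1)}=(d_i^1)$, $u^{(2)}=(d_i^2)$ and $u^{(3)}=(\overline{d_i^\+})$, Proposition \ref{prop:21} states exactly that $(d_i)\in\US_\beta$ if and only if for every $j\in\set{1,2,3}$ and every $n\ge0$ one has $u^{(j)}_n=0\Rightarrow\si^n(u^{(j)})\prec\de(\beta)$ (for $j=3$ one uses that $u^{(3)}_n=0\iff d_n^\+=1$ and that $\si^n(u^{(3)})=\overline{d_{n+1}^\+d_{n+2}^\+\cdots}\prec\de(\beta)$ is equivalent to $d_{n+1}^\+d_{n+2}^\+\cdots\succ\overline{\de(\beta)}$). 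This system of conditions is symmetric under permuting $u^{(1)},u^{(2)},u^{(3)}$. Now let $\rho\colon\A\ra\A$ be the $3$-cycle $A\mapsto B\mapsto C\mapsto A$; a one-line check gives $\pi(\rho(d))=(\overline{d^\+},d^1,d^2)$, i.e. replacing $(d_i)$ by $(\rho(d_i))$ cyclically permutes $u^{(1)},u^{(2)},u^{(3)}$. Hence the coordinatewise action of $\rho$, and of $\rho^2=\rho^{-1}$, maps $\US_\beta$ bijectively onto itself for every $\beta\in(1,2)$, in particular for $\beta=\beta_c$.

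Next I would check that $\rho$ intertwines the three families of Thue-Morse words with the three letter maps. The commutation relations $\rho\circ\Phi_A=\Phi_B\circ\rho$ and $\rho\circ\Phi_B=\Phi_C\circ\rho$ are verified directly on the three letters. Then, by induction on $n$ using the recursions $\su_{n+1}=\su_n^B\Phi_A(\su_n^B)$, $\sv_{n+1}=\sv_n^C\Phi_B(\sv_n^C)$ and $\sw_{n+1}=\sw_n^A\Phi_C(\sw_n^A)$, one obtains
\[
\rho(\su_n)=\sv_n,\qquad\rho(\sv_n)=\sw_n,\qquad\rho(\su_n^B)=\sv_n^C,\qquad\rho(\sv_n^C)=\sw_n^A\qquad(n\ge0).
\]
The base cases $\rho(\su_1)=\rho(BAC)=CBA=\sv_1$ and $\rho(\sv_1)=\rho(CBA)=ACB=\sw_1$ are immediate; the only delicate point in the induction is keeping track of the final digit, since $\su_n$, $\sv_n$, $\sw_n$ end respectively in $C$, $A$, $B$, which is precisely what makes $\rho(\su_n^B)=\sv_n^C$ and $\rho(\sv_n^C)=\sw_n^A$ hold.

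Finally, I would deduce the four statements. Parts (i) and (ii) are the $\rho$-images of parts (i) and (ii) of Proposition \ref{prop:betac-a}: given $(d_i)\in\US_{\beta_c}$ satisfying the hypothesis of (i), the sequence $(\rho^2(d_i))$ again lies in $\US_{\beta_c}$ and has the block $\rho^2(\sv_k^C)=\su_k^B$ at positions $m+1,\dots,m+3\cdot2^{k-1}$ with distinct preceding digit, so Proposition \ref{prop:betac-a}(i) applies; pushing the conclusion forward by $\rho$ and using $\rho\circ\Phi_A=\Phi_B\circ\rho$ turns $\set{\Phi_A(\su_k^B),\Phi_A(\su_k)}$ into $\set{\Phi_B(\sv_k^C),\Phi_B(\sv_k)}$, which is the conclusion of (i); part (ii) is handled identically. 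Parts (iii) and (iv) are obtained in the same way, this time as the $\rho^2$-images of Proposition \ref{prop:betac-a}(i),(ii): one applies Proposition \ref{prop:betac-a} to $(\rho(d_i))$ (for which $\rho(\sw_k^A)=\su_k^B$) and transports the conclusion back by $\rho^2$, using $\rho^2\circ\Phi_A=\Phi_C\circ\rho^2$ together with $\rho^2(\su_k^B)=\sw_k^A$. The only genuinely substantive step is the first paragraph — recognizing that Proposition \ref{prop:21} is invariant under the coordinate $3$-cycle, so that $\rho$ preserves $\US_{\beta_c}$; everything afterwards is routine symbol pushing. If one preferred a self-contained argument, parts (i)--(iv) could instead be proved by repeating verbatim the induction behind Proposition \ref{prop:betac-a}, with $\t_n$, $\re(\t_n)$ and the coordinate roles permuted according to Lemma \ref{lem:52} and Corollary \ref{cor:54}; the obstacle there would be the same digit-by-digit bookkeeping already met in Lemma \ref{lem:betac-a1} and its inductive extension.
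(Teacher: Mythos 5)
Your argument is correct, but it takes a genuinely different route from the paper. The paper proves Proposition \ref{prop:betac-b} by redoing, symbol for symbol, the induction behind Proposition \ref{prop:betac-a}: it establishes the base cases $k=1,2$ for item (i) by explicit digit-chasing (Lemma \ref{lem:betac-b1}, the analogue of Lemma \ref{lem:betac-a1}), runs the same inductive step using Lemma \ref{lem:52} in place of Lemma \ref{lem:51}, and declares items (ii)--(iv) ``similar.'' You instead observe that Proposition \ref{prop:21} is invariant under the cyclic permutation of the three binary projections $(d_i^1)$, $(d_i^2)$, $(\overline{d_i^\+})$, that the letter map $\rho\colon A\mapsto B\mapsto C\mapsto A$ realizes this permutation (since $\pi(\rho(d))=(\overline{d^{\+}},d^1,d^2)$, each $\pi(d)$ having exactly one entry equal to $1$), and hence that $\rho$ acts on $\US_{\beta_c}$; combined with the easily verified intertwinings $\rho\circ\Phi_A=\Phi_B\circ\rho$, $\rho\circ\Phi_B=\Phi_C\circ\rho$ and the inductive identities $\rho(\su_n^B)=\sv_n^C$, $\rho(\sv_n^C)=\sw_n^A$ (where the bookkeeping of final digits $C,A,B$ is exactly what makes these hold), all four items follow by conjugating Proposition \ref{prop:betac-a} by $\rho$ or $\rho^2$. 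I have checked the key identities ($\pi$-values, the commutation relations on each of the three letters, the base cases $\rho(BAC)=CBA$ and $\rho(CBA)=ACB$, and the transport of the conclusion set $\set{\Phi_A(\su_k^B),\Phi_A(\su_k)}$ to $\set{\Phi_B(\sv_k^C),\Phi_B(\sv_k)}$), and they are all correct; the hypothesis $d_m\ne d_{m+1}$ is preserved because $\rho$ is a bijection of $\A$. What your approach buys is economy and insight: it makes the threefold symmetry of the problem (which the paper only gestures at through the names Type-$A$, Type-$B$, Type-$C$ and the repeated phrase ``the proofs are similar'') into an actual group action, so that one induction suffices for all six families of forced blocks; it would equally shorten Lemma \ref{lem:43}, Lemma \ref{prop:betan} and Lemma \ref{l611}. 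The paper's direct route avoids having to set up the symmetry formalism but pays for it with duplicated case analysis.
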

 Since the proofs for  different items in Proposition \ref{prop:betac-b} are similar, we only prove the first item. This will be done  by induction on $k$. First we  consider  the cases $k=1$ and $k=2$.

 \begin{lemma}\label{lem:betac-b1}
  Let $(d_i)\in\US_{\beta_c}$.  
  \begin{enumerate}[{\rm(i)}]
 \item If   $d_{m+1}d_{m+2} d_{m+3}=\sv_1^C$ and $d_m\ne d_{m+1}$, then     
 $d_{m+4}d_{m+5} d_{m+6}\in\set{\Phi_B(\sv_1^C), \Phi_B(\sv_1)}.$
 
 \item If   $d_{m+1}\ldots d_{m+6}=\sv_2^C$ with $d_m\ne d_{m+1}$, then     
 $d_{m+7}\ldots d_{m+12}\in\set{\Phi_B(\sv_2^C), \Phi_B(\sv_2)}.$

 \end{enumerate}
 \end{lemma}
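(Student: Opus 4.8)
The plan is to deduce both parts from Lemma \ref{lem:betac-a1} by making explicit the colour symmetry that the paper uses informally whenever it writes ``the proof of the other cases is similar''. The key observation is that for every bijection $\Phi$ of the alphabet $\A=\set{A,B,C}$ the induced map $(d_i)\mapsto(\Phi(d_i))$ carries $\US_{\beta_c}$ onto itself (indeed this holds for every $\beta\in(1,2)$). This is immediate from Proposition \ref{prop:21}: rewriting its third condition via $\overline{d_n^\+}=1-d_n^\+$ and the equivalence $(c_i)\succ\overline{\de(\beta)}\iff(\overline{c_i})\prec\de(\beta)$, all three conditions acquire the common shape ``whenever the $n$-th entry of the sequence is $0$, its tail after position $n$ is $\prec\de(\beta)$'', applied respectively to the three binary sequences $(d_i^1)$, $(d_i^2)$, $(\overline{d_i^\+})$; and at each position precisely one of $d_i^1,d_i^2,\overline{d_i^\+}$ equals $1$ (the first, second, third according as $d_i$ is $B$, $C$, $A$). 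Thus a sequence of $\US_{\beta_c}$ is just an assignment of one of three ``colours'' at each coordinate, subject to a rule symmetric in the colours, and permuting $\A$ permutes the colours.

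With this I would introduce the $3$-cycle $\Psi$ on $\A$ given by $\Psi(C)=B,\ \Psi(B)=A,\ \Psi(A)=C$ and check that it intertwines the Type-$B$ and Type-$A$ Thue--Morse words:
\[
\Psi(\sv_n)=\su_n,\quad \Psi(\sv_n^C)=\su_n^B,\quad \Psi(\Phi_B(\sv_n^C))=\Phi_A(\su_n^B),\quad \Psi(\Phi_B(\sv_n))=\Phi_A(\su_n)\qquad(n\ge 1).
\]
For $n=1$ this is the direct check $\Psi(CBA)=BAC$, $\Psi(CBC)=BAB$, $\Psi(ABA)=CAC$, $\Psi(ABC)=CAB$; the induction step uses $\Psi\circ\Phi_B=\Phi_A\circ\Psi$ (both equal the transposition fixing $C$), the recursions $\sv_{n+1}=\sv_n^C\Phi_B(\sv_n^C)$, $\su_{n+1}=\su_n^B\Phi_A(\su_n^B)$, and the facts that $\sv_n$ ends in $A$ while $\su_n$ ends in $C$.

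Part (i) then follows in two lines. Given $(d_i)\in\US_{\beta_c}$ with $d_{m+1}d_{m+2}d_{m+3}=\sv_1^C$ and $d_m\ne d_{m+1}$, set $e_i:=\Psi(d_i)$; by the first paragraph $(e_i)\in\US_{\beta_c}$, while $e_{m+1}e_{m+2}e_{m+3}=\Psi(\sv_1^C)=\su_1^B$ and $e_m\ne e_{m+1}$ because $\Psi$ is injective. Lemma \ref{lem:betac-a1}(i) gives $e_{m+4}e_{m+5}e_{m+6}\in\set{\Phi_A(\su_1^B),\Phi_A(\su_1)}$, and applying $\Psi^{-1}$ together with the $n=1$ intertwining identities yields $d_{m+4}d_{m+5}d_{m+6}\in\set{\Phi_B(\sv_1^C),\Phi_B(\sv_1)}$. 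Part (ii) is the same argument with $\Psi(\sv_2^C)=\su_2^B$ in place of $\Psi(\sv_1^C)=\su_1^B$ and Lemma \ref{lem:betac-a1}(ii) in place of (i).

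The only non-routine step is establishing the colour symmetry of the first paragraph; everything after it is mechanical, so the ``hard part'' is really just spotting that $\US_{\beta_c}$ is $S_3$-invariant. (If one prefers to stay within the style of Section \ref{sec:admissible} and argue directly, one repeats the case analysis of Lemma \ref{lem:betac-a1} with the coordinate projections permuted: from $d_{m+1}d_{m+2}d_{m+3}=CBC$ the second-coordinate inequality forces $d_{m+4},d_{m+5}\in\set{A,B}$, Lemma \ref{l42} then leaves $d_{m+4}d_{m+5}\in\set{AB,BA}$, and the branch $BA$ is ruled out exactly as the branch $AC$ was in Lemma \ref{lem:betac-a1}, using that $\de(\beta_c)$ begins with $101001000$ and $\overline{\de(\beta_c)}$ with $010110111$. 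In that route the only difficulty is clerical --- keeping track of which of the three projections delivers the contradiction in each sub-case --- which is precisely what the symmetry reduction sidesteps.)
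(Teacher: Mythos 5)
Your proof is correct, and it takes a genuinely different route from the paper. The paper proves Lemma \ref{lem:betac-b1} by redoing the case analysis of Lemma \ref{lem:betac-a1} from scratch: starting from $d_m^2\ldots d_{m+3}^2=0101$ it forces $d_{m+4}d_{m+5}\in\set{AB,BA}$ via Proposition \ref{prop:21} and Lemma \ref{l42}, then kills the branch $BA$ by chasing the three projections $d^1,d^2,d^\+$ against $\de(\beta_c)=101001000\ldots$ and $\overline{\de(\beta_c)}=010110\ldots$ exactly as in the Type-$A$ case (this is your parenthetical fallback route). You instead make the paper's ubiquitous ``the proofs for the different cases are similar'' into an actual lemma: the three conditions of Proposition \ref{prop:21} are the same condition applied to the indicator sequences of $B$, $C$ and $A$ (namely $(d_i^1)$, $(d_i^2)$ and $(\overline{d_i^\+})$, after rewriting the third condition by reflection), so every permutation of $\A$ induces a bijection of $\US_\beta$; the $3$-cycle $\Psi$ then intertwines the Type-$B$ and Type-$A$ Thue--Morse words via $\Psi\circ\Phi_B=\Phi_A\circ\Psi$ and the two recursions, and Lemma \ref{lem:betac-b1} becomes the image of Lemma \ref{lem:betac-a1} under $\Psi^{-1}$. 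I checked the key points: the symmetric restatement of Proposition \ref{prop:21} is correct, $\Psi(\sv_1^C)=BAB=\su_1^B$, $\Psi(\sv_2^C)=BABCAB=\su_2^B$, and the induction for $\Psi(\sv_n)=\su_n$ goes through because $\sv_n$ ends in $A$ and $\su_n$ in $C$. Your approach buys a shorter proof that would equally dispatch items (ii)--(iv) of Proposition \ref{prop:betac-b} and the other ``similar'' cases throughout Sections \ref{sec:S=D}--\ref{sec:admissible}; the paper's direct computation is self-contained but repetitive. The only thing to be careful about if this were inserted into the paper is that the $S_3$-invariance of $\US_\beta$ should be recorded as a standalone observation right after Proposition \ref{prop:21}, since nothing in the paper states it explicitly.
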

 \begin{proof}
Suppose $d_{m+1}d_{m+2}d_{m+3}=\sv_1^C=CBC$ and $d_m\ne C$. Then $d_{m}^2\ldots d_{m+3}^2=0101$. Note by (\ref{eq:betac-delta}) that $\de(\beta_c)$ begins with $10100$. By Proposition \ref{prop:21} it follows that $d_{m+4}^2d_{m+5}^2=00$, which implies $d_{m+4}d_{m+5}\in\set{A, B}^2$. Then by Lemma \ref{l42} we have only two choices: either $d_{m+4}d_{m+5}=BA$ or $d_{m+4}d_{m+5}=AB$. We claim that the first case can not happen.

Suppose  $d_{m+4}d_{m+5}=BA$. Then by Lemma \ref{l42} we have $d_{m+6}\in\set{B, C}$. If $d_{m+6}=B$, then in view of Figure \ref{figure:12} we have $d_{m+1}^1\ldots d_{m+6}^1=010101$. Then $d_{m+1}^1=0$ and $d_{m+2}d_{m+3}\ldots\succ 10100\ldots=\de(\beta_c)$. So $(d_i)\notin\US_{\beta_c}$ by Proposition \ref{prop:21}.

   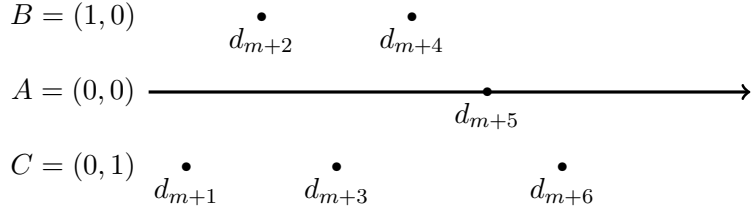
\begin{figure}[h!]
\begin{center}
\begin{tikzpicture}[
    scale=5,
    axis/.style={very thick, ->},
    important line/.style={thick},
    dashed line/.style={dashed, thin},
    pile/.style={thick, ->, >=stealth', shorten <=2pt, shorten
    >=2pt},
    every node/.style={color=black}
    ]
    \draw[axis] (-0.1,0)  -- (1.5,0) node(xline)[right]
        {};
        \node[] at (-0.3,0){$A=(0,0)$};
        
      \node[] at (-0.3, 0.2){$B=(1,0)$};
         \node[] at (-0.3,-0.2){$C=(0,1)$};
         
         \node[]at (0, -0.24){$\stackrel{\bullet}{d_{m+1}}$};
         
          \node[]at (0.2, 0.16){$\stackrel{\bullet }{d_{m+2}}$};
          
           \node[]at (0.4, -0.24){$\stackrel{\bullet}{ d_{m+3}}$};
     
  \node[]at (0.6, 0.16){$\stackrel{\bullet}{ d_{m+4}}$};
 
   \node[]at (0.8, -0.04){$\stackrel{\bullet}{ d_{m+5}}$};
   
    \node[]at (1.0, -0.24){$\stackrel{\bullet}{ d_{m+6}}$};

      \end{tikzpicture} 
\end{center}
\caption{The presentation of $d_{m+1}\ldots d_{m+6}=CBCBAC$.}\label{figure:12}
\end{figure}
 If $d_{m+6}=C$, then in view of  Figure \ref{figure:12} we have   $d_{m}^2\ldots d_{m+6}^2=0101001$. Since by (\ref{eq:betac-delta})  that $\de(\beta_c)$ begins with $101001000$, by Proposition \ref{prop:21}  it follows that $d_{m+7}^2d_{m+8}^2d_{m+9}^2=000$. This gives $d_{m+7}d_{m+8}d_{m+9}\in\set{A,B}^3$. Then by Lemma \ref{l42} we have $d_{m+7}d_{m+8}d_{m+9}\in\set{BAB, ABA}$.
\begin{itemize}
\item If $d_{m+7}d_{m+8}d_{m+9}=BAB$, then in view of Figure \ref{figure:12} we have $d_{m}^1\ldots d_{m+9}^1=010100101$. This implies $d_m^1=0$ and $d_{m+1}^1d_{m+2}^1\ldots\succ \de(\beta)$. Thus $(d_i)\notin\US_{\beta_c}$ by Proposition \ref{prop:21}.

\item If $d_{m+7}d_{m+8}d_{m+9}=ABA$, then  $d_{m+4}^\+=1$ and $d_{m+5}^\+\ldots d_{m+9}^\+=01010$, which implies $d_{m+5}^\+d_{m+6}^\+\ldots \prec \overline{\de(\beta_c)}=01011\ldots$. Again, by Proposition \ref{prop:21}   we have $(d_i)\notin\US_{\beta_c}$.
\end{itemize}

Therefore, $d_{m+4}d_{m+5}=AB$. So by Lemma \ref{l42} we have $d_{m+6}\in\set{A, C}$. Hence, 
\[
d_{m+4}d_{m+5}d_{m+6}\in\set{ABA, ABC}=\set{\Phi_B(\sv_1^C), \Phi_B(\sv_1)}
\]
as required.

  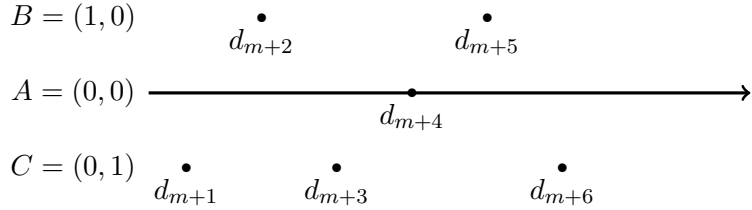
\begin{figure}[h!]
\begin{center}
\begin{tikzpicture}[
    scale=5,
    axis/.style={very thick, ->},
    important line/.style={thick},
    dashed line/.style={dashed, thin},
    pile/.style={thick, ->, >=stealth', shorten <=2pt, shorten
    >=2pt},
    every node/.style={color=black}
    ]
    \draw[axis] (-0.1,0)  -- (1.5,0) node(xline)[right]
        {};
        \node[] at (-0.3,0){$A=(0,0)$};
        
      \node[] at (-0.3, 0.2){$B=(1,0)$};
         \node[] at (-0.3,-0.2){$C=(0,1)$};
         
         \node[]at (0, -0.24){$\stackrel{\bullet}{d_{m+1}}$};
         
          \node[]at (0.2, 0.16){$\stackrel{\bullet }{d_{m+2}}$};
          
           \node[]at (0.4, -0.24){$\stackrel{\bullet}{ d_{m+3}}$};
     
  \node[]at (0.6, -0.04){$\stackrel{\bullet}{ d_{m+4}}$};
 
   \node[]at (0.8, 0.16){$\stackrel{\bullet}{ d_{m+5}}$};
   
    \node[]at (1.0, -0.24){$\stackrel{\bullet}{ d_{m+6}}$};

      \end{tikzpicture} 
\end{center}
\caption{The presentation of $d_{m+1}\ldots d_{m+6}=CBCABC$.}\label{figure:14}
\end{figure}

%
 For (ii) let  $d_{m+1}\ldots d_{m+6}=\sv_2^C=CBCABC$ (see Figure \ref{figure:14}) and $d_m\ne C$.
   Then $d_{m}^2\ldots d_{m+6}^2=0101001$. By (\ref{eq:betac-delta}) and Proposition \ref{prop:21} it follows that 
$d_{m+7}^2d_{m+8}^2d_{m+9}^2=000$. Using Lemma \ref{l42} we have $d_{m+7}d_{m+8}d_{m+9}\in\set{BAB, ABA}$.

Suppose $d_{m+7}d_{m+8}d_{m+9}=BAB$. Then in view of Figure \ref{figure:14} we have $d_{m+4}^1\ldots d_{m+9}^1=010101$.  This implies $(d_i)\notin\US_{\beta_c}$ by (\ref{eq:betac-delta}) and Proposition \ref{prop:21}.

Therefore, $d_{m+7}d_{m+8}d_{m+9}=ABA=\Phi_B(\sv_1^C)$. By the same argument as in the proof of (i) we can deduce that the next block
$d_{m+10}d_{m+11}d_{m+12}\in\set{\sv_1^C, \sv_1}=\set{CBC, CBA}.$
Whence, 
\[
d_{m+7}\ldots d_{m+12}\in\set{ABACBC, ABACBA}=\set{\Phi_B(\sv_2), \Phi_B(\sv_2^C)}.
\]
This completes the proof.
 \end{proof}
 
 \begin{proof}[Proof of Proposition \ref{prop:betac-b}]
 As remarked before  the proofs for different items are similar. So in the following we only prove (i) by using the same strategy as in the proof of Proposition \ref{prop:betac-a}. We do this now by induction on $k$. First by Lemma \ref{lem:betac-b1} it follows that  (i) holds for $k=1$ and $k=2$. Now suppose (i) holds for all $k\le n$ with $n\ge 2$, and we consider $k=n+1$.
 
  Suppose $d_{m+1}\ldots d_{m+3\cdot2^n}=\sv_{n+1}^C$ and $d_m\ne d_{m+1}=C$. Then $d_{m}^2=0$, and by Lemma \ref{lem:52} we have $d_{m+1}^2\ldots d_{m+3\cdot2^n}^2=(\sv_{n+1}^C)^2=\t_{n+1}^+$. Note by (\ref{eq:betac-delta}) that $\de(\beta_c)$ begins with $\t_{n+1}^+\re(\t_{n+1})$. By Proposition \ref{prop:21} it follows that 
 \begin{equation}\label{eq:betac-b}
 d_{m+3\cdot2^n+1}^2\ldots d_{m+3\cdot2^{n+1}}^2\lle \re(\t_{n+1})=\re(\t_{n-1}^+)\t_{n-1}\t_n^+.
 \end{equation}
 Observe that $d_{m+1}\ldots d_{m+3\cdot2^n}=\sv_{n+1}^C=\sv_n^C\Phi_B(\sv_{n-1}^C)\sv_{n-1}^C$. Then by the induction hypothesis it follows that  
 $d_{m+3\cdot2^n+1}\ldots d_{m+3(2^n+2^{n-2})}\in\set{\Phi_B(\sv_{n-1}^C), \Phi_B(\sv_{n-1})}.$
  By (\ref{eq:betac-b}) and Lemma \ref{lem:52} it follows that 
 \begin{equation}\label{eq:betac-b1}
 d_{m+3\cdot2^n+1}\ldots d_{m+3(2^n+2^{n-2})}=\Phi_B(\sv_{n-1}^C).
 \end{equation}
 Again by the induction hypothesis the next block of length $3\cdot2^{n-2}$ is  nearly determined, i.e., 
 $d_{m+3(2^n+2^{n-2})+1}\ldots d_{m+3(2^n+2^{n-1})}\in\set{\sv_{n-1}, \sv_{n-1}^C}.$ In terms of Lemma \ref{lem:52}, by (\ref{eq:betac-b}) and (\ref{eq:betac-b1}) we conclude that 
 \begin{equation}\label{eq:betac-b2}
 d_{m+3(2^n+2^{n-2})+1}\ldots d_{m+3(2^n+2^{n-1})}=\sv_{n-1}.
 \end{equation}
 Therefore, by (\ref{eq:betac-b1}) and (\ref{eq:betac-b2}) we have $d_{m+3\cdot2^n+1}\ldots d_{m+3(2^n+2^{n-1})}=\Phi_B(\sv_{n-1}^C)\sv_{n-1}=\Phi_B(\sv_{n}^C)$. Using the induction hypothesis the next block of length $3\cdot2^{n-1}$ is  nearly determined, i.e., $d_{m+3(2^n+2^{n-1})+1}\ldots d_{m+3\cdot2^{n+1}}\in\set{\sv_{n}^C, \sv_n}$. Hence, 
 \[
 d_{m+3\cdot2^n+1}\ldots d_{m+3\cdot2^{n+1}}\in\set{\Phi_B(\sv_n^C)\sv_n^C, \Phi_B(\sv_n^C)\sv_n}=\set{\Phi_B(\sv_{n+1}), \Phi_B(\sv_{n+1}^C)}.
 \]
 This proves (i) for $k=n+1$. Hence, by induction this completes the proof.
 \end{proof}

\section{Proof of Theorem \ref{thm:3}}\label{s6}
 In this section we will show that the number $\beta_{c}$ defined in (\ref{eq:betaG-betaC}) is transcendental and it is the critical base for the intrinsic univoque set $\U_\beta$, and then prove Theorem \ref{thm:3}.  
Motived by the work of Allouche and Cosnard \cite{Allouche_Cosnard_2000} (see also, \cite{Kong_Li_2015})  we first prove the transcendental of $\beta_c$ by using the following well known result from Mahler \cite{Mahler_1976}.
\begin{theorem}[Mahler, 1976]\label{th:mahler}
If $z$ is an algebraic number in the open unit disc, then the number 
\[
Z:=\sum_{i=1}^\f\tau_i z^i
\]
is transcendental,
where $(\tau_i)_{i=0}^\f$ is the classical Thue-Morse sequence.
\end{theorem}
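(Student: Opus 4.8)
The plan is to prove this by \emph{Mahler's method}, treating $F(z):=\sum_{i=0}^\f \tau_i z^i$ (so that $Z=F(z)-\tau_0=F(z)$, since $\tau_0=0$ for the classical Thue-Morse sequence) as a \emph{Mahler function}. First I would record the functional equation that $F$ satisfies. Using the defining relations $\tau_{2n}=\tau_n$ and $\tau_{2n+1}=1-\tau_n$ and splitting the series into its even- and odd-indexed parts, one obtains
\[
F(z)=(1-z)\,F(z^2)+\frac{z}{1-z^2}.
\]
This is a linear Mahler equation of the form $F(z)=a(z)F(z^2)+b(z)$ with $a,b\in\mathbb Q(z)$ whose only poles lie at $z=\pm1$. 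Iterating it rewrites $F(z)$ in terms of $F(z^{2^k})$ together with a rational function of $z$, and this iteration is the device that drives the whole argument.

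Second, I would establish that $F$ is transcendental \emph{over} $\mathbb Q(z)$. Two standard routes are available. The structural one invokes the rigidity of Mahler functions: a power series satisfying a linear Mahler equation $F(z)=a(z)F(z^2)+b(z)$ that is algebraic over $\mathbb C(z)$ must in fact be rational, and $F$ is not rational since its coefficient sequence, being the Thue-Morse sequence, is not eventually periodic. Alternatively, one checks that the circle $|z|=1$ is a natural boundary for $F$: the poles of $b(z)=z/(1-z^2)$ at $z=\pm1$ propagate under the substitution $z\mapsto z^2$ to a set of roots of unity dense on the circle, so $F$, possessing a natural boundary, cannot be algebraic (an algebraic function over $\mathbb C(z)$ has only finitely many singular points). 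Either way $F$ is transcendental over $\mathbb Q(z)$.

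Third comes the arithmetic core. Fix an algebraic $\alpha$ with $0<|\alpha|<1$ and suppose, for contradiction, that $F(\alpha)$ is algebraic. By Siegel's lemma I would construct a nonzero polynomial $P(X,Y)\in\mathbb Z[X,Y]$ of controlled bidegree such that the power series $E(z):=P(z,F(z))$ vanishes to high order $M$ at $z=0$; because $F$ is transcendental over $\mathbb Q(z)$, $E$ is not the zero series, so in fact $E(z)=c\,z^{M}+\cdots$ with $c\ne0$ and $M$ bounded in terms of the bidegree. Evaluating along $z_k:=\alpha^{2^k}$, which stays in the open unit disc and tends to $0$ without ever meeting a pole of the iterated rational functions, the functional equation lets me rewrite each $F(\alpha^{2^k})$ as a rational expression in $\alpha$ and $F(\alpha)$; hence every $E(\alpha^{2^k})$ is an algebraic number whose degree, denominator and house stay effectively controlled as $k$ grows. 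On one hand $|E(\alpha^{2^k})|\asymp|c|\,|\alpha|^{2^k M}$ is super-exponentially small; on the other, $E(\alpha^{2^k})\ne0$ for all large $k$ (since $z_k\to0$ enters a punctured neighborhood of $0$ on which $E$ has no zero), so the Liouville/fundamental inequality furnishes a lower bound for $|E(\alpha^{2^k})|$ that decays only exponentially in the height. For large $k$ these two estimates clash, giving the contradiction and proving $Z=F(\alpha)$ transcendental.

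The main obstacle is exactly this third step: carrying out the Siegel's-lemma construction with explicit control of the vanishing order $M$ against the bidegree of $P$, and then tracking the denominators and houses of the algebraic numbers $E(\alpha^{2^k})$ sharply enough that the elementary lower bound genuinely beats the analytic upper bound. The transcendence of $F$ over the function field (step two) is precisely what guarantees $E\not\equiv0$, and hence the crucial non-vanishing of the values $E(\alpha^{2^k})$; without it the method collapses.
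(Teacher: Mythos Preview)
The paper does not prove this theorem at all: it is quoted from Mahler's \emph{Lectures on transcendental numbers} and used as a black box in the proof of Proposition~\ref{prop:transcendental}. So there is nothing to compare on the paper's side.

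Your outline is the classical \emph{Mahler method} and is essentially how the result is proved in the cited source. The functional equation $F(z)=(1-z)F(z^2)+z/(1-z^2)$ is correct, and both of your arguments for the transcendence of $F$ over $\mathbb Q(z)$ are standard and valid. In the third step the scheme is right, but be aware that the bookkeeping is the whole proof: when you express $F(\alpha^{2^k})$ rationally in $\alpha$ and $F(\alpha)$ via iteration, the logarithmic height of $E(\alpha^{2^k})$ already grows like a constant times $2^k$, so the Liouville lower bound is itself of the shape $\exp(-c\,2^k)$, not merely ``exponential in $k$''. The contradiction therefore does not come from a qualitative rate mismatch but from arranging, via the free parameters in Siegel's lemma, that the vanishing order $M$ satisfies $M\log(1/|\alpha|)$ strictly exceeds the constant governing the height growth. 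Your last paragraph acknowledges this, but the phrasing ``decays only exponentially in the height'' slightly understates the difficulty; both sides decay like $\exp(-\text{const}\cdot 2^k)$ and one must compare the constants. With that caveat, the plan is sound.
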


\begin{proposition}\label{prop:transcendental}
$\beta_c$ is transcendental. 
\end{proposition}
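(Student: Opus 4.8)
The plan is to turn the defining relation \eqref{eq:betaG-betaC} into an explicit \emph{rational} expression, in the variable $z:=1/\beta_c$, for a Thue--Morse power series, and then to invoke Theorem \ref{th:mahler}. Set $z:=1/\beta_c\in(0,1)$; since the algebraic numbers form a field, $z$ is algebraic if and only if $\beta_c$ is, so I will argue by contradiction and assume $\beta_c$, hence $z$, is algebraic. It is convenient to abbreviate $F(w):=\sum_{i=1}^{\infty}\tau_i w^i$, where $(\tau_i)$ is the classical Thue--Morse sequence (so $\tau_0=0$, $\tau_1=1$); note that for any $w$ in the open unit disc, $F(w)$ is precisely the number $Z$ of Theorem \ref{th:mahler} evaluated at $w$.

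Starting from $\sum_{i\ge 1}\la_i z^i=1$, I would split the sum according to the residue of $i$ modulo $3$ and use \eqref{eq:la-ga-1}: the residue-$2$ terms all vanish, the terms with $i=3k+1$ sum to $\sum_{k\ge 0}\tau_{2k+1}z^{3k+1}$, and the terms with $i=3k+3$ sum to $\sum_{k\ge 0}\tau_{2k+2}z^{3k+3}=\sum_{m\ge 1}\tau_{2m}z^{3m}$. I would then collapse the even- and odd-indexed Thue--Morse subseries using the standard recursions $\tau_{2j}=\tau_j$ and $\tau_{2j+1}=1-\tau_j$ ($j\ge 1$) together with $\tau_1=1$. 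The even part becomes $\sum_{m\ge1}\tau_m z^{3m}=F(z^3)$, while the odd part becomes
\[
\sum_{k\ge 0}\tau_{2k+1}z^{3k+1}=z+\sum_{k\ge 1}(1-\tau_k)z^{3k+1}=z+\frac{z^4}{1-z^3}-z\,F(z^3).
\]
Adding these and using $\sum_{i\ge1}\la_i z^i=1$ gives $1=z+\frac{z^4}{1-z^3}+(1-z)F(z^3)$, and solving for $F(z^3)$ — legitimate since $z\ne 1$ and $z^3\ne 1$ because $\beta_c>1$ — yields the key identity
\[
F(z^3)=\sum_{i=1}^{\infty}\tau_i z^{3i}=\frac{1-z-z^3}{(1-z)(1-z^3)}.
\]

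To conclude: the right-hand side is a rational function of $z$ with rational coefficients, evaluated at the algebraic number $z$ away from its poles, hence it is an algebraic number. But $z^3=1/\beta_c^3$ is algebraic and lies in the open unit disc (as $\beta_c>1$), so Theorem \ref{th:mahler}, applied with $z$ there replaced by $z^3$, forces the left-hand side $F(z^3)=\sum_{i\ge1}\tau_i(z^3)^i$ to be transcendental. This contradiction proves that $\beta_c$ is transcendental. I do not foresee a genuine obstacle here: the only content is the bookkeeping that transforms \eqref{eq:betaG-betaC} into the rational identity for $F(z^3)$, and the only points deserving care are keeping the index ranges straight in the mod-$3$ split and in the two Thue--Morse recursions, and checking that the denominators $1-z$ and $1-z^3$ do not vanish so that the final expression is a bona fide algebraic number; once the identity is in hand, Mahler's theorem closes the argument immediately.
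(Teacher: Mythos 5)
Your proof is correct and follows essentially the same route as the paper's: both split $\sum_i \la_i\beta^{-i}=1$ according to $i \bmod 3$ via \eqref{eq:la-ga-1}, collapse the even and odd Thue--Morse subseries with $\tau_{2j}=\tau_j$ and $\tau_{2j+1}=1-\tau_j$, solve for $\sum_{n\ge1}\tau_n\beta^{-3n}$ as an explicit rational function (your identity in $z=1/\beta_c$ is exactly the paper's $\frac{\beta(\beta^3-\beta^2-1)}{(\beta-1)(\beta^3-1)}$ after substitution), and then invoke Mahler's theorem at the algebraic point $z^3$ to reach a contradiction. The only difference is the cosmetic change of variable $z=1/\beta_c$.
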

\begin{proof}
Note by (\ref{eq:la-ga-1})  that the quasi-greedy expansion $\de(\beta_c)=(\la_i)$ satisfies 
\[\la_{3n+1}=\tau_{2n+1},\quad\la_{3n+2}=0\quad\textrm{and}\quad \la_{3n+3}=\tau_{2n+2}\]for all $n\ge 0$,
where $(\tau_i)_{i=0}^\f$ is the classical Thue-Morse seuqnece. Using that $\tau_0=0$ and $\tau_{2n+1}=1-\tau_n, \tau_{2n+2}=\tau_{n+1}$ for any $n\ge 0$ it follows that  
\begin{align*}
1=\sum_{i=1}^\f\frac{\la_i}{\beta^i}=\sum_{n=0}^\f\frac{\tau_{2n+1}}{\beta^{3n+1}}+\sum_{n=0}^\f\frac{\tau_{2n+2}}{\beta^{3n+3}}&=\sum_{n=0}^\f\frac{1-\tau_n}{\beta^{3n+1}}+\sum_{n=0}^\f\frac{\tau_{n+1}}{\beta^{3n+3}}\\
&=\frac{\beta^2}{\beta^3-1}-\frac{1}{\beta}\sum_{n=1}^\f\frac{\tau_n }{\beta^{3n}}+\sum_{n=1}^\f\frac{\tau_n}{\beta^{3n}}.
\end{align*} 
By rearrangement we have
\[
\sum_{n=1}^\f\frac{\tau_n}{\beta^{3n}}=\frac{\beta(\beta^3-\beta^2-1)}{(\beta-1)(\beta^3-1)}.
\]
If $\beta\in(1,2)$ is algebraic, then  by Theorem \ref{th:mahler} the lefthand side of the above equation is transcendental, while the righthand side is algebraic, leading to a contradiction. So $\beta_c$ is transcendental. 
\end{proof}

Recall from Definition \ref{def:beta-n} that the bases $\beta_n$ strictly increases to $\beta_c$ as $n\to\f$. In the following we show that for each $n\ge 0$ the periodic sequences $(\su_n)^\f, (\sv_n)^\f$ and $(\sw_n)^\f$ are all contained in $\US_{\beta_c}$. 
\begin{lemma}\label{prop:betan}
Let $n\ge 1$ and let $\beta>\beta_n$. Then the following periodic sequences
\[
(\su_k)^\f, ~(\sv_k)^\f,~(\sw_k)^\f,\quad (\Phi_A(\su_k))^\f,~(\Phi_B(\sv_k))^\f,~(\Phi_C(\sw_k))^\f \quad \textrm{with }0\le k\le n,
\] all belong to $\US_{\beta}$.
\end{lemma}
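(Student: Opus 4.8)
The plan is to verify, for each of the six listed periodic sequences, the lexicographic inequalities of Proposition~\ref{prop:21}, after first pushing the base $\beta$ down to the threshold $\beta_n$. Using the monotonicity of $\beta\mapsto\US_\beta$ (Remark~\ref{re:22}) and the strict monotonicity of $\de$ (Lemma~\ref{lem:delta-beta}): for every $\beta>\beta_n$ one has $\de(\beta)\succ\de(\beta_n)=(\t_n)^\f$, hence $\overline{\de(\beta)}\prec\overline{(\t_n)^\f}$, so by Proposition~\ref{prop:21} it suffices to show, for each listed $(d_i)$ and all $m\ge 0$,
\[
d_{m+1}^1d_{m+2}^1\cdots\lle(\t_n)^\f \text{ if } d_m^1=0,\qquad d_{m+1}^2d_{m+2}^2\cdots\lle(\t_n)^\f \text{ if } d_m^2=0,\qquad d_{m+1}^\+d_{m+2}^\+\cdots\lge\overline{(\t_n)^\f} \text{ if } d_m^\+=1.
\]
The cases $k=0$ are immediate, as $A^\f,B^\f,C^\f$ (and their $\Phi$-images, which are themselves) have constant coordinate sequences; so fix $1\le k\le n$.

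Next I would reduce the whole problem to four ``atomic'' $\{0,1\}$-sequences. By Lemmas~\ref{lem:51} and~\ref{lem:52} and Corollary~\ref{cor:54}, each coordinate sequence and each $\+$-sequence of $(\su_k)^\f$, $(\sv_k)^\f$, $(\sw_k)^\f$, $(\Phi_A(\su_k))^\f$, $(\Phi_B(\sv_k))^\f$, $(\Phi_C(\sw_k))^\f$ equals one of $(\t_k)^\f$, $(\re(\t_k))^\f$, $((010)^{2^{k-1}})^\f=(010)^\f$, $((101)^{2^{k-1}})^\f=(101)^\f$, or the reflection of one of these; and since reflection reverses the lexicographic order, the requirement for a reflected atom reduces to that for the atom itself, with the $0$-rule and the $\+$-rule interchanged. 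Three atoms are then handled directly: every shift of $(\t_k)^\f=\de(\beta_k)$ is $\lle(\t_k)^\f\lle(\t_n)^\f$ by Lemma~\ref{lem:delta-beta}(i) and $\beta_k\le\beta_n$; every shift of $(010)^\f$ following a $0$ lies in $\{(010)^\f,(100)^\f\}$, and $(100)^\f=\de(\beta_1)\lle(\t_n)^\f$; every shift of $(101)^\f$ following a $1$ lies in $\{(101)^\f,(011)^\f\}$, and $(011)^\f\lge\overline{(\t_n)^\f}$ because $\overline{\t_n}$ begins with $010$ for $n\ge 2$ whereas $\overline{\t_1}=011$.

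The remaining atom $(\re(\t_k))^\f$ is the heart of the proof: one must show that whenever it carries a $0$ in some position, the tail just after is $\lle(\t_k)^\f$, hence $\lle(\t_n)^\f$. Put $p=3\cdot2^{k-1}$; by Lemma~\ref{lem:51} the period $\re(\t_k)=\su_k^2$ coincides with the initial block $\re(\t_k^+)=\ga_1\cdots\ga_p$ of $(\ga_i)$ except in its last symbol, which is $1$ in $\re(\t_k)$ and $0=\ga_p$ in $\re(\t_k^+)$. For a shift by $j$ with $1\le j\le p-1$ and $\ga_j=0$, the tail starts with $\ga_{j+1}\cdots\ga_{p-1}1$ and then repeats $(\re(\t_k))^\f$; Lemma~\ref{l50} supplies $\ga_{j+1}\cdots\ga_p\prec\la_1\cdots\la_{p-j}$, and $\la_1\cdots\la_{p-j}$ is an initial block of $(\t_k)^\f$ since $\t_k^+=\la_1\cdots\la_p$ and $p-j\le p-1$. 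If this strict inequality is witnessed before the last symbol the tail is already $\prec(\t_k)^\f$; the borderline case is $\ga_{j+1}\cdots\ga_{p-1}=\la_1\cdots\la_{p-j-1}$ with $\la_{p-j}=1$, making the tail equal to $\la_1\cdots\la_{p-j}(\re(\t_k))^\f$, and one then compares $(\re(\t_k))^\f$ with $\sigma^{p-j}((\t_k)^\f)$ using a companion inequality of Lemma~\ref{l50} (relating $\ga_1\cdots\ga_j$ to $\la_{p-j+1}\cdots\la_p$), the identities $\la_{3i+2}=0$, and the Parry property of $\de(\beta_k)=(\t_k)^\f$; this can be organised as an induction on $k$ using $\re(\t_{k+1})=\re(\t_k^+)\t_k^+$ and $\t_{k+1}=\t_k^+\re(\t_k^+)$. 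This borderline/equality analysis is where I expect the real difficulty to lie; once it is settled, the atomic estimates of the previous step place all six families in $\US_\beta$, and monotonicity of $\beta\mapsto\US_\beta$ gives the statement for every $\beta>\beta_n$.
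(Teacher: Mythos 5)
Your proposal is correct and follows essentially the same route as the paper: reduce to the lexicographic conditions of Proposition \ref{prop:21}, read the coordinate and $\+$-sequences of the six periodic words off Lemmas \ref{lem:51} and \ref{lem:52} and Corollary \ref{cor:54}, and bound every relevant shift by $(\t_n)^\f\prec\de(\beta)$ using Lemma \ref{l50}. The one step you leave open, the borderline case for shifts of $(\re(\t_k))^\f$, is precisely the step the paper compresses into a bare citation of Lemma \ref{l50}, and it does close: with $p=3\cdot 2^{k-1}$, the comparison of $\si^j((\re(\t_k))^\f)$ against $(\t_k)^\f$ proceeds in blocks that alternate between inequality \eqref{eq:la-ga-ineq-1} (for $\ga_{j+1}\ldots\ga_{p-1}1$ versus $\la_1\ldots\la_{p-j}$) and inequality \eqref{eq:la-ga-ineq} (for $\ga_1\ldots\ga_j$ versus $\la_{p-j+1}\ldots\la_{p-1}0$), each of which yields $\lle$ on blocks after adjusting the final symbol; hence either some block comparison is strict, or all blocks agree and the two sequences coincide (as indeed happens for $k\ge 2$ at $j=3\cdot2^{k-2}$, since $\re(\t_k)=\re(\t_{k-1}^+)\t_{k-1}^+$). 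In either case the non-strict bound $\lle(\t_k)^\f\lle(\t_n)^\f$, which is all your reduction requires, holds, so the proposal is sound as written.
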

 \begin{proof}
Note by Theorem \ref{th:sidorov} and Proposition \ref{prop:41} that the lemma holds for $n=0$ and $n=1$. Now we consider $n\ge 2$ and $\beta>\beta_n$. Since the set-valued map $\beta\mapsto\US_{\beta}$ is increasing, and $\su_k=\su_{k-1}^B\Phi_A(\su_{k-1}^B)$, $\sv_k=\sv_{k-1}^C\Phi_B(\sv_{k-1}^C)$ and $\sw_k=\sw_{k-1}^A\Phi_C(\sw_{k-1}^C)$ for any $k\ge 2$, it suffices to prove that   
\[
(\su_k)^\f, ~(\sv_k)^\f,~(\sw_k)^\f~\in\US_{\beta}\quad\textrm{for all }2\le k\le n.
\]

Take $k\in\set{2,3,\ldots, n}$. Since $\beta>\beta_n$, by Definition \ref{def:beta-n}  and Lemma \ref{lem:delta-beta} it follows that 
\begin{equation}\label{eq:delta-beta-n}
\de(\beta)\succ \de(\beta_n)=(\t_n)^\f\lge (\t_k)^\f\lge(\t_2)^\f=(101000)^\f.
\end{equation}
Observe by Lemma \ref{lem:51} and Corollary \ref{cor:54} that 
\[\su_k^1=\t_k,\quad \su_k^2=\re(\t_k),\quad \su_k^\+=(101)^{2^{k-1}}.\]
 Then by Lemma \ref{l50} and (\ref{eq:delta-beta-n}) it follows that  
\begin{align*} 
& \si^i((\su_k^1)^\f)=\si^i((\t_k)^\f)\lle(\t_k)^\f\prec \de(\beta),\\
&\si^i((\su_k^2)^\f)=\si^i((\re(\t_k))^\f)\lle(\t_k)^\f\prec \de(\beta),\\
&\si^i(\overline{(\su_k^\+)^\f})=\si^i((010)^\f)\prec\de(\beta).
\end{align*}
Hence, by Proposition \ref{prop:21} we conclude that $(\su_k)^\f\in\US_{\beta}$.

Similarly, by Lemma \ref{lem:52} and Corollary \ref{cor:54} we have 
\begin{align*}
&\sv^1_k=(010)^{2^{k-1}},\quad \sv_k^2=\t_k,\quad \sv_k^\+=\overline{\re(\t_k)};\\
&\sw^1_k=\re(\t_k),\quad \sw_k^2=(010)^{2^{k-1}},\quad \sw_k^2=\overline{\t_k}.
\end{align*} 
Then by  (\ref{eq:delta-beta-n}), Lemma \ref{l50} and  Proposition \ref{prop:21} we can deduce that  $(\sv_k)^\f, (\sw_k)^\f\in\US_{\beta}$.
 \end{proof}
 
 In the following lemma we show that the sequences $(\su_n)^\f, (\sv_n)^\f$ and $(\sw_n)^\f$ are forbidden in $\US_{\beta_{n}}$. So the range for the parameter $\beta$ in the previous lemma is critical. 
 \begin{lemma}
 \label{l611}
 Let $(d_i)\in\US_{\beta_c}$. If  there exits $m\in\N$ such that    $d_m\ne d_{m+1}$  and  
 \[
 d_{m+1}\ldots d_{m+2^{n-2}3}\in\set{\su_{n-1}^B,  \Phi_A(\su_{n-1}^B),  \sv_{n-1}^C,  \Phi_B(\sv_{n-1}^C), \sw_{n-1}^A,  \Phi_C(\sw_{n-1}^A)},
 \]
 then $(d_i)\notin\US_{\beta_n}$.
 \end{lemma}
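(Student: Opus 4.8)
The plan is to reduce to a single representative of the six transition words and then run a forced iteration. By the symmetry between the three types, mediated by the involutions $\Phi_A,\Phi_B,\Phi_C$, it suffices to treat the case $d_{m+1}\ldots d_{m+3\cdot2^{n-2}}=\su_{n-1}^B$ with $d_m\ne d_{m+1}$; the other five cases follow verbatim after replacing $(\su_k,\Phi_A)$ by $(\sv_k,\Phi_B)$ or $(\sw_k,\Phi_C)$ and invoking Proposition \ref{prop:betac-b} in place of Proposition \ref{prop:betac-a}. (Necessarily $n\ge2$, since otherwise $3\cdot2^{n-2}$ is not an integer.) As $\su_{n-1}^B$ begins with $B$, the condition $d_m\ne d_{m+1}$ forces $d_m\in\{A,C\}$, hence $d_m^1=0$; this vanishing coordinate will supply the eventual contradiction.

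The core step is an iteration of Proposition \ref{prop:betac-a} with the fixed value $k=n-1$, started at position $m$: one alternately applies part (i) after each occurrence of $\su_{n-1}^B$ and part (ii) after each occurrence of $\Phi_A(\su_{n-1}^B)$. This is legitimate because every such block is preceded by a digit change — a $\su_{n-1}^B$-block is preceded either by $d_m$ or by the last digit $C$ of a preceding $\Phi_A(\su_{n-1}^B)$-block, and a $\Phi_A(\su_{n-1}^B)$-block is preceded by the last digit $B$ of a preceding $\su_{n-1}^B$-block. At each step the conclusion leaves exactly two options: a \emph{continuation} ($\Phi_A(\su_{n-1}^B)$ after $\su_{n-1}^B$, or $\su_{n-1}^B$ after $\Phi_A(\su_{n-1}^B)$), which preserves a digit change at the boundary of the next block and lets the iteration proceed, or a \emph{jump} ($\Phi_A(\su_{n-1})$ after $\su_{n-1}^B$, or $\su_{n-1}$ after $\Phi_A(\su_{n-1}^B)$).

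If a jump of the first kind occurs, some $\su_{n-1}^B$ is immediately followed by $\Phi_A(\su_{n-1})$, and $\su_{n-1}^B\Phi_A(\su_{n-1})=\su_n^B$ by Definition \ref{def:type-a}; this block is preceded by a digit of first coordinate $0$, and by Lemma \ref{lem:51} its first‑coordinate word is $\t_n^+$. Since $\t_n^+0^\f\succ(\t_n)^\f=\de(\beta_n)$, the first inequality of Proposition \ref{prop:21} fails for $\beta=\beta_n$, so $(d_i)\notin\US_{\beta_n}$. If a jump of the second kind occurs, some $\Phi_A(\su_{n-1}^B)$ is immediately followed by $\su_{n-1}$, so the block $\Phi_A(\su_{n-1}^B)\su_{n-1}$ is preceded by the digit $B$, of second coordinate $0$, and by Lemma \ref{lem:51} its second‑coordinate word equals $\t_{n-1}^+\re(\t_{n-1})$; a one‑digit comparison with the block map gives $\re(\t_{n-1})\succ\re(\t_{n-1}^+)$, whence $\t_{n-1}^+\re(\t_{n-1})\succ\t_{n-1}^+\re(\t_{n-1}^+)=\t_n$, so the second inequality of Proposition \ref{prop:21} fails for $\beta_n$ and again $(d_i)\notin\US_{\beta_n}$. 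Finally, if no jump ever occurs, the iteration forces $d_{m+1}d_{m+2}\ldots=(\su_{n-1}^B\Phi_A(\su_{n-1}^B))^\f=(\su_n)^\f$, hence $d_{m+1}^1d_{m+2}^1\ldots=(\t_n)^\f=\de(\beta_n)$; as $d_m^1=0$ and Proposition \ref{prop:21} demands a \emph{strict} inequality here, $(d_i)\notin\US_{\beta_n}$.

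The routine ingredients are the word identities $\su_n=\su_{n-1}^B\Phi_A(\su_{n-1}^B)$ and $\su_n^B=\su_{n-1}^B\Phi_A(\su_{n-1})$, the coordinate‑word values $(\su_n^B)^1=\t_n^+$, $(\Phi_A(\su_{n-1}^B))^2=\t_{n-1}^+$ and $(\su_{n-1})^2=\re(\t_{n-1})$ drawn from Lemma \ref{lem:51}, and the comparison $\re(\t_{n-1})\succ\re(\t_{n-1}^+)$ (they agree except in the last digit). The main obstacle is the bookkeeping of the iteration: checking that each continuation genuinely restores the digit change needed to reapply Proposition \ref{prop:betac-a}, verifying that the two jump types together with the no‑jump limit exhaust all possibilities, and — equally delicate — keeping straight at each jump \emph{which} coordinate ($1$st or $2$nd) overflows and \emph{which} preceding digit ($d_m$ or $C$, respectively $B$) carries the vanishing coordinate that triggers the violated inequality of Proposition \ref{prop:21}.
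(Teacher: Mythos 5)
Your proposal is correct and follows essentially the same route as the paper: reduce by symmetry to the $\su_{n-1}^B$ case, iterate Proposition \ref{prop:betac-a} so that either a ``jump'' produces a block with coordinate word $\t_n^+$ preceded by a digit with vanishing coordinate (violating Proposition \ref{prop:21} for $\beta_n$), or the sequence is forced to end in $(\su_n)^\f$, whose first coordinate equals $\de(\beta_n)$ and violates the strict inequality. The word identities and coordinate computations you invoke all check out against Definition \ref{def:type-a} and Lemma \ref{lem:51}.
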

 \begin{proof}
 Since the proofs for different cases are similar, without loss of generality we assume on the contrary that  $d_{m+1}\ldots d_{m+2^{n-2}3}=\su_{n-1}^B$ and $d_m\ne d_{m+1}=B$. By Proposition \ref{prop:betac-a} it follows that the next block of length $2^{n-2}3$ is nearly determined:
 \[
 d_{m+1}\ldots d_{m+2^{n-1}3}\in\set{\su_n, \su_{n}^B}.
 \]
 If $ d_{m+1}\ldots d_{m+2^{n-1}3}=\su_{n}^B$, then by Lemma \ref{lem:51} we have $(\su_n^B)^1=\t_n^+$. This implies \[d_{m+1}^1 d_{m+2}^1\ldots=\t_n^+\ldots\succ(\t_n)^\f=\de(\beta_n).\] Since $d_m^1=0$, by Proposition \ref{prop:21} this implies that $(d_i)\notin\US_{\beta_n}$.

So $d_{m+1}\ldots d_{m+2^{n-1}3}=\su_{n}=\su_{n-1}^B\Phi_A({\su_{n-1}^B})$. Then  by Proposition \ref{prop:betac-a} the next block of length $2^{n-2}3$ is nearly determined, and thus
 \[
 d_{m+2^{n-2}3+1}\ldots d_{m+(2^{n-1}+2^{n-2})3}\in\set{\Phi_A(\su_{n-1}^B)\su_{n-1}^B, \Phi_A(\su_{n-1}^B)\su_{n-1}}.
 \]
 If $d_{m+2^{n-2}3+1}\ldots d_{m+(2^{n-1}+2^{n-2})3}=\Phi_A(\su_{n-1}^B)\su_{n-1}=\Phi_A(\su_n^B)$, then by   Lemma \ref{lem:51} we have $(\Phi_A(\su_n^B))^2=\t_n^+$. This implies $(d_i)\notin\US_{\beta_n}$ by Proposition \ref{prop:21}. Therefore, 
 \[
 d_{m+1}\ldots d_{m+(2^{n-1}+2^{n-2})3}=\su_{n-1}^B\Phi_A(\su_{n-1}^B)\su_{n-1}^B.
 \]
 
Repeating the above arguments it follows that $d_{m+1}d_{m+2}\ldots=(\su_{n-1}^B\Phi_A(\su_{n-1}^B))^\f=(\su_n)^\f$. Then    by Lemma \ref{lem:51} we have  
 \[
 d_m^1=0\quad\textrm{and}\quad d_{m+1}^1d_{m+2}^1\ldots=(\t_n)^\f=\de(\beta_n),
 \] 
 which again gives $(d_i)\notin\US_{\beta_n}$ by Proposition \ref{prop:21}.  This completes the proof.
 \end{proof}
 
The next  result  is a generalization of Proposition \ref{prop:41}. Since $\beta_n\nearrow \beta_c$ as $n\ra\f$, as a consequence of the following proposition  we establish Theorem \ref{thm:3} (i).
 \begin{proposition}\label{prop:54}
 Let $n\ge 0$. Then for any $\beta\in(\beta_n, \beta_{n+1}]$ we have 
 $\US_\beta=\US_{\beta_{n+1}}.$
  Furthermore,  any sequence in $\US_{\beta_{n+1}}$ must  end in
 \[\bigcup_{k=0}^n\set{(\su_k)^\f, (\sv_k)^\f, (\sw_k)^\f}.\]
 \end{proposition}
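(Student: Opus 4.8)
The plan is to prove the two assertions simultaneously by induction on $n$, using the structural statement (the ``Furthermore'') as the driver for the equality $\US_\beta=\US_{\beta_{n+1}}$.

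One inclusion, $\US_\beta\subseteq\US_{\beta_{n+1}}$, holds for every $\beta\le\beta_{n+1}$ by monotonicity of $\beta\mapsto\US_\beta$ (Remark~\ref{re:22}). For $n=0$ the statement is the alternate proof of Theorem~\ref{th:sidorov} (namely $\US_{\beta_1}=\set{A^\f,B^\f,C^\f}$), and for $n=1$ it is exactly what the proof of Proposition~\ref{prop:41}, equivalently Theorem~\ref{th:S=D}, gives: $\US_\beta=\US_{\beta_2}$ for $\beta\in(\beta_1,\beta_2]$, and every sequence of $\US_{\beta_2}$ is eventually $A^\f,B^\f,C^\f$ or purely periodic with period $\su_1,\sv_1,\sw_1,\Phi_A(\su_1),\Phi_B(\sv_1)$ or $\Phi_C(\sw_1)$. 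So I fix $n\ge 2$, assume the statement for $n-1$, and treat $n$. Since $\beta_{n+1}<\beta_c$ we have $\US_{\beta_{n+1}}\subseteq\US_{\beta_c}$, so Propositions~\ref{prop:betac-a}, \ref{prop:betac-b} and Lemma~\ref{l611} apply to sequences of $\US_{\beta_{n+1}}$.

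For the structural claim, take $(d_i)\in\US_{\beta_{n+1}}$; if it is eventually $A^\f,B^\f$ or $C^\f$ we are done. Otherwise there is $m$ with $d_m\ne d_{m+1}$, and since $\de(\beta_{n+1})$ begins with $10$, Lemma~\ref{l42} forbids every block $cdd$, so $d_{m+1}d_{m+2}d_{m+3}$ has the form $xyx$ --- an ``open'' level-$1$ block $\su_1^B,\sv_1^C,\sw_1^A$ or a $\Phi$-image --- or the form $xyz$ --- a ``closed'' level-$1$ block $\su_1,\sv_1,\sw_1$ or a $\Phi$-image. In the first case the bootstrap below starts immediately; in the second case, either $(d_i)$ is eventually one of the six period-$3$ sequences with these periods (and we are done), or a first deviation from that pattern produces, after a bounded number of steps forced by Lemma~\ref{l42}, an ``open'' level-$1$ block after a change. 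So we may assume an ``open'' level-$j$ block ($j\ge 1$) occurs after a change, and I bootstrap on $j$: Propositions~\ref{prop:betac-a} and~\ref{prop:betac-b} force the next block of length $3\cdot2^{j-1}$ to be the corresponding $\Phi$-image, either ``open'' --- which glues to the current block into an ``open'' level-$(j{+}1)$ block occurring after a change, so $j$ increases --- or ``closed'', in which case iterating the same forcing (as in the proofs of Proposition~\ref{prop:betac-a}(i) and Lemma~\ref{l611}) makes $(d_i)$ thereafter purely periodic with period $\su_j$ (resp. $\sv_j,\sw_j$, or a $\Phi$-image). The level cannot exceed $n$: an ``open'' level-$n$ block after a change would, by Lemma~\ref{l611} applied with $n+1$ in place of $n$, give $(d_i)\notin\US_{\beta_{n+1}}$. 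Hence $j$ stabilizes at some $k$ with $1\le k\le n$, the ``closed'' alternative is eventually taken, and $(d_i)$ is eventually $(\su_k)^\f,(\sv_k)^\f,(\sw_k)^\f$, or one of $(\Phi_A(\su_k))^\f,(\Phi_B(\sv_k))^\f,(\Phi_C(\sw_k))^\f$; this is the ``Furthermore'' part.

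Finally, to obtain $\US_{\beta_{n+1}}\subseteq\US_\beta$ for $\beta\in(\beta_n,\beta_{n+1}]$: fix such $(d_i)$ and $\beta$. Since $\beta>\beta_n$, we have $\de(\beta)\succ\de(\beta_n)=(\t_n)^\f$, so by Proposition~\ref{prop:21} it is enough to establish the sharpened inequalities $d_{m+1}^1d_{m+2}^1\cdots\lle(\t_n)^\f$ and $d_{m+1}^2d_{m+2}^2\cdots\lle(\t_n)^\f$ (when $d_m^1=0$, resp. $d_m^2=0$) and $d_{m+1}^\+d_{m+2}^\+\cdots\lge\overline{(\t_n)^\f}$ (when $d_m^\+=1$). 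On the eventually periodic tail of $(d_i)$ these come from Lemmas~\ref{lem:51},~\ref{lem:52}, Corollary~\ref{cor:54} and Lemma~\ref{l50} (shifts of $(\t_k)^\f$ and of $(\re(\t_k))^\f$ are $\lle(\t_k)^\f\lle(\t_n)^\f$; shifts of $(010)^\f$ are $\prec(\t_n)^\f$; the $\+$-coordinates are reflections of these), exactly as in the proof of Lemma~\ref{prop:betan}; for the finitely many $m$ before the periodic part one checks them directly (the pre-change constant stretches give tails beginning with $0$, hence $\prec(\t_n)^\f$, and inside the forming Thue--Morse prefix a failure would force a coordinate tail to begin with $\t_n^+$, which by Propositions~\ref{prop:betac-a}, \ref{prop:betac-b} and Lemma~\ref{l611} is incompatible with $(d_i)\in\US_{\beta_{n+1}}$). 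Together with the trivial inclusion this yields $\US_\beta=\US_{\beta_{n+1}}$. The heart of the argument --- and the step I expect to be most troublesome --- is the structural claim: organizing the ``level'' of the nested Thue--Morse pattern carried by $(d_i)$, keeping the three symmetry types $A,B,C$ and the $\Phi$-images in sync, and showing that the forcing Propositions~\ref{prop:betac-a}--\ref{prop:betac-b} combine with the criticality Lemma~\ref{l611} to pin $(d_i)$ down to a periodic tail of bounded period; the transition where the pattern ``closes'' (``open'' $\su_k^B$ versus ``closed'' $\su_k$) is the delicate bookkeeping, the rest being lexicographic comparison with the Parry sequences $(\t_k)^\f$ via Lemma~\ref{l50}.
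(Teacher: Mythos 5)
Your proposal follows essentially the same route as the paper's proof: induction on $n$ with base cases supplied by Theorem~\ref{th:sidorov} and Proposition~\ref{prop:41}, the level-by-level forcing of Propositions~\ref{prop:betac-a} and~\ref{prop:betac-b} (your ``bootstrap on $j$'' is the paper's recursive relation) capped off by Lemma~\ref{l611}, and Lemma~\ref{prop:betan} to confirm membership of the periodic tails. The only divergences are presentational: you make explicit the reverse inclusion $\US_{\beta_{n+1}}\subseteq\US_\beta$ via sharpened lexicographic comparisons with $(\t_n)^\f$ where the paper merely notes its forcing argument is uniform in $\beta\in(\beta_n,\beta_{n+1}]$, and your ``period $\su_j$'' should be $\su_{j+1}$ once the open level stabilizes at $j$ --- a harmless off-by-one.
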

   \begin{proof}
 We will prove the proposition by induction on $n$. By Theorem \ref{th:sidorov} and Proposition \ref{prop:41} it follows that the theorem holds for $n=0$ and $n=1$. Now take $n\ge 2$ and $\beta\in(\beta_n, \beta_{n+1}]$. By Lemma \ref{prop:betan} it follows that the periodic sequences $(\su_k)^\f, (\sv_k)^\f$ and $(\sw_k)^\f$ with $0\le k\le n$ all belong to $\US_{\beta_{n+1}}$.
 
Let $(d_i)\in\US_{\beta}\setminus\US_{\beta_2}$. Observe by Lemma \ref{l42} that any block of the form `$cdd$' is forbidden in sequences of $(d_i)$. By Theorem \ref{th:sidorov} and Proposition \ref{prop:41} it follows that $(d_i)$ must contain one of the following blocks:
\begin{align*}
&\su_1^B=BAB,\quad\Phi_A(\su_1^B)=CAC;\\
& \sv_1^C=CBC,\quad\Phi_B(\sv_1^C)=ABA;\\
& \sw_1^A=ACA,\quad \Phi_C(\sw_1^A)=BCB.
\end{align*}
Since the proofs for different cases are similar,   we may assume   $d_{m+1}d_{m+2}d_{m+3}=\su_1^B$ and $d_m\ne d_{m+1}$ for some smallest integer $m\ge 1$. By Proposition \ref{prop:betac-a} it follows that the next block of length $3$ is nearly determined, i.e., $d_{m+4}d_{m+5}d_{m+6}\in\set{\Phi_A(\su_1^B), \Phi_A(\su_1)}$. Then    
\[
d_{m+1}\ldots d_{m+6}=\su_1^B\Phi_A(\su_1^B)\quad\textrm{or}\quad d_{m+1}\ldots d_{m+6}=\su_1^B\Phi_A(\su_1)=\su_2^B.
\]
Again, by Proposition \ref{prop:betac-a} we obtain the following recursive relation: for any $\ell\ge 1$
\begin{itemize}
\item if $d_{s+1}\ldots d_{s+3\cdot2^{\ell-1}}=\su_\ell^B$ and $d_s\ne d_{s+1}$, then either  $d_{s+3\cdot2^{\ell-1}+1}\ldots d_{s+3\cdot2^\ell}=\Phi_A(\su_\ell^B)$ or $d_{s+1}\ldots d_{s+3\cdot2^\ell}=\su_{\ell+1}^B$;

\item if $d_{s+1}\ldots d_{s+3\cdot2^{\ell-1}}=\Phi_A(\su_\ell^B)$ and $d_s\ne d_{s+1}$, then either  $d_{s+3\cdot2^{\ell-1}+1}\ldots d_{s+3\cdot2^\ell}=\su_\ell^B$ or $d_{s+1}\ldots d_{s+3\cdot2^\ell}=\Phi_A(\su_{\ell+1}^B)$.
\end{itemize}
Applying this recursive relation and using Lemma \ref{l611} it follows that the  sequence $d_{m+1}d_{m+2}\ldots$ is nearly determined, and it eventually    ends with $(\su_{k-1}^B\Phi_A(\su_{k-1}^B))^\f=(\su_k)^\f$ for some $2\le k\le n$. 
 
Observe that our proof does not depend on the choice of $\beta\in(\beta_n, \beta_{n+1}]$. So the set-valued map $\beta\mapsto\US_\beta$ is constant  in $(\beta_n, \beta_{n+1}]$, i.e., $\US_{\beta}=\US_{\beta_{n+1}}$ for any $\beta\in(\beta_n, \beta_{n+1}]$. This completes the proof.
 \end{proof}

Motivated by  the recursive relation  in  the proof of Proposition  \ref{prop:54} and the analogues phenomenon occurs in one dimensional $\beta$-expansions (cf.~\cite{Glendinning_Sidorov_2001, Kong_Li_Dekking_2010}) we prove Theorem \ref{thm:3} (ii).
\begin{proposition}\label{prop:betac-dim}
$\U_{\beta_c}$ is uncountable and $\dim_H\U_{\beta_c}=0$.
\end{proposition}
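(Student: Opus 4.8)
\emph{Hausdorff dimension zero.} To prove $\dim_H\U_{\beta_c}=0$ the plan is to bound the word complexity of $\US_{\beta_c}$ and then apply a standard covering estimate: writing $\mathcal P_N:=\{d_1\ldots d_N:(d_i)\in\US_{\beta_c}\}$, each word in $\mathcal P_N$ cuts out a subset of $\U_{\beta_c}$ of diameter $O(\beta_c^{-N})$, so
\[
\dim_H\U_{\beta_c}\ \le\ \liminf_{N\to\f}\frac{\log\#\mathcal P_N}{N\log\beta_c},
\]
and it suffices to show $\#\mathcal P_N=2^{o(N)}$. Fix $(d_i)\in\US_{\beta_c}$. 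By Theorem~\ref{th:sidorov} and Proposition~\ref{prop:41}, either $(d_i)\in\{A^\f,B^\f,C^\f\}$, or there is a least $m_0\ge 1$ with $d_{m_0}\ne d_{m_0+1}$ and $d_{m_0+1}d_{m_0+2}d_{m_0+3}$ one of $BAB,CAC,CBC,ABA,ACA,BCB$. In the latter case, minimality of $m_0$ forces $d_1\ldots d_{m_0}$ to be a prefix of one of the ``simple'' sequences classified in Theorem~\ref{th:sidorov}, Proposition~\ref{prop:41} and Theorem~\ref{th:S=D}, leaving $O(N)$ choices for it; and by Propositions~\ref{prop:betac-a}--\ref{prop:betac-b} together with the recursion in the proof of Proposition~\ref{prop:54}, the tail $d_{m_0+1}d_{m_0+2}\ldots$ is produced by iterating at successive ``decision blocks'' one of two moves --- \emph{stay} at the current level $\ell$ (appending a block of length $3\cdot2^{\ell-1}$) or \emph{move up} to level $\ell+1$ --- with the level non-decreasing along decision blocks. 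Since reaching level $\ell$ consumes at least $3(2^{\ell-1}-1)$ symbols, only $L=O(\log N)$ levels are relevant to $d_1\ldots d_N$, so --- once the starting block among the six Thue--Morse patterns is fixed --- the part of the tail inside $d_1\ldots d_N$ is determined by the $L$ ``stay-counts'' (each an element of $\{1,\ldots,N\}\cup\{\f\}$). Hence $\#\mathcal P_N\le N^{O(\log N)}=2^{O((\log N)^2)}=2^{o(N)}$, which gives $\dim_H\U_{\beta_c}=0$.

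\emph{Uncountability.} The plan is to inject $\{0,1\}^{\N}$ into $\US_{\beta_c}$ by realizing \emph{all} infinite paths through the binary branching tree implicit in Propositions~\ref{prop:betac-a}--\ref{prop:betac-b}. Starting from the level-$1$ block $\su_1^B=BAB$, one makes at each decision block an arbitrary choice: at a level-$k$ block of type $\su$ (resp.\ $\Phi_A(\su)$) a ``stay'' appends $\Phi_A(\su_k^B)$ (resp.\ $\su_k^B$), keeping level $k$ but switching type, while a ``move up'' appends $\Phi_A(\su_k)$ (resp.\ $\su_k$) and produces the level-$(k+1)$ block $\su_{k+1}^B$ (resp.\ $\Phi_A(\su_{k+1}^B)$), using $\su_{k+1}=\su_k^B\Phi_A(\su_k^B)$. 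Paths with infinitely many ``move up'' moves give pairwise distinct sequences, since two such paths first disagree at the level where their plateaus first differ in length. The essential point to be verified is that each resulting $(d_i)$ satisfies the three families of inequalities in Proposition~\ref{prop:21} with $\de(\beta_c)=(\la_i)$: by Lemmas~\ref{lem:51} and \ref{lem:52} the coordinate sequences $(d_i^1),(d_i^2)$ are concatenations of the words $\t_k,\t_k^+,\re(\t_k),\re(\t_k^+)$, and by Corollary~\ref{cor:54} $(d_i^\+)=(101)^\f$, so $\overline{(d_i^\+)}=(010)^\f$; every shift of each of these three sequences is $\lle(\t_n)^\f$ for a suitable $n$, hence $\prec(\la_i)$, by the Thue--Morse inequalities of Lemma~\ref{l50} together with $\de(\beta_c)=(\la_i)$ (see \eqref{eq:betac-delta}) and the aperiodicity of $(\la_i)$ (Proposition~\ref{prop:transcendental}). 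Thus $\omega\mapsto(d_i)$ is a well-defined injection $\{0,1\}^{\N}\hookrightarrow\US_{\beta_c}$, and $\U_{\beta_c}$ is uncountable.

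\emph{Main obstacle.} Propositions~\ref{prop:betac-a}--\ref{prop:betac-b} assert only the \emph{necessity} of the branching rules, so the crux is the last step of the uncountability argument: one must show that an \emph{arbitrary} infinite sequence of branch choices still yields an element of $\US_{\beta_c}$, which reduces to controlling the shifts of the three derived coordinate sequences against $\de(\beta_c)$ \emph{uniformly along every branch}. This is exactly what Lemma~\ref{l50} is for, but the bookkeeping --- the $+$-modifications at block boundaries, the alternation of the $\su$- and $\Phi_A(\su)$-types, and the transitions between levels --- is where the care lies; by contrast the complexity count needed for $\dim_H\U_{\beta_c}=0$ is routine once this recursive picture is available.
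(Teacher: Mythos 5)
Your proposal matches the paper's proof in both halves: uncountability is obtained by exhibiting an uncountable family of admissible concatenations of the Type-$A$ Thue--Morse blocks (the paper uses $\su_1^{j_1}\su_2^{j_2}\cdots$ with $j_k\in\N$, you use all branch paths from the same classification), and $\dim_H\U_{\beta_c}=0$ follows from the exponential growth of the block lengths $3\cdot2^{k-1}$ forcing subexponential word complexity. The sufficiency verification you single out as the main obstacle is handled just as tersely in the paper, which justifies membership by appeal to Lemma~\ref{prop:betan} and the recursion in the proof of Proposition~\ref{prop:54}, and defers the detailed dimension estimate to Allaart's work.
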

\begin{proof}
Note that $\beta_n\nearrow \beta_c$ as $n\ra\f$. 
By Lemma \ref{prop:betan} and the proof of Proposition \ref{prop:54} it follows that $\US_{\beta_c}$ contains all of the following sequences
\[
\su_1^{j_1}\su_2^{j_2}\cdots \su_{k}^{j_k}\cdots,
\]
where $j_k\in\N$. Observe that for each $k\ge 1$ the block $\su_k$ ends with digit $C$ and $\su_{k+1}=\su_k^B\Phi_A(\su_k^B)$. Then $\su_k$  can not be written as concatenation of two or more blocks of the form $\su_\ell$ with $\ell<k$. This implies that $\U_{\beta_c}$ is uncountable. 

Furthermore, by the recursive relation described in the proof of Proposition  \ref{prop:54} and using Propositions \ref{prop:betac-a} and \ref{prop:betac-b}  it follows that any sequence in $\US_{\beta_c}\setminus\US_{\beta_2}$ is of the form
\begin{align*}
&\a \;\Big(\su_{i_1}^B\Phi_A(\su_{i_1}^B)\Big)^{j_1}\left(\su_{i_1}^B\Phi_A(\su_{i_2'}^B)\right)^{j_1'}\cdots \Big(\su_{i_k}^B\Phi_A(\su_{i_k}^B)\Big)^{j_k}\left(\su_{i_k}^B\Phi_A(\su_{i_{k+1}'}^B)\right)^{j_k'}\cdots;\\
&\b \;\Big(\sv_{i_1}^C\Phi_B(\sv_{i_1}^C)\Big)^{j_1}\left(\sv_{i_1}^C\Phi_B(\sv_{i_2'}^C)\right)^{j_1'}\cdots \Big(\sv_{i_k}^C\Phi_B(\sv_{i_k}^C)\Big)^{j_k}\left(\sv_{i_k}^C\Phi_B(\sv_{i_{k+1}'}^C)\right)^{j_k'}\cdots;\\
&\c \;\Big(\sw_{i_1}^A\Phi_C(\sw_{i_1}^A)\Big)^{j_1}\left(\sw_{i_1}^A\Phi_C(\sw_{i_2'}^A)\right)^{j_1'}\cdots \Big(\sw_{i_k}^A\Phi_C(\sw_{i_k}^A)\Big)^{j_k}\left(\sw_{i_k}^A\Phi_C(\sw_{i_{k+1}'}^A)\right)^{j_k'}\cdots,
\end{align*}
where 
\[
\a, \b,\c\in\bigcup_{n=0}^\f\A^n; \quad j_k\in\N\cup\set{0, \f}, ~j_k'\in\set{0, 1}\]
{and}
\[1\le i_1<i_2'\le i_2<\cdots <i_k'\le i_k<i_{k+1}'\le \cdots.
\]
Observe that the length of $\su_k, \sv_k$ and $\sw_k$ is $2^{k-1}3$ which grows exponentially fast. This implies that $\dim_H\U_{\beta_c}=0$.
\end{proof}
We remark that for a    detailed proof of $\dim_H\U_{\beta_c}=0$ we refer to  \cite[Theorem 2.9]{Allaart-2017}  by an easy adaption.

\begin{proof}[Proof of Theorem \ref{thm:3}]
By Propositions \ref{prop:transcendental},  \ref{prop:54} and   \ref{prop:betac-dim} it suffices to prove that  for any $\beta>\beta_c$ we have $\dim_H\U_\beta>0$. We do this now
by first constructing a sequence of bases $(\hat\beta_n)$ such that  $\hat\beta_n$ strictly decreases to $\beta_c$ as $n\ra\f$, and then showing  that for any $\beta>\hat\beta_n$ the set $\US_\beta$ contains a subshift of finite type   with positive topological entropy which implies $\U_\beta>0$. 

For $n\ge 1$ let $\hat\beta_n\in(1,2)$ such that 
$
\de(\hat\beta_n)=\t_n^+(\re(\t_n))^\f.
$
By  Lemmas \ref{lem:delta-beta} and   \ref{l50} one can check that $\hat\beta_n$ is well-defined. Observe that $\de(\beta_c)$ begins with $\t_n^+\re(\t_n)\re(\t_n^+)$ for any $n\ge 1$. Then  
\[
\de(\hat\beta_1)\succ\de(\hat\beta_2)\succ\cdots\quad\textrm{and}\quad \de(\hat\beta_n)\searrow(\la_i)=\de(\beta_c)\quad\textrm{as }n\ra\f.
\]
So by Lemma \ref{lem:delta-beta} we have 
\begin{equation}\label{eq-hatbetan}
\hat\beta_1>\hat\beta_2>\cdots\quad\textrm{and}\quad \hat\beta_n\searrow \beta_c\quad \textrm{as }n\ra\f.
\end{equation}

Now take $\beta\in(\hat\beta_n, \hat\beta_{n-1}]$ with $n\ge 2$. Let $(X_n, \si)$ be the subshift of finite type represented by the labeled graph $\mathcal G$ as in Figure \ref{figure:15}. Similar to the proof of Lemma \ref{prop:betan} one can prove by using  Lemmas \ref{l50}, \ref{lem:51}, \ref{lem:52} and Corollary \ref{cor:54}  that any sequence in the subshift of finite type satisfies the conditions in Proposition \ref{prop:21}. In other words, $X_n\subseteq\US_\beta$.
\begin{figure}[h!]
  \centering
 \begin{tikzpicture}[->,>=stealth',shorten >=1pt,auto,node distance=4cm,
                    semithick]

  \tikzstyle{every state}=[minimum size=0pt,fill=black,draw=none,text=black]

  \node[state] (A)                    { };
  \node[state]         (B) [ right of=A] { };

  \path[->,every loop/.style={min distance=0mm, looseness=80}]
   (A) edge [loop left,->]  node {$\Phi_A(\su_n)$} (A)
            edge  [bend left]   node {$\Phi_A(\su_n^B)$} (B)

        (B) edge [loop right] node {$\su_n$} (B)
            edge  [bend left]            node {$\su_n^B$} (A);
\end{tikzpicture}
  \caption{The   labeled graph $\mathcal G$ presenting $(X_n, \si)$.}\label{figure:15}
\end{figure}
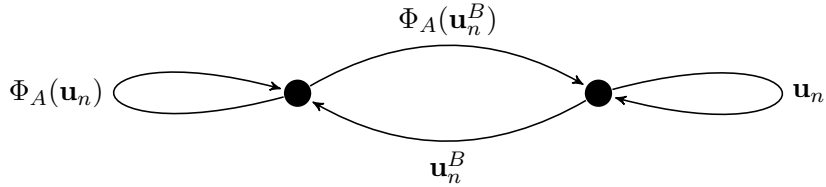
 Note that the   the blocks $\su_n^B, \Phi_A(\su_n), \Phi_A(\su_n^B)$ and $\su_n$ have the same length   $3\cdot 2^{n-1}$. Then   the topological entropy of $X_n$ is given by
(cf.~\cite{Lind_Marcus_1995})
\begin{equation}\label{eq:top-entropy}
h(X_n)=\frac{\log 2}{\log (2^{n-1}3)}\ge \frac{1}{n+1}.
\end{equation}
One can verify that the projection $\set{((d_i))_\beta: (d_i)\in X_n}$ is a graph-directed set satisfying the open set condition (cf.~\cite{Mauldin_Williams_1988}). Then by (\ref{eq:top-entropy}) it follows that 
\[
\dim_H\U_\beta= \frac{h(X_n)}{\log\beta}\ge \frac{1}{(n+1)  \log \beta}.
\]
This together with (\ref{eq-hatbetan}) implies that $\dim_H\us_\beta>0$ for any $\beta>\beta_c$. 
\end{proof}

\section{Open questions}\label{sec:final remark}
Note that $\U_\beta\subseteq \u_\beta$ for any $\beta\in(1,2)$. Furthermore, by Proposition \ref{lem:22} the two sets $\U_\beta$ and $\u_\beta$ coincide if $\beta\in(1,3/2]$ or $\beta$ is a multinacci number. Then it is natural to investigate the difference between $\u_\beta$ and $\U_\beta$ for other $\beta$s.  
 
\begin{question}
Can we describe the set of $\beta\in(3/2,2)$ for which $\u_\beta=\U_\beta$?
Is it true that $\dim_H\u_\beta=\dim_H\U_\beta$ for all $\beta\in(1,2)$? \end{question}

Observe that the set-valued map $\beta\mapsto\US_\beta$ is constant on each interval $(\beta_n, \beta_{n+1}]\subset(1,\beta_c)$. Motivated by the work of de Vries and Komornik \cite{DeVries_Komornik_2008} we ask the following question.
\begin{question}
Is it true that the set-valued map $\beta\mapsto \US_\beta$ is locally constant for Lebesgue almost every $\beta\in(1,2)$? If so, can we describe the bifurcation set \[\mathcal V=\set{\beta\in(1,2): \US_{\beta'}\ne\US_\beta\quad\textrm{for any }\beta'>\beta}?\]
\end{question} 

By Theorem \ref{thm:3} we know that $\dim_H\U_\beta=0$ for all $\beta\le \beta_c$. For $\beta>\beta_c$ the authors in \cite{Bro-Mon-Sid-04} calculated  the dimension of $\U_\beta$ only for $\beta$ being a multinacci number. In view of the work by Komornik et al.~\cite{Komornik-Kong-Li-17} and Alcaraz Barrera et al.~\cite{AlcarazBarrera-Baker-Kong-2016} we ask the following analogous question.
\begin{question}
Can we give a uniform formula for the Hausdorff dimension of $\U_\beta$ for $\beta>\beta_c$? Is it true that the entropy function $\beta\mapsto h(\US_\beta)$ is a Devil's staircase, where $h(\US_\beta)$ denotes the topological entropy of $\US_\beta$? If so, can we describe the bifurcation set 
\[\mathcal B=\set{\beta\in(1,2): h(\US_{\beta'})\ne h(\US_\beta)\quad\textrm{for any }\beta'>\beta}?\]
\end{question} 

\section*{Acknowledgements}
The first author was supported by NSFC No.~11471124. The second author was supported by NSFC No.~11671147, 11571144 and Science and Technology Commission of Shanghai Municipality (STCSM)  No.~18dz2271000.


\end{document}